\newtheorem{theorem}{Theorem}[section]
\theoremstyle{plain}
\newtheorem{lemma}[theorem]{Lemma}
\newtheorem{proposition}[theorem]{Proposition}
\newtheorem{corollary}[theorem]{Corollary}
\newtheorem{definition}[theorem]{Definition}
\theoremstyle{definition}
\newtheorem{example}[theorem]{Example}
\theoremstyle{remark}
\newtheorem{remark}[theorem]{Remark}
\numberwithin{equation}{section}
\newcommand{\A}{\mathbb A}
\newcommand{\N}{\mathbb N}
\newcommand{\Q}{\mathbb Q}
\newcommand{\Z}{\mathbb Z}
\newcommand{\U}{\mathbf U}
\newcommand{\var}{\varepsilon}
\newcommand{\Qv}{{\Q(v)}}
\newcommand{\Qvp}{{\Q(v)^\pi}}
\newcommand{\Qqp}{{\Q(q)^\pi}}
\newcommand{\Qp}{{\mathbb{Q}^\pi}}
\newcommand{\Zp}{{\Z^\pi}}
\newcommand{\Zpvvi}{{\Zp[v^{\pm 1}]}}
\newcommand{\Zpv}{{\Zp[v]}}
\newcommand{\cB}{{\mathcal{B}}}
\newcommand{\cL}{{\mathcal{L}}}
\newcommand{\barmap}{\overline{\phantom{x}}}
\newcommand{\height}{{\operatorname{ht}}}
\newcommand{\sgn}{{\operatorname{sgn}}}
\newcommand{\catO}{{\mathcal{O}}}
\newcommand{\zero}{{\bar{0}}}
\newcommand{\one}{{\bar{1}}}
\newcommand{\tK}{{\tilde{K}}}
\newcommand{\tJ}{{\tilde{J}}}
\newcommand{\te}{{\tilde{e}}}
\newcommand{\tf}{{\tilde{f}}}
\newcommand{\set}[1]{\left\{#1\right\}}
\newcommand{\parens}[1]{\left(#1\right)}
\newcommand{\ang}[1]{\left\langle#1\right\rangle}
\newcommand{\bra}[1]{\left[#1\right]}
\newcommand{\bbinom}[2]{\begin{bmatrix}#1 \\ #2\end{bmatrix}}
\renewcommand{\bar}[1]{\overline{#1}}
\newcommand{\bt}{{\mathbf t}}
\newcommand{\tT}{{\tilde T}}
\newcommand{\ff}{{\bf f}}
\newcommand{\fint}{{{}_\A\ff}}
\newcommand{\UU}{{\mathbf U}}
\newcommand{\hatU}{{\widehat \UU}}
\newcommand{\Up}{\UU^+}
\newcommand{\Um}{\UU^-}
\newcommand{\Uz}{\UU^0}
\newcommand{\Uint}{ { {}_\A\! \UU } }
\newcommand{\dotU}{{\dot{\mathbf U}}}
\newcommand{\dotUint}{ { {}_\A\! \dotU} }
\newcommand{\Tw}{{\mathfrak X}}
\begin{document}
\title{Quantum supergroups IV. The modified form}

\author[Clark]{Sean Clark}
\address{Department of Mathematics, University of Virginia, Charlottesville, VA 22904}
\email{
sic5ag@virginia.edu
}
\date{}
\keywords{}
\subjclass{}

\begin{abstract}
We construct a canonical basis for a class of tensor product modules of a quantum covering group
associated to an Kac-Moody Lie superalgebra, and use these bases to construct a canonical basis for 
the modified form of a quantum covering group.
\end{abstract}

\maketitle
\setcounter{tocdepth}{1}

\tableofcontents
\section{Introduction}

\subsection{}
The modified form of the Drinfeld-Jimbo quantum group $\U$ is a non-unital algebra 
$\dotU$ which was first defined in type $A$ by 
Beilinson-Lusztig-MacPherson \cite{BLM}, and generalized to other types by Lusztig
\cite{L92}. 
The algebra $\dotU$ can be thought of as a version of $\U$ optimized for the study of 
weight $\UU$ modules, which are also $\dotU$-modules.
The modified form admits 
interesting higher-order structure that cannot be
constructed on the whole quantum group. 
In particular, the Lusztig-Kashiwara canonical basis of the half-quantum group 
$\UU^-$ \cite{L90,K91} can be extended to a canonical basis of $\dotU$ \cite{L92,K93},
and moreover $\dotU$ admits a categorification \cite{KL,Web13}.

Together with Hill and Wang \cite{CW,CHW1,CHW2}, we defined and systematically 
developed the theory of covering quantum groups
associated to an anisotropic Kac-Moody superalgebra. These algebras, whose
definition is inspired by the results in \cite{CW,HW}, combine the 
structure and representation theories of a classical quantum group and 
a quantum supergroup by using a ``half parameter'' $\pi$,
satisfying $\pi^2=1$, in the place of super signs;
one recovers the classical quantum group under the specialization $\pi=1$,
and the quantum supergroup under the specialization $\pi=-1$. 
Moreover, the half-quantum covering group and the 
integrable modules  admit (compatible) canonical bases 
which both generalize the Lusztig-Kashiwara canonical bases  and 
specialize to canonical bases for the corresponding
superalgebra upon setting $\pi=-1$.

The modified form of the rank one quantum covering group appeared in \cite{CW}, 
and there it is shown that this modified form retains many of the 
nice characteristics of its non-covering counterpart. In particular, it 
has a canonical basis which is structurally similar to the canonical basis 
of quantum $\mathfrak{sl}_2$, as well as a bilinear form, 
with respect to which the Chevalley generators are essentially adjoint.
A definition of the modified form in higher rank can be found in \cite{CFLW}.

Like their non-covering counterparts, quantum covering groups 
have strong ties to results in categorification.
Indeed, the quiver Hecke superalgebras
introduced by Kang, Kashiwara, and Tsuchioka \cite{KKT} (also see \cite{W,EKL})
provide an alternate categorification of the half-quantum group.
They also provide a categorification of certain types of 
half-quantum supergroups
as shown by Hill and Wang \cite{HW}.
These alternate categorifications glue together to a categorification
of the half-quantum covering group.
Moreover, Kang, Kashiwara, and Oh \cite{KKO} have shown 
that the cyclotomic quiver Hecke superalgebras categorify
the integrable modules (for rank one, also see \cite{EKL,HW}).
The modified form of the rank one quantum covering group 
was recently shown to admit a (super)categorification by Ellis and Lauda \cite{EL}. 

\subsection{}
The goal of this paper is to provide an intrinsic construction of a canonical
basis for the modified form of a quantum covering group, 
as well as develop auxiliary structure, such as a bilinear form, 
using parallel techniques to those
utilized in Lusztig's book \cite[Part IV]{L93}.

Using the structure and representation theory of quantum covering groups, 
we deduce a number of structural properties for the modified form
which parallel the structure theory of modified quantum groups.
Following Lusztig's construction of a bar-involution on tensor products, 
we use (a variant of) the quasi-$\mathcal R$-matrix defined in \cite{CHW1} 
to exhibit a bar-involution on a family of modules $\mathcal F$. 
This bar-involution can be used to 
construct a canonical basis for the modules which can be ``glued together''
to yield a canonical basis $\dot\cB$ for the modified form. This canonical
basis generalizes the canonical basis of Lusztig for the modified quantum groups, 
in the sense that $\dot\cB|_{\pi=1}$ is precisely Lusztig's canonical basis.
On the other hand, $\dot\cB|_{\pi=-1}$ furnishes a canonical basis for the modified
form of a Kac-Moody quantum supergroup of anisotropic type.

A surprising connection between
the modified forms of non-super and super quantum groups is that they can be realized
as rational forms of a complex algebra. This identification can be succintly realized
as an automorphism of the complexification of the modified form of the
quantum covering group. This map and the maps it induces on 
both the quantum covering group and the half-quantum covering group
are called twistors. 
These maps were defined and studied in \cite{CFLW} 
(extending work in rank one by Fan and Li \cite{FL}).

The twistor is remarkably useful for deducing properties of quantum covering groups
from the corresponding properties of quantum groups. 
However, the definition of the twistor
relies on a choice of total order on the simple roots, 
and its highly non-canonical nature
is not suitable for deducing the existence of
 a canonical basis of the quantum covering group. 
Nonetheless, it happens that the twistor matches 
the canonical basis of $\U^-$ with itself up to a factor of an integral 
power of $\sqrt{-1}$; see
\cite[Theorem 3.6]{CFLW}. We will build on the results in {\em loc. cit.}
to extend the notion of twistor to (complexifications of) the modules in $\mathcal F$.
These module twistors preserve the canonical bases (up to an integral power of $\sqrt{-1}$) as well.
As a consequence, we show that the canonical basis $\dot\cB$ of $\dotU$
is also preserved (up to scalar multiple) by the twistor.

We will also define a bilinear form on the modified quantum covering group
which generalizes the bilinear form on the half-quantum covering group. 
This bilinear form is shown to exhibit a number of
nice qualities; namely, it is invariant under some of the 
important automorphisms of the quantum covering group,
and it can be viewed as a limit of bilinear forms on the tensor product modules.
We note that in rank one, the bilinear form we shall define differs from the 
one defined in rank one in \cite{CW}, but they are intimately related; see Proposition \ref{prop:altdotbilform}. 
The bilinear form in {\em loc. cit.} is inadequate from the point of view of categorification (see
Remark \ref{remark:dagbilform}), but we will show this deficiency can be explained
by utilizing an additional symmetry of the quantum covering group to relate these forms.

Given the Ellis-Lauda categorification in rank one \cite{EL}
and the construction of odd knot invariants associated to finite type
quantum covering groups by Mikhaylov and Witten \cite{MW}, we expect that
quantum covering groups and their modified form will be an important setting
for obtaining further categorifications and categorical odd knot invariants.
We believe that the bilinear form in this paper will be
useful for constructing categorifications of modified quantum covering groups of higher rank.
However, we note that, in rank one, our bilinear form differs from the one proposed by Ellis
and Lauda, which appears to be a skew-version of the bilinear form defined in
Proposition \ref{prop:altdotbilform}. Nonetheless, the canonical bases
we construct agree with the earlier construction in \cite{CW},
which appear as indecomposable 1-morphisms in the Ellis-Lauda categorification.

We also note that there exist canonical bases of Lie superalgebras outside
of anisotropic Kac-Moody type. In \cite{CHW3}, canonical bases are constructed
for $\mathfrak{gl}(n|1)$ and $\mathfrak{osp}(2|2n)$ by a quantum shuffle approach. 
This technique tells us very little
about the relation of the canonical basis and the representation theory,
but it is expected that for polynomial modules,
the canonical basis will descend to a basis of the module (cf. \cite[\S 8]{CHW3}).
If this is true, then the canonical bases of {\em loc. cit.} should
have an extension to a suitable modified form (say, with
idempotents corresponding to polynomial weights). There is some evidence that
this should be possible; for example, see El Turkey and Kujawa's presentation 
of Schur superalgebras \cite{ETK}.

\subsection{}

The paper is structured in the following way. 
In Section 2, we will recall the
definitions and basic results about the quantum covering groups. We will
also prove some new technical results we will need in later sections.

In Section 3, we define the modified quantum group $\dotU$ and study its structure in detail.
In particular, we show that $\dotU$ is approximated by certain
tensor product modules in a way analogous to the approximation of $\U^-$ by simple
highest weight modules. 

In Section 4, we construct canonical bases for the family of modules $\mathcal F$.
The main result of this section is the construction of a canonical basis of $\dotU$
which is simultaneously compatible with the canonical bases of the $\mathcal F$.
This basis is a generalization of the canonical basis of $\U^-$ which was constructed in \cite{CHW2}.

In Section 5, we present a construction of a bilinear form on $\dotU$
with nice properties. We further show that this bilinear form can be viewed
as a limit of suitable bilinear forms on modules, which has the consequence
of showing that the canonical basis of $\dotU$ is $\pi$-almost-orthonormal.

Finally, in Section 6 we discuss some extensions of the results in \cite{CFLW}.
In particular, we show that any module in $\mathcal F$ admits a twistor map. 
We then show that twistors preserve the canonical bases of $\dotU$ and of
modules in $\mathcal F$ up to a power of $\sqrt{-1}$.
\vspace{1em}

\noindent\textbf{Acknowledgements:}
I am indebted to Weiqiang Wang, 
as this paper would be non-existent without his guidance.
I would like to thank David Hill, Yiqiang Li, and Zhaobing Fan for
the fruitful collaborations on related projects. I thank
Aaron Lauda for many helpful comments and stimulating conversations;
in particular, for pointing out that the bilinear form
in \cite{CW} has a defect with respect to categorification.

\section{The quantum covering groups}

In this section, we will recall the essential definitions 
and results from earlier papers.
We caution the reader that while the results are largely restatements of
results in \cite{CHW1,CHW2},
several of our conventions differ from those in {\em loc. cit.}
We will also present some new results
which will be of use later on; specifically,
Lemmas \ref{lem:daggerU}, \ref{lem:omega twist cyclicity}, 
and \ref{lem:extra CB props}.

\subsection{The Cartan datum}\label{subsec:datum}

\begin{definition}
  \label{definition:scd}
A {\em Cartan datum} is a pair $(I,\cdot)$ consisting of a finite
set $I$ and a $\Z$-valued symmetric bilinear form $\nu,\nu'\mapsto \nu\cdot\nu'$
on the free abelian group $\Z[I]$  satisfying
\begin{enumerate}
 \item[(a)] $d_i=\frac{i\cdot i}{2}\in \Z_{>0}, \quad \forall i\in I$;

  \item[(b)]
$a_{ij}=2\frac{i\cdot j}{i\cdot i}\in \Z_{\leq 0}$,  for $i\neq j$ in $I$.
\end{enumerate}
A Cartan datum is called an {\em anisotropic super Cartan datum} (or anisotropic datum, in brief) if
there is a partition $I=I_\zero\coprod I_\one$ which satisfies the {\em anisotropic} condition
\begin{enumerate}
        \item[(c)] $2\frac{i\cdot j}{i\cdot i} \in 2\Z$ if $i\in I_\one$.
\end{enumerate}
The $i\in I_\zero$ are called even, $i\in I_\one$ are called odd. We
define a parity function $p:I\rightarrow\set{0,1}$ so that $i\in
I_{\overline{ p(i)}}$.
An anisotropic datum is called  {\em bar-consistent} 
if it additionally satisfies  
\begin{enumerate}
        \item[(d)]  $d_i\equiv p(i) \mod 2, \quad \forall i\in I.$
\end{enumerate} 
\end{definition}
We will always assume an anisotropic Cartan datum satisfies $I_\one\neq\emptyset$ and is bar-consistent in this paper.
We note that a bar-consistent anisotropic datum satisfies
\begin{equation}  \label{eq:even}
i\cdot j\in 2\Z \quad \text{ for all }i,j\in I.
\end{equation}

Let $\nu=\sum_{i\in I} \nu_i i\in \Z[I]$.
We extend the parity function on $I$ to the $\Z$-module homomorphism
$p:\Z[I]\rightarrow \Z_2$ by \[p(\nu)=\sum_{i\in I} \nu_i p(i).\] 
We define the height function $\height$ on $\Z[I]$ 
by letting \[\height(\nu)=\sum_{i\in I} \nu_i.\]
We define the notation 
\begin{equation}\label{eq:deftilderoot}
\tilde{\nu}=\sum d_i\nu_i i.
\end{equation}

A {\em root datum} associated to a anisotropic datum $(I,\cdot)$
consists of
\begin{enumerate}
\item[(a)]
two finitely generated free abelian groups $Y$, $X$ and a
perfect bilinear pairing $\ang{\cdot, \cdot}:Y\times X\rightarrow \Z$;

\item[(b)]
an embedding $I\subset X$ ($i\mapsto i'$) and an embedding $I\subset
Y$ ($i\mapsto i$) satisfying

\item[(c)] $\ang{i,j'}=\frac{2 i\cdot j}{i\cdot i}$ for all $i,j\in I$.
\end{enumerate}

If the image of the imbedding $I\subset X$
(respectively, the image of the imbedding $I\subset Y$) is linearly
independent in $X$ (respectively, in $Y$), then we
say that the root datum is $X$-regular (resp. $Y$-regular).
In this paper, we will assume the root datum is always $Y$-regular.

\begin{remark}
For many results, the $Y$-regular condition can be relaxed using 
similar arguments to those in \cite[Part IV]{L93}.
\end{remark}

If $V$ is a vector space graded by $\Z[I]$, $X$, or $Y$, we will use the weight notation
$|x|=\mu$ if $x\in V_\mu$. If $V$ is a $\Z_2$-graded vector space, we will
use the parity notation $p(x)=a$ if $x\in V_a$.

\subsection{Parameters}

Let $v$ be a formal parameter and let $\pi$ be an indeterminate
such that 
$$
\pi^2=1.
$$ 
For a commutative ring $R$ with $1$, we will form a new ring
$R^\pi=R[\pi]/(\pi^2-1)$. Given an $R^\pi$-module (or algebra) $M$,
the {\em specialization of $M$ at $\pi=\pm 1$} means the $R$-module
(or algebra) $M|_{\pi=\pm 1} =R_{\pm}\otimes_{R^\pi} M$, 
where $R_\pm =R$ is viewed
as a $R^\pi$-module on which $\pi$ acts as $\pm 1$.

Assume 2 is invertible in $R$; i.e. $\frac{1}{2}\in R$. 
We define 
\begin{equation}\label{eq:pi idempotent}
\var_{+}=\frac{1+ \pi}{2},\qquad\var_{-}=\frac{1- \pi}{2},
\end{equation} 
and note that $R^\pi=R\var_+\oplus R\var_-$.
In particular, since $\pi \var_{\pm}=\pm \var_{\pm}$ 
for an $R^\pi$-module $M$, we see that 
\[M|_{\pi=\pm 1}\cong \var_{\pm} M.\]

The principal rings of concern in this paper are $\Qvp$ and $\A=\Zpvvi$. For $k \in \Z_{\ge 0}$ and $n\in \Z$,
we use a $(v,\pi)$-variant of quantum integers, quantum factorial and quantum binomial coefficients:

\begin{equation}
 \label{eq:nvpi}
\begin{split}
\bra{n}_{v,\pi} & 
=\frac{(\pi v)^n-v^{-n}}{\pi v-v^{-1}}  \in \A, 
  \\
\bra{n}_{v,\pi}^!  &= \prod_{l=1}^n \bra{l}_{v,\pi}   \in \A, 
 \\
\bbinom{n}{k}_{v,\pi}
&=\frac{\prod_{l=n-k+1}^n  \big( (\pi v)^{l} -v^{-l} \big)}{\prod_{m=1}^k \big( (\pi v)^{m}- v^{-m} \big)}  \in \A. 
\end{split}
\end{equation}

We will use the notation 
$$
v_i=v^{d_i}, \quad \pi_i=\pi^{d_i}, \quad \text{ for } i\in I.
$$
More generally, for $\nu=\sum \nu_i i$, we set
\[v_\nu=\prod_{i\in I} v_i^{\nu_i},\quad \pi_\nu=\prod_{i\in I} \pi_i^{\nu_i}.\]

There are two important automorphisms of $\Qvp$ and $\A$ for our purposes. 
The {\em bar involution} on $\Qvp$
is the $\Q^\pi$-algebra automorphism defined by $\bar{f(v,\pi)}=f(\pi v^{-1},\pi)$ for $f(v,\pi)\in \Qvp$.
The {\em dagger involution} on $\Qvp$ is the $\Q^\pi$-algebra automorphism defined by 
$f(v,\pi)^\dagger=f(\pi v,\pi)$ for $f(v,\pi)\in \Qvp$.
We note that both involutions restict to $\Zp$-algebra automorphisms of $\A$.

Observe that the $(v,\pi)$-integers are bar-invariant, but are not $\dagger$-invariant in general;
to wit,
\[[k]_{v,\pi}^\dagger=\pi^{k-1}[k]_{v,\pi},\quad
([k]_{v,\pi}^{!})^\dagger=\pi^{\binom{k}{2}}[k]_{v,\pi}^!,\quad
\bbinom{n}{k}_{v,\pi}^\dagger=\pi^{k(n-k)} \bbinom{n}{k}_{v,\pi}.
\]
In particular, we note that $\bbinom{n}{k}_{v,\pi}$ is $\dagger$-invariant
if and only if $n$ is odd or $k$ is even.
\begin{remark}
We note that $[n]^\dagger=\frac{v^n - (\pi v)^{-n}}{v-\pi v^{-1}}$ 
could well be the definition of $(v,\pi)$-integers, and we regard it
as an alternate convention.
\end{remark}

\subsection{The quantum covering groups}\label{subsec:cqg}

For the rest of the paper, we fix an anisotropic datum $(I,\cdot)$ 
and associated root datum $(Y,X,\ang{\cdot,\cdot})$.
We recall some definitions from \cite{CHW1}.

\begin{definition}\cite{CHW1}\label{def:hcqg}
The half-quantum covering group $\ff$ associated to the anisotropic datum 
$(I,\cdot)$ is the $\Z[I]$-graded $\Qvp$-algebra
on the generators $\theta_i$ for $i\in I$ with $|\theta_i|=i$, 
satisfying the relations
\begin{equation}\label{eq:thetaserrerel}
\sum_{k=0}^{b_{ij}} (-1)^k\pi^{\binom{k}{2}p(i)+kp(i)p(j)}
\bbinom{b_{ij}}{k}_{v_i,\pi_i}  \theta_i^{b_{ij}-k}\theta_j\theta_i^k=0 
\;\; (i\neq j),
\end{equation}
\end{definition}

We define the divided powers
\[\theta_i^{(n)}=\theta_i^{n}/\bra{n}_{v_i,\pi_i}^!.\]
Let $\fint$ be the $\A$-algebra generated by $\theta_i^{(n)}$ for various $i\in I$, $n\in \N$.

\begin{definition} \cite{CHW1}
 \label{definition:cqg}
The quantum covering group 
$\UU$ associated to the root datum $((I,\cdot),\ Y,\ X,\ \ang{\cdot,\cdot})$ is the $\Q(v)^{\pi}$-algebra with generators
$E_i, F_i$, $K_\mu$, and $J_\mu$, for  $i\in I$ and $\mu\in Y$, subject to the 
relations:
\begin{equation}\label{eq:JKrels}
J_\mu J_\nu=J_{\mu+\nu},\quad K_\mu K_\nu=K_{\mu+\nu},\quad K_0=J_0=J_\nu^2=1,\quad
J_\mu K_\nu=K_\nu J_\mu,
\end{equation} 
\begin{equation}\label{eq:Jweightrels}
J_\mu E_i=\pi^{\ang{\mu,i'}} E_i J_\mu,\quad J_\mu F_i=\pi^{-\ang{\mu,i'}} F_i J_\mu,
\end{equation} 
\begin{equation}\label{eq:Kweightrels}
K_\mu E_i=v^{\ang{\mu,i'}} E_i K_\mu,\quad K_\mu F_i=v^{-\ang{\mu,i'}} F_i K_\mu,
\end{equation} 
\begin{equation}\label{eq:commutatorrelation}
E_iF_j-\pi^{p(i)p(j)}F_jE_i=\delta_{ij}\frac{J_{d_i i}K_{d_i i}-K_{-d_i i}}{\pi_i v_i- v_i^{-1}},
\end{equation}  
\begin{equation}\label{eq:Eserrerel}
\sum_{k=0}^{b_{ij}} (-1)^k\pi^{\binom{k}{2}p(i)+kp(i)p(j)}\bbinom{b_{ij}}{k}_{v_i,\pi_i} 
 E_i^{b_{ij}-k}E_jE_i^k=0 \;\; (i\neq j),
\end{equation} 
\begin{equation}\label{eq:Fserrerel}
\sum_{k=0}^{b_{ij}} (-1)^k\pi^{\binom{k}{2}p(i)+kp(i)p(j)}\bbinom{b_{ij}}{k}_{v_i,\pi_i} 
 F_i^{b_{ij}-k}F_jF_i^k=0 \;\; (i\neq j),
\end{equation} 
for $i,j\in I$ and $\mu,\nu\in Y$. 
\end{definition}
The algebras $\UU$ and $\ff$ are related in the following way. 
Let $\Um$ be the subalgebra generated by $F_i$ with $i\in I$,
$\Up$ be the subalgebra generated by $E_i$ with $i\in I$,
and $\Uz$ be the subalgebra generated by $K_\nu$ and $J_\nu$ for
$\nu\in Y$. There is an isomorphisms $\ff\rightarrow\Um$ 
(resp. $\ff\rightarrow \Up$) defined by $\theta_i\mapsto \theta_i^-=F_i$
(resp. $\theta_i\mapsto \theta_i^+=E_i$). 

\begin{proposition}\label{prop:UUtriang}
There is a triangular decomposition
\[\UU\cong \Um\otimes\Uz\otimes\Up\cong \Up\otimes \Uz\otimes \Um.\]
\end{proposition}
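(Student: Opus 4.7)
The plan is to establish the first isomorphism $\UU \cong \Um \otimes \Uz \otimes \Up$; the second then follows by the symmetric argument, or by applying a suitable anti-automorphism of $\UU$. Surjectivity of the multiplication map $m : \Um \otimes \Uz \otimes \Up \to \UU$ is immediate from the relations: any monomial in the generators $E_i, F_i, K_\mu, J_\mu$ lies in the image of $m$, since one iteratively applies the commutator relation \eqref{eq:commutatorrelation} to move each $E_i$ past any $F_j$ to its left (generating a $\Uz$-term when $i=j$), together with the weight relations \eqref{eq:Jweightrels}--\eqref{eq:Kweightrels} to collect the $K_\mu, J_\mu$ in the middle. An induction on total length of the monomial then shows every element of $\UU$ is in the image of $m$.

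For injectivity I would first introduce the ``pre-Serre'' algebra $\UUpr$, defined by the generators $E_i, F_i, K_\mu, J_\mu$ and the relations \eqref{eq:JKrels}--\eqref{eq:commutatorrelation} only. On the tensor product $\ffpr \otimes \Uz \otimes \ffpr$, where $\ffpr$ denotes the free $\Qvp$-algebra on symbols $\theta_i$, I would define an algebra structure dictated by precisely the straightening rules just described, verify associativity, and exhibit a two-sided inverse to the canonical surjection $\ffpr \otimes \Uz \otimes \ffpr \twoheadrightarrow \UUpr$; this yields the triangular decomposition of $\UUpr$. Equivalently, one can establish the decomposition by exhibiting a faithful action of $\UUpr$ on the direct sum $\bigoplus_\lambda M'(\lambda)$ of Verma-type modules, one for each character $\lambda$ of $\Uz$, each free of rank one over $\ffpr$. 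Writing $I^\pm \subset \ffpr$ for the two-sided ideals generated by the $(v_i,\pi_i)$-quantum Serre polynomials of \eqref{eq:Eserrerel}--\eqref{eq:Fserrerel}, the content of the proposition then reduces to checking that the kernel of $\UUpr \twoheadrightarrow \UU$ decomposes cleanly as $(I^- \otimes \Uz \otimes \ffpr) + (\ffpr \otimes \Uz \otimes I^+)$ under the triangular decomposition of $\UUpr$.

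The main obstacle is precisely this bilateral splitting: one must show that imposing Serre on the $E$-side introduces no unexpected relations among the $F$'s, and conversely. In the classical setting this is secured by the non-degeneracy of Lusztig's bilinear pairing between $\Up$ and $\Um$, equivalently via a PBW-type basis, and the covering analogues of both of these were developed in \cite{CHW1}. Consequently, the proposition will follow by invoking either the non-degenerate pairing or the PBW basis from \emph{loc. cit.}; in practice, since the triangular decomposition is itself recorded there, the cleanest exposition is simply to cite the corresponding result in \cite{CHW1}.
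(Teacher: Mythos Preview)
Your proposal is correct and, in its conclusion, matches the paper exactly: the paper does not prove this proposition at all but simply states it as a result recalled from \cite{CHW1}, which is precisely what you recommend doing in your final sentence. Your additional sketch of the standard Lusztig-style argument (triangular decomposition of the pre-Serre algebra $\UUpr$, followed by the clean splitting of the Serre ideal via the nondegenerate bilinear form on $\ff$) is accurate and is indeed how the result is established in \cite{CHW1}, but the paper itself offers no argument beyond the implicit citation.
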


We define the divided powers
\[E_i^{(n)}=(\theta_i^{(n)})^+,\quad F_i^{(n)}=(\theta_i^{(n)})^-,\]
and set $\Uint^\pm=(\fint)^\pm$.
We will also use the shorthand notation
\[\tJ_\nu=J_{\tilde \nu},\quad \tK_{\nu}=K_{\tilde \nu}.\]
Then for $\nu\in Y$, we also have the $\nu$-integers and $\nu$-binomial coefficients
\[[\nu;n]=\frac{\pi_\nu^n v_\nu^n\tJ_\nu \tK_\nu-\tK_\nu^{-1}v_\nu^{-n}}{\pi_\nu v_\nu-v_\nu^{-1}},\quad \bbinom{\nu; n}{k}=\frac{\prod_{s=1}^k [\nu; n+1-k]}{[k]_{v_\nu,\pi_\nu}^!}.\]
We let $\Uint$ be the $\A$-subalgebra of $\UU$ generated by $E_i^{(n)}$, $F_i^{(n)}$, $J_\nu$, $K_\nu$,
and $\bbinom{i; n}{k}$ for $i\in I$, $\nu\in Y$, $n\geq a\in \N$.

The algebra $\UU$ has a number of important gradings.
We endow $\UU$ with a $\Z[I]$-grading by setting 
\begin{equation}
|E_i|=i,\quad |F_i|=-i,\quad |J_\mu|=|K_\mu|=0,
\end{equation}
and also endow $\UU$ with a $\Z_2$-grading by setting 
\begin{equation}
p(E_i)=p(F_i)=p(i),\quad p(J_\mu)=p(K_\mu)=0.
\end{equation}
We set $\UU_\nu=\set{x\in \UU: |x|=\nu}$.
Note that $p(x)=p(\nu)$ for all $x\in \UU_\nu$.

The algebra $\UU$ has a number of important automorphisms, which we will now recall.
There is a $\Qvp$-algebra automorphism $\omega:\UU\rightarrow\UU$
defined by
\begin{equation}\label{eq:omegadef}
\omega(E_i)=F_i,\quad
\omega(F_i)=\pi_i\tJ_iE_i,\quad
\omega(K_\nu)=K_{-\nu},\quad
\omega(J_\nu)=J_\nu.
\end{equation}
(Note that $\omega$ is not an involution, and our $\omega$ corresponds to $\omega^{-1}$ in the notation of \cite{CHW1}).
For any $\UU$-module $M$, we let ${}^\omega M$ be the $\omega$-twisted
module; that is, the space $M$ equipped with a new action $\cdot$ defined by 
$u\cdot m=\omega(u)m$. We also extend this notation to powers of $\omega$.

There is also an important anti-automorphism of $\UU$. To wit, there is a $\Qvp$-linear map $\rho:\UU\rightarrow\UU$
such that
\begin{equation}\label{eq:rhodef}
\rho(E_i)=\pi_i\tJ_iE_i,\quad
\rho(F_i)=F_i,\quad
\rho(K_\nu)=K_{-\nu},\quad
\rho(J_\nu)=J_\nu,
\end{equation}
and satisfying
\[\rho(xy)=\rho(y)\rho(x).\]

Finally, the bar-involution on $\UU$ is the $\Q^\pi$-algebra automorphism defined by 
\begin{equation}\label{eq:bardef}
\bar E_i=E_i,\quad
\bar F_i=F_i,\quad
\bar K_\nu=J_\nu K_{-\nu},\quad
\bar J_\nu=J_\nu,\quad
\bar v= \pi v^{-1}.
\end{equation}
The maps $\omega$, $\rho$, and $\bar{\phantom{x}}$ (or variations thereof) were defined in \cite{CHW1}. 

We also have a $\Q^\pi$-linear automorphism $\dagger$ on $\UU$ extending 
$\dagger:\Qvp\rightarrow \Qvp$.
\begin{lemma}\label{lem:daggerU}
There exists a $\Q^\pi$-algebra automorphism $\dagger:\U\rightarrow \U$, denoted by
$\bullet\mapsto \bullet^\dagger$, such that
\begin{equation}\label{eq:daggerdef}
E_i^\dagger=\pi_i\tJ_iE_i,\quad
F_i^\dagger=F_i,\quad
K_\nu^\dagger=J_\nu K_{\nu},\quad
J_\nu^\dagger=J_\nu,\quad
v^\dagger= \pi v.
\end{equation}
\end{lemma}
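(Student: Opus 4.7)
I would show that the prescribed map on generators extends the scalar automorphism $\dagger$ on $\Qvp$ to a $\Q^\pi$-algebra endomorphism of $\U$ by verifying each defining relation \eqref{eq:JKrels}--\eqref{eq:Fserrerel} is preserved, and then conclude bijectivity by checking $\dagger^2 = \mathrm{id}$ on generators. The identity $\bbinom{n}{k}_{v_i,\pi_i}^\dagger = \pi_i^{k(n-k)} \bbinom{n}{k}_{v_i,\pi_i}$ recorded just before the lemma will be essential for tracking scalar changes throughout.

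The relations \eqref{eq:JKrels}, \eqref{eq:Jweightrels}, and \eqref{eq:Kweightrels} reduce to direct substitutions once one uses $\pi^2 = J_\nu^2 = 1$ and the centrality of $\pi$. For the commutator relation \eqref{eq:commutatorrelation}, the key observation is that $\tJ_i$ commutes with every $E_j$ and $F_j$: indeed $\ang{\tilde{i},\, j'} = d_i\ang{i,\, j'} = i \cdot j \in 2\Z$ by \eqref{eq:even}. This lets one pull a factor $\pi_i \tJ_i$ out of the $\dagger$-image of the left-hand side, and a short manipulation of the right-hand side (multiplying numerator and denominator by $\pi_i$, using $\tJ_i^2 = \pi_i^2 = 1$) matches the two.

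The Serre relations \eqref{eq:Eserrerel}--\eqref{eq:Fserrerel} are the main obstacle. Applying $\dagger$ to a term in \eqref{eq:Eserrerel}, one uses that $\tJ_i$ commutes with $E_j$ (as above) and with $E_i$ (since $\ang{\tilde{i},\, i'} = 2d_i \in 2\Z$) to pull out a $k$-independent prefactor $\pi_i^{b_{ij}} \tJ_i^{b_{ij}} \pi_j \tJ_j$. Each summand then carries an extra factor $\pi_i^{k(b_{ij}-k)} = \pi^{d_i k(b_{ij}-k)}$ coming from $\bbinom{b_{ij}}{k}_{v_i,\pi_i}^\dagger$. The crux is to show this extra factor equals $1$ for every $0 \leq k \leq b_{ij}$: if $d_i$ is even this is trivial, while if $d_i$ is odd, bar-consistency forces $p(i) = 1$, so the anisotropic condition makes $a_{ij}$ (hence $b_{ij} = 1 - a_{ij}$) odd, and exactly one of $k$ and $b_{ij}-k$ is even, making $k(b_{ij}-k)$ even. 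Thus the $\dagger$-image of each Serre relation is the $k$-independent prefactor times the original Serre sum, and so vanishes; the $F$-Serre relation is handled symmetrically.

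Finally, $\dagger^2$ fixes $v$, $F_i$, and $J_\nu$ directly, maps $K_\nu \mapsto J_\nu K_\nu \mapsto J_\nu^2 K_\nu = K_\nu$, and sends $E_i \mapsto \pi_i \tJ_i E_i \mapsto \pi_i^2 \tJ_i^2 E_i = E_i$, so $\dagger^2 = \mathrm{id}$ and $\dagger$ is an automorphism. I expect the parity computation in the Serre step to be the only delicate point, as it is the only place where bar-consistency and the anisotropic condition are both needed simultaneously.
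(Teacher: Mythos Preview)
Your proposal is correct and follows essentially the same approach as the paper: verify the defining relations on generators, with the Serre relations being the only nontrivial case, handled by observing that when the extra $\pi_i^{k(b_{ij}-k)}$ factor is not automatically $1$ (i.e.\ when $d_i$ is odd, equivalently $p(i)=1$ by bar-consistency), the anisotropic condition forces $b_{ij}$ odd so the binomials are $\dagger$-invariant. Your explicit check that $\dagger^2=\mathrm{id}$ to conclude bijectivity is a small addition the paper leaves implicit.
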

\begin{proof}
To see that $\dagger$ is a well-defined map, we may check that the images of the 
generators satisfy the defining relations. 
All of the relations are trivial to verify except
the Serre relations when $p(i)\neq 0$.
However, since $b_{ij}=1-a_{ij}$ is odd in this case, the binomial coefficients
are dagger-invariant. Then \eqref{eq:Fserrerel} is dagger-invariant, and the image of \eqref{eq:Eserrerel}
is proportional to itself (by a factor of $\pi^{b_{ij}p(i) + p(j)}\tJ_{b_{ij}i+j}$).
\end{proof}

\begin{remark}\label{remark:altdefquantgp}
One interpretation of the automorphism $\dagger$ is as follows. There is an algebra $\UU^\dagger$ 
with generators $E_i^\dagger, F_i^\dagger, J_\nu^\dagger, K_\nu^\dagger$ ($i\in I, \nu\in Y$),
and subject to \eqref{eq:JKrels}-\eqref{eq:Fserrerel} (with $E_i, F_i, J_\nu, K_\nu$ replaced by $E_i^\dagger, F_i^\dagger, J_\nu^\dagger, K_\nu^\dagger$)
except with \eqref{eq:commutatorrelation} replaced by
\[E_i^\dagger F_j^\dagger-\pi^{p(i)p(j)} F_j^\dagger E_i^\dagger=\delta_{ij}\frac{\tK_i^\dagger-\tJ_i^\dagger \tK_{-i}^\dagger}{v_i-\pi_i v_i^{-1}}.\]
The algebra $\UU^\dagger$ may be thought of as $\UU$ defined with respect to the alternate convention
of $(v,\pi)$-integers, and $\dagger$ defines a $\Q^\pi$-algebra isomorphism $\dagger:\UU\rightarrow \UU^\dagger$ defined by
\[E_i\mapsto E_i^\dagger,\quad F_i\mapsto F_i^\dagger,\quad K_\nu\mapsto J_\nu^\dagger K_\nu^\dagger,\quad J_\nu\mapsto J_\nu^\dagger,\quad v\mapsto \pi v.\]
This map will extend naturally to the modified form, but it is unclear what meaning $\dagger$ has in the categorification in \cite{EL}.
\end{remark}

\subsection{Hopf structure}\label{subsec:Hopf}

As in the case of classical quantized enveloping algebras,
we may endow $\UU$ with a Hopf covering algebra structure
in several ways (cf. \cite{CHW1} for details on the counit and antipode). 
As different coproducts have been considered in the papers \cite{CHW1,CHW2}
(see Remark \ref{rem:coproducts} below), we will describe 
some of the more natural choices for the coproducts. Their relationships
are discussed further in Section \ref{subsec:tensor dotU}.

We have homomorphisms 
\[\Delta_1,\Delta_2,\Delta_3,\Delta_4:\UU\rightarrow\UU\otimes \UU\]
such that
\begin{align*}
\Delta_1(E_i)&=E_i\otimes 1+ \tJ_i\tK_i\otimes E_i
&\Delta_1(F_i)&=F_i\otimes \tK_{-i}+1\otimes F_i\\
\Delta_2(E_i)&=E_i\otimes 1+ \tK_i\otimes E_i
&\Delta_2(F_i)&=F_i\otimes \tK_{-i}+\tJ_i\otimes F_i\\
\Delta_3(E_i)&=E_i\otimes \tK_{-i} + 1 \otimes E_i
&\Delta_3(F_i)&=F_i\otimes 1+\tJ_i\tK_i\otimes F_i\\
\Delta_4(E_i)&=E_i\otimes \tK_{-i}+ \tJ_i\otimes E_i
&\Delta_4(F_i)&=F_i\otimes 1+\tK_i\otimes F_i\\
\Delta_s(K_\mu)&=K_\mu \otimes K_\mu\text{ for }s=1,2,3,4.
&\Delta_s(J_\mu)&=J_\mu\otimes J_\mu\text{ for }s=1,2,3,4.
\end{align*}
It is elementary to verify that for each $s=1,2,3,4$, $\Delta_s(\Uint)\subset \Uint\otimes_{\A}\Uint$.
Given $\UU$-modules $M$ and $M'$, we denote by $M\otimes_s M'$ the space $M\otimes_{\Qqp} M'$
with the $\UU$-modules structure given by $\Delta_s$.

We recall from \cite[\S 3.1]{CHW1} the existence of a unique family of elements
$\Theta_{1,\nu}\in \UU^-_{\nu}\otimes \UU^+_{\nu}$ such 
that the element $\Theta_1=\sum_{\nu\in \N[I]}\Theta_{1,\nu}$ satisfies
\[\Delta_1(u)\Theta_1=\Theta_1\bar\Delta_1(u)\]
for all $u\in \UU$ 
(equality in a suitable completion of $\UU\otimes \UU$).
\begin{proposition}\label{prop:otherRmats}
There exists unique families of
elements 
\[\Theta_{2,\nu}\in \UU^-_{\nu}\otimes (\tJ_\nu\UU^+_{\nu})\]
\[\Theta_{3,\nu}\in \UU^+_{\nu}\otimes \UU^-_{\nu}\]
\[\Theta_{4,\nu}\in (\tJ_\nu\UU^+)_{\nu}\otimes \UU^-_{\nu}\]
for $\nu\in \N[I]$
such that the element 
$\Theta_s=\sum_{\nu \in \N[I]} \Theta_{s,\nu}$
for $s=2,3,4$ satisfies
\[\Delta_s(u)\Theta_s=\Theta_s\bar\Delta_s(u)\]
for all $u\in \UU$
(with equality in a suitable completion of $\UU\otimes \UU$).
Moreover, $\Theta_s\bar\Theta_s=1$.
\end{proposition}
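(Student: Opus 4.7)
The plan is to construct each $\Theta_s$, $s=2,3,4$, by a direct adaptation of the inductive argument that produced $\Theta_1$ in \cite[\S 3.1]{CHW1}. I would write $\Theta_s = \sum_{\nu\in\N[I]} \Theta_{s,\nu}$ with $\Theta_{s,0}=1\otimes 1$, and expand the intertwining relation $\Delta_s(u)\Theta_s = \Theta_s\bar\Delta_s(u)$ componentwise along the $\Z[I]$-grading. Because $\Delta_s(K_\mu) = K_\mu\otimes K_\mu$ and $\Delta_s(J_\mu) = J_\mu\otimes J_\mu$ are grouplike and each $\Theta_{s,\nu}$ sits in a zero-weight subspace of $\UU\otimes\UU$, the relations at $K_\mu$ and $J_\mu$ are automatic, and the entire construction is driven by the generator relations for $E_i$ and $F_i$.

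For uniqueness, I would project the $E_i$- and $F_i$-relations onto the $\nu$-graded piece and invoke the triangular decomposition (Proposition \ref{prop:UUtriang}) to obtain a recursion expressing $\Theta_{s,\nu}$ in terms of the $\Theta_{s,\nu'}$ with $\height(\nu')<\height(\nu)$. Exactly as in the proof for $\Theta_1$, this recursion has a one-dimensional solution space at each weight, so the initial condition pins $\Theta_s$ down uniquely. For existence, I would solve the same recursion using the nondegenerate bilinear form on $\ff$; the explicit placement of the $\tJ_i$-factors in $\Delta_s(E_i), \Delta_s(F_i)$ propagates through the induction to certify that $\Theta_{s,\nu}$ lies in the subspace advertised in the proposition. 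For example, the $\tJ_i$ attached to $F_i$ in $\Delta_2$ accumulates to a $\tJ_\nu$ on the second tensor slot of $\Theta_{2,\nu}$, and the coproduct swap between $\Delta_1$ vs.\ $\Delta_3$ (resp.\ $\Delta_2$ vs.\ $\Delta_4$) accounts for the exchange of tensor factors. An alternative (cleaner if one can make it work) would be to identify automorphisms of $\UU\otimes\UU$ built from the swap and from the $\omega$, $\dagger$, and bar-involutions that conjugate $\Delta_1$ into $\Delta_s$, and then simply transport $\Theta_1$ across them to obtain $\Theta_s$ explicitly.

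The unitarity $\Theta_s\bar\Theta_s=1$ then follows from uniqueness: applying $\bar\otimes\bar$ to the intertwining identity yields $\bar\Delta_s(u)\bar\Theta_s=\bar\Theta_s\Delta_s(u)$, so that $\Theta_s\bar\Theta_s$ commutes with $\Delta_s(u)$ for every $u\in\UU$, and its weight-zero component equals $1\otimes 1$, forcing the equality.

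The main obstacle is the parity-sensitive bookkeeping: one must track the $\pi$-factors and the precise locations of the $\tJ_\nu$-factors carefully (leaning on bar-consistency \eqref{eq:even}) to certify that $\Theta_{s,\nu}$ lands in the advertised subspace. Equivalently, under the symmetry-transport approach, the delicate step is pinning down the correct $\pi$-twisted swap on $\UU\otimes\UU$ and its compatibility with the super-tensor product multiplication. The underlying mechanism is parallel to the $\Theta_1$ case, but the asymmetric placement of $\tJ_i$ across the four coproducts makes each of $s=2,3,4$ demand its own careful verification.
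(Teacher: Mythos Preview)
Your proposal is correct and matches the paper's approach: the paper's proof simply notes that the argument of \cite[Theorem 3.1.1]{CHW1} extends to the other coproducts with suitable modifications, and also observes that the result follows from Lemma~\ref{lem:coprodids} (which relates the coproducts via $\tau$, $\omega$, and the bar-involution) together with the properties of $\Theta_1$---precisely your primary and alternative strategies. The transport approach you were uncertain about does work; the relevant identifications are $\Delta_2={}^\tau\Delta_1^{\omega}$, $\Delta_3={}^\tau\bar\Delta_1$, and $\Delta_4=\bar\Delta_1^{\omega^{-1}}$.
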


\begin{proof}
It is easy to see that the argument in the proof of \cite[Theorem 3.1.1]{CHW1} extends
to other coproducts after making suitable modifications. Alternatively, this will follow
from Lemma \ref{lem:coprodids} and the corresponding properties of $\Theta_1$.
\end{proof}

In this note, the coproduct of principal concern
will be $\Delta_3$, and we use the shortened notations 
$\Delta=\Delta_3$ and $\otimes=\otimes_3$, and $\Theta=\Theta_3$.

\begin{remark}\label{rem:coproducts}
We note that $\Delta_1$ is the main coproduct considered in \cite{CHW1} and $\Delta_4$
is the main coproduct considered in \cite{CHW2}. The coproduct $\Delta_1$ was introduced without specific motivation,
while $\Delta_4$ provides the most natural Tensor Product Rule for crystal bases
(cf. \cite[Theorem 4.12]{CHW2}). We will use $\Delta_3$ since it admits a more natural quasi-$\mathcal R$-matrix and is better
for the construction of canonical bases for the modified quantum group;
compare Theorem \ref{thm:cbdotU} and Corollary \ref{cor:dotCB Delta4 action}.
\end{remark}

\subsection{Highest weight modules and canonical bases}

Recall that a weight module for $\UU$ is a $\UU$-module $M$ satisfying 
\[M=\bigoplus M_\lambda,\quad M_\lambda=\set{m\in M \mid J_\mu K_\nu m=\pi^{\ang{\mu,\lambda}}v^{\ang{\nu,\lambda}} m, \mu,\nu\in Y}.\]
We can define integrable and highest weight modules as usual,
and set $\catO$ be the category of weight modules which are locally $\U^+$-finite.; cf. \cite{CHW1} for more details.

Recall that for $\lambda\in X^+$, the $\UU$-module $M(\lambda)$
is defined to be the space $\ff$ with the $\UU$-action satisfying
\[E_i 1=0,\quad F_i x=\theta_i x,\quad J_\nu K_\mu  x=\pi^{\ang{\nu,|x|'}}v^{\ang{\mu,|x|'}}x.\]
We call $M(\lambda)$ the Verma module.
There is a submodule $\mathcal T$ of $M(\lambda)$ generated by $\theta_i^{\ang{i,\lambda}+1}$
for all $i\in I$; we call the quotient $V(\lambda)=M(\lambda)/\mathcal T$ 
the irreducible module of highest weight $\lambda$,
and denote the image of $1$ in this quotient by $\eta_\lambda$.
\begin{remark} 
In fact, $V(\lambda)$ is not irreducible as a $\UU$-module, as it 
is the sum $V(\lambda)|_{\pi=1}\oplus V(\lambda)_{\pi=-1}$
of irreducible $\UU$-modules (cf. \cite[\S 2.7]{CHW1}).
However, if we define a {\em $\pi$-free weight module} as a weight module where the 
weight spaces are free $\Qvp$-modules, then $V(\lambda)$ is an irreducible
module in the category of $\pi$-free weight modules.
\end{remark}

In the following, we also will often consider the module ${}^\omega V(\lambda)$, which is called the irreducible module
of lowest weight $-\lambda$. We denote the image of $1$ in ${}^\omega V(\lambda)$ by $\xi_{-\lambda}$. Though $\omega$ is not an involution, we note that the 
twisting the action on $V(\lambda)$ by $\omega^2$ does not yield a new module.
To wit, we have the following lemma.

\begin{lemma}\label{lem:omega twist cyclicity}
The $\Qvp$-linear isomorphism
$\omega^2:V(\lambda) \rightarrow {}^{\omega^{2}}V(\lambda)$
given by \[\omega^2(x)= \pi_\nu \pi^{\ang{\tilde\nu,\lambda}} x,\quad x\in V(\lambda)_{\lambda-\nu'},\ \nu\in \N[I],\]
is a $\UU$-module isomorphism.

\end{lemma}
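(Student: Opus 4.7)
Since the map $\omega^2$ is defined to act by a nonzero scalar on each weight space, it is automatically a $\Qvp$-linear isomorphism of the underlying graded vector spaces; the only nontrivial content is that it intertwines the $\UU$-actions. The plan is therefore to verify the intertwining relation
\[ \omega^2(u\cdot x) = u\cdot_{\omega^2}\!\omega^2(x) = \omega^2(u)\cdot\omega^2(x), \quad u\in\UU,\ x\in V(\lambda), \]
on the generators $u\in\{E_i,F_i,K_\mu,J_\mu\}$. The first step is to compute $\omega^2$ explicitly on generators using \eqref{eq:omegadef}: since $\omega$ fixes $\tJ_i$ and sends $K_\mu\mapsto K_{-\mu}$, one finds
\[ \omega^2(E_i) = \pi_i\tJ_i E_i, \quad \omega^2(F_i) = \pi_i\tJ_i F_i, \quad \omega^2(K_\mu)=K_\mu,\quad \omega^2(J_\mu) = J_\mu. \]

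For $u=K_\mu$ or $u=J_\mu$ the intertwining is immediate, as these generators act diagonally on weight spaces and $\omega^2$ fixes them. The essential check is for $u=E_i$ and $u=F_i$. Take $x\in V(\lambda)_{\lambda-\nu'}$, so that $F_i x\in V(\lambda)_{\lambda-(\nu+i)'}$ and $E_i x\in V(\lambda)_{\lambda-(\nu-i)'}$. Unwinding the definition of $\omega^2(x)$ and pulling the scalar past $E_i,F_i$, together with the fact that $\tJ_i$ acts on the appropriately shifted weight space by $\pi^{\langle \tilde i,\lambda-(\nu\pm i)'\rangle}$, the required equality $\omega^2(F_i x) = \omega^2(F_i)\omega^2(x)$ reduces to the scalar identity
\[ \pi^{\langle \tilde i,(\nu+i)'\rangle} = 1, \]
and analogously the $E_i$ case reduces to $\pi^{\langle \tilde i,(\nu-i)'\rangle} = 1$.

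Using the root datum identity $\langle\tilde i,j'\rangle = d_i\cdot 2(i\cdot j)/(i\cdot i) = i\cdot j$, both exponents equal $i\cdot(\nu\pm i)\in 2\Z$ by \eqref{eq:even} (which follows from the bar-consistency assumption). Since $\pi^2=1$, both identities hold, and the intertwining is verified on all generators; by Proposition \ref{prop:UUtriang} this suffices. The map is thus a $\UU$-module homomorphism, and being a weightwise scalar it is bijective. The only potential obstacle is careful bookkeeping of the scalars arising from the weight shifts, but once the pairing $\langle\tilde i,j'\rangle=i\cdot j$ is observed, everything collapses onto the parity condition \eqref{eq:even}, which is the crucial input making the lemma work.
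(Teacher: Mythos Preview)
Your proof is correct and follows essentially the same approach as the paper's: both compute $\omega^2$ on $\UU$ (the paper states uniformly $\omega^2(u)=\pi_\mu\tJ_\mu u$ for homogeneous $u\in\UU_\mu$, while you check the generators) and then verify the intertwining relation, with the key scalar identity reducing to $\langle\tilde i,(\nu\pm i)'\rangle=i\cdot(\nu\pm i)\in 2\Z$ via \eqref{eq:even}. The paper's one-line argument suppresses exactly this last cancellation, which you have made explicit.
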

\begin{proof}
For any homogeneous $u\in \UU_\mu$, $\omega^2(u)=\pi_\mu\tJ_\mu u$.  Then for $x\in V(\lambda)_{\lambda-\nu'}$ with $\nu\in \N[I]$, 
we have $\omega^2(ux)=\pi_{\mu+\nu}\pi^{\ang{\tilde{\mu}+\tilde{\nu},\lambda}} ux=\omega^2(u)\omega^2(x)$.
\end{proof}

Since $M(\lambda)=\ff$ as a vector space, it is automatically
endowed with an $\A$-submodule ${}_\A M(\lambda)=\fint$.
We call this the integral form of $M(\lambda)$.
We also have integral forms for the irreducible modules: we call the $\A$-submodule 
${}_\A V(\lambda)=\Uint\xi_{-\lambda}$ (resp. ${}_\A ^\omega V(\lambda)=\Uint\xi_{-\lambda}$)
the integral form of $V(\lambda)$ (resp. ${}^\omega V(\lambda)$).

\subsection{The canonical basis of $\U^-$}
Let us recall some terminology.
Let $R$ be a ring.
A {\em $\pi$-basis} for a free $R^\pi$-module $M$ is a set $S\subset M$
such that there exists an $R^\pi$-basis $B$ for $M$ with $S= B\cup \pi B$.
(We caution the reader that this is called a {\em maximal $\pi$-basis} in \cite{CHW2}.)
We note that a $\pi$-basis of an $R^\pi$-module $M$ is an $R$-basis of $M$.
The fundamental $\pi$-basis in this paper is the following.

\begin{theorem}[\cite{CHW2}]\label{thm:canonicalbasis}
There is a $\pi$-basis $\cB$ of $\ff$ with the following properties:
\begin{enumerate}
\item $\cB$ is a $\pi$-basis of $\fint$ over $\A$.
\item Each $b\in \cB$ is homogeneous.
\item $\bar{b}=b$ for all $b\in \cB$.
\item For $\lambda\in X^+$, there is a subset $\cB(\lambda)$  such that
$\cB(V(\lambda))=\set{b\eta_\lambda:b\in \cB(\lambda)}$ is a $\pi$-basis of $V(\lambda)$, and if $b\in \cB\setminus \cB(\lambda)$, 
$b^- \eta_\lambda=0$.
\end{enumerate}
\end{theorem}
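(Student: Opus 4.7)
The plan is to follow the Lusztig--Kashiwara construction of canonical bases, adapted to the covering setting, which introduces the additional parameter $\pi$ and the $\pi$-basis notion. Concretely, I would build $\cB$ by intersecting a ``lower'' crystal lattice with a bar-invariant characterization, as in \cite{L90,K91}, with the modifications forced by the appearance of $\pi$.

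First I would set up the crystal structure on $\ff$: for each $i\in I$, use the decomposition $\ff=\bigoplus_n \theta_i^{(n)}\,{}_i\ff$ arising from the adjoint action to define Kashiwara-type operators $\tilde e_i,\tilde f_i$ on $\ff$, and let $\cL\subset \ff$ be the $\A_0^\pi$-submodule (with $\A_0=\Z_{(v)}$, the localization at $v=0$) generated by arbitrary monomials $\tilde f_{i_1}\cdots \tilde f_{i_n}\cdot 1$. A standard argument (established in this framework in \cite{CHW2}) shows that $\cL$ is preserved by the $\tilde e_i,\tilde f_i$ and that the images of these monomials in $\cL/v\cL$ form a $\pi$-basis $\bar\cB$. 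Next, I would use a uniqueness result in the spirit of \cite[Thm.~14.2.3]{L93}: for each $b\in \bar\cB$ there exists a unique bar-invariant element $G(b)\in \cL$ whose class modulo $v\cL$ equals $b$. Applying this to every $b\in \bar\cB$ gives the candidate $\pi$-basis $\cB=\{G(b):b\in \bar\cB\}$, which by construction satisfies (2) (homogeneity, since the procedure respects the $\Z[I]$-grading) and (3) (bar-invariance).

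For the integrality statement (1), I would argue as in Lusztig by showing that the bar-involution on $\ff$ preserves $\fint$, that $\fint\cap\cL$ is spanned over $\A$ by the same monomials, and then invoking the uniqueness of $G(b)$ to conclude $G(b)\in\fint$. For property (4), I would use the canonical surjection $\pi_\lambda:\ff=M(\lambda)\twoheadrightarrow V(\lambda)$, noting that $\pi_\lambda$ is bar-equivariant and carries $\fint$ onto $\Uint^-\eta_\lambda$, and that the induced map $\pi_\lambda:\cL/v\cL \to \cL(V(\lambda))/v\cL(V(\lambda))$ sends $\bar\cB$ to $\bar\cB(V(\lambda))\cup\{0\}$ (this is the standard string-based characterization: $b\mapsto 0$ precisely when some string through $b$ exceeds the bound prescribed by $\lambda$). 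Defining $\cB(\lambda)=\{b\in \cB: \pi_\lambda(b)\neq 0\}$ and applying the uniqueness of bar-invariant lifts in $V(\lambda)$ then yields (4).

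The principal obstacle is verifying the bar-involution preserves $\cL$ in the covering setting, since the $(v,\pi)$-quantum numbers and the Serre relations \eqref{eq:thetaserrerel} carry nontrivial $\pi$-twists that complicate the usual estimates. Concretely, one must show that for any $b\in \bar\cB$, writing $\overline{G_0(b)}=\sum_{b'} c_{b,b'}G_0(b')$ for some initial lift $G_0(b)$, the coefficients $c_{b,b'}$ lie in $\Zpv$ (up to signs and powers of $\pi$); the covering-type constants of structure for the Kashiwara operators on $\cL$, and the $\pi$-compatibility of the triangular decomposition of Proposition \ref{prop:UUtriang}, must be controlled carefully. Once this lattice-preservation is established, the remainder of the argument is a direct transcription of Lusztig's scheme, and the four stated properties follow.
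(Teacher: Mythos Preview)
The paper does not prove this theorem: it is stated as a citation from \cite{CHW2} and no argument is given here. Your outline---construct a crystal lattice $\cL$ on $\ff$ via Kashiwara operators, extract a $\pi$-basis of $\cL/v\cL$, and lift uniquely to bar-invariant elements---is exactly the strategy carried out in \cite{CHW2}, so in that sense your approach matches the source the paper relies on.

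One correction to your outline: the principal technical obstacle in the covering setting is not really that bar preserves $\cL$ (bar-invariance of the $(v,\pi)$-integers makes this relatively routine), but rather the \emph{grand loop} argument of Kashiwara---simultaneously proving that $\cL$ is stable under $\tilde e_i,\tilde f_i$, that the crystal bases of $\ff$ and of the $V(\lambda)$ are compatible, and that the polarization satisfies the near-orthonormality needed to characterize the canonical basis. In \cite{CHW2} this is where the $\pi$-twists genuinely matter, particularly in the tensor product rule for crystals (which forces a specific coproduct) and in controlling the signs in the almost-orthonormality statement. Your sketch glosses over this inductive core; the integrality and the bar-compatibility you flag are comparatively straightforward once the grand loop is in place.
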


We note that $\cB|_{\pi=1}\subset\ff|_{\pi=1}$ 
is precisely the Lusztig-Kashiwara canonical basis.
We shall need the following additional properties.

\begin{lemma}\label{lem:extra CB props}
Let $(\cdot,\cdot)$ be the polarization on $V(\lambda)$ (cf. \cite{CHW2}).
\begin{enumerate}
\item We have $({}_\A V(\lambda), {}_\A V(\lambda) )\subset \A$.
\item $(b^-\eta_\lambda, b'^-\eta_\lambda)\in {\rm sgn}(b)\delta_{b,b'}+v\Zpv$
for any $b,b'\in\cB$, where $\sgn(b)\in \set{1,\pi}$.
\item For $b\in \cB$, either $\rho(b)=b$ or $\rho(b)=\pi b$, so in particular $\rho(\cB)=\cB$.
\item $b\eta_\lambda=0$ if and only if $b\in \fint \theta_i^{(n)}$
for some $i\in I$ and $n\geq \ang{i,\lambda}+1$.
\end{enumerate}
\end{lemma}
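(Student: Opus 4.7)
My plan is to deduce each part by combining the established properties of $\cB$ from \cite{CHW2} with the uniqueness characterization of $\cB$ as a bar-invariant lift of the crystal basis, together with structural compatibilities between the polarization on $V(\lambda)$ and the bilinear form on $\ff$.

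For (1), I rely on the adjunction formula for the polarization: $(F_i x, y)$ equals a scalar multiple of $(x, E_i y)$ with modification factors built from $\tJ_i$, $\tK_{\pm i}$, and integer powers of $v_i$, all lying in $\A$. Since ${}_\A V(\lambda) = \Uint^- \eta_\lambda$ and $(\eta_\lambda, \eta_\lambda) = 1$, iterated application of the adjunction moves divided powers from one entry to the other and reduces any pairing of integral elements to an $\A$-linear combination of pairings between highest weight vectors. For (2), the bilinear form $(\cdot, \cdot)_\ff$ on $\ff$ from \cite{CHW2} is $\pi$-almost orthonormal on $\cB$. The polarization on $V(\lambda)$ is compatible with this form via the quotient map $\ff \twoheadrightarrow V(\lambda)$, $b \mapsto b^-\eta_\lambda$, in the sense that $(b^-\eta_\lambda, b'^-\eta_\lambda) \equiv (b,b')_\ff \pmod{v\Zpv}$ after the projection. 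Combined with (1) and the bar-invariance of both $b$ and $\eta_\lambda$, this yields the claimed values $\sgn(b)\delta_{b,b'} + v\Zpv$.

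For (3), $\rho$ restricts to $\ff$ (via $\theta_i \mapsto F_i$) as the anti-automorphism fixing each $\theta_i$. A direct check on generators shows $\rho$ commutes with the bar involution and preserves $\fint$ (since $\rho(\theta_i^{(n)}) = \theta_i^{(n)}$). Hence $\rho(\cB)$ is itself a bar-invariant $\pi$-basis of $\fint$, and the uniqueness characterization of $\cB$ forces $\rho(\cB) = \cB$ set-wise up to the $\pi$-ambiguity. The sharper claim that $\rho(b) \in \{b, \pi b\}$ for each individual $b$ I would establish by verifying that $\rho$ induces the identity on the associated graded of $\fint$ with respect to the crystal lattice filtration, paralleling the classical argument in \cite[Prop.~14.4.3]{L93}, with the $\pi$-factors arising in the Serre relations tracked and shown to collapse to a single integer power of $\pi$.

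For (4), the "if" direction is immediate from $\theta_i^{(n)}\eta_\lambda = 0$ for $n > \ang{i,\lambda}$. For "only if", the key input is a string decomposition of $\cB$: for each $i \in I$, $\cB$ partitions as $\cB = \bigsqcup_{n \geq 0} \cB_{n,i}$ with $b \in \cB_{n,i}$ iff $b \in \fint \theta_i^{(n)} \setminus \fint \theta_i^{(n+1)}$, the covering analog of the decomposition in \cite[\S14.2]{L93}. Theorem \ref{thm:canonicalbasis}(4) then identifies $\cB(\lambda)$ precisely as those $b$ with string level $\leq \ang{i,\lambda}$ for every $i$. Hence $b\eta_\lambda = 0$ forces $b \in \cB_{n,i}$ with $n \geq \ang{i,\lambda} + 1$ for some $i$, giving $b \in \fint \theta_i^{(n)}$ as required.

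The main obstacle is the sharp form of (3): the uniqueness characterization of $\cB$ directly yields only the set-level statement $\rho(\cB) = \cB$, whereas the conclusion asserts that each individual $b$ is fixed up to a $\pi$-scalar. Pinning this down requires either a Kashiwara-operator-based argument or a direct rank-one reduction, with careful bookkeeping of the accumulated $\pi$-factors at each Serre-relation step. Once (3) is secured, (4) follows from the string decomposition, whose covering adaptation is essentially parallel to Lusztig's but requires explicit statement compatible with the $\pi$-structure.
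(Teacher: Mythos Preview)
Your treatment of (1) and (2) is in line with the paper's, which simply cites \cite[Proposition 19.3.3]{L93} and \cite[Proposition 6.1]{CHW2}.

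For (3) and (4), however, the paper takes a genuinely different route: it uses the \emph{twistor} $\Tw$ on $\ff[\bt]$ (Proposition~\ref{prop:halfquantumtwist}) to transfer the known results at $\pi=1$ to the covering setting. Concretely, for (3) one knows $\rho$ fixes $\cB|_{\pi=1}$ elementwise (Lusztig), and since $\Tw(\epsilon_+ b)=\bt^{\ell(b)}\epsilon_- b$ while $\rho$ and $\Tw$ commute up to a sign, one deduces $\rho(\epsilon_- b)\in\{\epsilon_- b,\pi\epsilon_- b\}$ and hence $\rho(b)\in\{b,\pi b\}$. For (4) the same mechanism works: $\Tw$ preserves the right ideals $\ff[\bt]\theta_i^{(n)}$, so the classical characterization of $\ker(b\mapsto b\eta_\lambda)$ at $\pi=1$ passes to $\pi=-1$ and thus to $\cB$. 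This is short and completely bypasses any direct $\pi$-bookkeeping.

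Your direct approach for (3) is the natural first attempt, but the ``main obstacle'' you flag is real and you have not closed it. Knowing that $\rho(\cB)=\cB$ as a set and that $\rho$ preserves each $\ff_\nu$ does not, by itself, pin down $\rho(b)$ up to $\pi$: a weight space can contain several canonical basis elements, and the crystal-lattice argument you sketch (``$\rho$ induces the identity on the associated graded'') requires showing that $\rho$ commutes with the Kashiwara operators modulo $v\cL$ up to a single power of $\pi$, which is exactly the delicate point. The paper's twistor argument sidesteps this entirely. Your plan for (4) via the $i$-string decomposition $\cB=\bigsqcup_n \cB_{n,i}$ is reasonable, but note that the precise covering statement you need (that $\cB(\lambda)$ is characterized by string levels $\le\ang{i,\lambda}$ for all $i$) is not what Theorem~\ref{thm:canonicalbasis}(4) asserts; you would need to extract it from \cite{CHW2}, whereas the twistor again reduces this to \cite[Theorem 14.4.11]{L93} in one line.
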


\begin{proof}
(1) and (2) follow from the proof of \cite[Proposition 19.3.3]{L93} and \cite[Proposition 6.1]{CHW2}.

For (3), it is known that $\cB|_{\pi=1}$ is $\rho$-invariant (\cite[Theorem 14.4.3(c)]{L93}).
To prove the more general fact, we utilize the map $\Tw$ in Proposition \ref{prop:halfquantumtwist}; for the convenience of the reader, we will restate
the necessary facts here.
By adjoining $\bt=\sqrt{-1}$ to $\Qvp$ and considering
$\ff[\bt]=\Qvp[\bt]\otimes_{\Qvp} \ff$,
there is a $\Q$-linear map $\Tw:\ff[\bt]\rightarrow \ff[\bt]$
with  $\Tw(\pi)=-\pi$
and $\Tw(b)=\bt^{\ell(b)} b$ for some integer $\ell(b)$.
Note that, in particular, $\Tw(\epsilon_+ b)=\bt^{\ell(b)}\epsilon_- b$.

By \cite[Proposition 2.6]{CFLW}, $\rho\Tw$ and $\Tw\rho$ are equal up to a sign.
Then $\Tw(\rho(b))= \varsigma \bt^{\ell(b)} \rho(b)$, where
$\varsigma=\pm 1$. But then using $\rho$-invariance
of the canonical basis when $\pi=1$,
\[\bt^{\ell(b)} \epsilon_{-}b=\Tw(\epsilon_+ b)=\Tw(\rho(\epsilon_+b))=\varsigma \bt^{\ell(b)}\epsilon_-\rho(b).\]
Since $-\epsilon_-=\pi\epsilon_-$, $\rho(\epsilon_- b)=\epsilon_- b$ or $\epsilon_- \pi b$.
Therefore, $\rho(b)=b$ or $\rho(b)=\pi b$, as claimed.

Finally, we note that $\Tw(\ff[\bt]\theta_i^{n})\subset \ff[\bt]\theta_i^{n}$,
the statement of (4) holds for $\cB|_{\pi=1}$ (cf. \cite[Theorem 14.4.11]{L93}),
and we have $\Tw(\epsilon_\pm b)=\bt^{\ell(b)}\epsilon_\mp b$. Combining these
facts proves (4) for $\cB|_{\pi=-1}$ and hence for $\cB$.

\end{proof}

\section{The modified quantum covering groups}

In this section, we will define the modified quantum group and systematically develop its fundamental properties. We also define a family of modules $\mathcal F$ which
will be important in the subsequent sections.

\subsection{The modified form}

The modified quantum group may be defined as a direct sum of certain quotients of $\UU$ (cf. \cite[Chapter 23]{L93})
but the essential description of this algebra may be given as follows.

\begin{definition}\label{def:mqg}
The {\em modified quantum covering group} $\dotU$ associated to the root datum $(Y,X,I,\cdot)$
is the associative $\Qvp$-algebra without unit on symbols $x1_\lambda$ and $1_\lambda x$
for $x\in \UU$ and $\lambda\in X$ satisfying the relations
\[x1_\lambda y1_\eta=\delta_{\lambda,\eta+|y|} (xy)1_\lambda,\quad
 x1_\lambda=1_{\lambda+|x|} x\qquad 
 \text{for all homogeneous } x,y\in \UU,\lambda\in X,\]
\[K_\nu1_\lambda=v^{\ang{\nu,\lambda}}1_\lambda,\quad
J_\nu1_\lambda=\pi^{\ang{\nu,\lambda}}1_\lambda\qquad 
 \text{for all } \nu\in Y,\lambda\in X.\]
\end{definition}

\begin{remark}
A version of $\dotU$ has already been defined in \cite{CFLW,CW} in different ways. 
In \cite[Definition 4.2]{CFLW}, the modified form is defined using generators $1_\lambda$, 
$E_i1_\lambda$ and $F_i1_\lambda$ for $i\in I$ and $\lambda\in X$
satisfying certain relations; it is straightforward to see that this is equivalent to our definition.
In \cite[\S 6.1]{CW}, a rank one modified form is defined using certain quotients of $\U$ 
in direct parallel to the definition in \cite[\S 23.1]{L93}.
This construction can be generalized to higher rank, and results in an algebra isomorphic to our definition.
\end{remark}

The algebra $\dotU$ is naturally $\UU$-bimodule
under the following action: for $x,y,z\in \UU$,
we set \[x (y1_\lambda)z=(xyz)1_{\lambda-|z|}.\]
We note the following commutation relations which may be deduced from \cite[Lemma 2.1.6]{CHW1} or \cite[Proposition 6.1]{CW}.
\begin{lemma} \label{lem:dotU commutation relations}
We have the following identities in $\dotU$:
\begin{align*}
&E_i^{(N)}1_\lambda F_i^{(M)}=\sum_t \pi_i^{MN-\binom{t+1}{2}} F_i^{(M-t)}
 \bbinom{M+N+\ang{i,\lambda}}{t}_{v_i,\pi_i} 1_{\lambda+2N+2M-2t}E_i^{(N-t)}, \\
&F_i^{(N)}1_\lambda E_i^{(M)}=\sum_t (-1)^t \pi_i^{(M-t)(N-t)-t^2}
 E_i^{(M-t)}\bbinom{M+N-\ang{i,\lambda}}{t}_{v_i,\pi_i}1_{\lambda-2N-2M+2t}F_i^{(N-t)}, \\
&E_i^{(N)} F_j^{(M)}1_\lambda=\pi^{MNp(i)p(j)} F_j^{(M)}E_i^{(N)}1_\lambda\quad\text{if }i\neq j.
\end{align*}
\end{lemma}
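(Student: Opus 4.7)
The plan is to reduce all three identities to either a direct iteration of the commutation relation \eqref{eq:commutatorrelation} or to a rank-one identity inside $\UU$, followed by specialization using the action of $\UU^0$ on the idempotents $1_\lambda$.

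For the third identity, the hypothesis $i \neq j$ kills the Kronecker delta in \eqref{eq:commutatorrelation}, leaving the super-commutation $E_i F_j = \pi^{p(i)p(j)} F_j E_i$. Since $\pi$ is central, this commutation passes through all the $(v,\pi)$-quantum factorials that arise when forming divided powers, so iterating gives $E_i^{(N)} F_j^{(M)} = \pi^{MN p(i)p(j)} F_j^{(M)} E_i^{(N)}$ in $\UU$; right-multiplying by $1_\lambda$ yields the claim.

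For the first identity I would first establish the corresponding rank-one formula in $\UU$ itself, namely an identity of the shape
\[
E_i^{(N)} F_i^{(M)} \;=\; \sum_t \pi_i^{MN - \binom{t+1}{2}}\, F_i^{(M-t)} \bbinom{i;\, M+N-2t}{t} E_i^{(N-t)},
\]
where the central factor is the element of $\UU^0$ built from $[i;n]$ as in Section \ref{subsec:cqg}. This is proved by induction on $\min(M,N)$: the base case $M=0$ or $N=0$ is trivial, and the inductive step applies the rank-one relation \eqref{eq:commutatorrelation} once, then reassembles the result using standard $(v,\pi)$-binomial manipulations (for instance rearrangements of $[a+1]_{v_i,\pi_i}\bbinom{a}{t}_{v_i,\pi_i}$ and sibling identities) to absorb the extra commutator term into the binomial coefficients. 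This is the essential content of \cite[Lemma 2.1.6]{CHW1} phrased in divided-power form. With the $\UU$-level identity in hand, I rewrite $E_i^{(N)} 1_\lambda F_i^{(M)} = E_i^{(N)} F_i^{(M)} 1_{\lambda + Mi'}$ via the rule $x 1_\mu = 1_{\mu+|x|} x$, substitute the rank-one formula, and commute each central factor past the nearest idempotent. On $1_\mu$ the element $\tJ_i \tK_i$ acts as $\pi_i^{\ang{i,\mu}} v_i^{\ang{i,\mu}}$ and $\tK_{-i}$ as $v_i^{-\ang{i,\mu}}$, so $[i;n]\,1_\mu = [\ang{i,\mu}+n]_{v_i,\pi_i} 1_\mu$ and hence $\bbinom{i;n}{t} 1_\mu = \bbinom{\ang{i,\mu}+n}{t}_{v_i,\pi_i} 1_\mu$. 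A final tracking of weight shifts through the remaining divided powers reproduces the stated formula together with the correct idempotent label.

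The second identity is obtained from the first by applying the anti-automorphism $\rho$ of \eqref{eq:rhodef} (or, equivalently, the Chevalley automorphism $\omega$ of \eqref{eq:omegadef}); these interchange $E_i$ and $F_i$ up to explicit $\tJ\tK$-factors, which account for the sign $(-1)^t$ and the modified exponent $\pi_i^{(M-t)(N-t)-t^2}$. The main obstacle throughout is not conceptual but rather the careful bookkeeping of the $\pi_i$-prefactors: the inductive rearrangements in the rank-one step involve $\pi$-twisted $(v,\pi)$-binomial identities that have no counterpart in the classical setting, and matching the exponents $MN - \binom{t+1}{2}$ and $(M-t)(N-t)-t^2$ requires invoking the rewriting identities for $[k]^!_{v_i,\pi_i}$ and $\bbinom{n}{k}_{v_i,\pi_i}$ established at the end of Section 2.2. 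These manipulations already appear in \cite[Lemma 2.1.6]{CHW1} and \cite[Proposition 6.1]{CW}, so the proof can be concluded by citing those computations after establishing the reduction to rank one.
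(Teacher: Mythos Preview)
Your proposal is correct and follows essentially the same route as the paper: the paper does not give a standalone argument but simply observes that the identities may be deduced from \cite[Lemma~2.1.6]{CHW1} or \cite[Proposition~6.1]{CW}, and your outline is precisely an unpacking of that deduction (iterate \eqref{eq:commutatorrelation} for $i\neq j$, use the divided-power rank-one formula in $\UU$ and specialize the $\UU^0$-binomials on idempotents, then apply $\rho$ or $\omega$ for the second identity). The only content beyond the citation is the idempotent bookkeeping you describe, which is routine.
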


The next proposition shows that the
various algebra (anti)automorphisms on $\UU$ we have previously 
defined have analogous (anti)automorphisms on $\dotU$ which are compatible
with the bimodule presentation.
\begin{proposition}\label{prop:dotUmorphisms} \hspace{.1em}

\begin{enumerate} 
\item There exists a $\Qvp$-algebra automorphism $\omega$ of $\dotU$ satisfying \[\omega(x1_\lambda)=\omega(x)1_{-\lambda}.\]
\item There exists a $\Qvp$-algebra antiautomorphism $\rho$ of $\UU$ satisfying \[\rho(x1_\lambda)=1_{-\lambda}\rho(x).\]
\item There exists a $\Q^\pi$-algebra involution $\bar{\phantom{x}}$ of $\dotU$ satisfying \[\bar{x1_\lambda}=\bar{x}1_\lambda.\]
\item  There exists a $\Q^\pi$-algebra involution $\dagger$ of $\dotU$ satisfying \[(x1_\lambda)^\dagger=x^\dagger 1_\lambda.\]
\end{enumerate}
\end{proposition}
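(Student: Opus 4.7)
The plan is to verify each of the four maps is well-defined by checking that the proposed formulas respect the defining relations of $\dotU$ given in Definition \ref{def:mqg}. Since $\omega, \rho, \bar{\phantom{x}}, \dagger$ are already known to be (anti)automorphisms of $\UU$, the only thing to verify is compatibility with the idempotent relations $K_\nu 1_\lambda=v^{\ang{\nu,\lambda}}1_\lambda$, $J_\nu 1_\lambda=\pi^{\ang{\nu,\lambda}}1_\lambda$, together with the multiplicative compatibility $x1_\lambda \cdot y1_\eta = \delta_{\lambda,\eta+|y|}(xy)1_\eta$. The key book-keeping input is the effect of each map on weight: $\omega$ reverses the $\Z[I]$-grading since $\omega(E_i)=F_i$, while $\rho, \bar{\phantom{x}}, \dagger$ all preserve it (using $|\rho(E_i)|=|\tJ_iE_i|=i$, etc.).

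For $\omega$, the formula $\omega(x1_\lambda)=\omega(x)1_{-\lambda}$ is consistent with $x1_\lambda=1_{\lambda+|x|}x$ since $|\omega(x)|=-|x|$, and direct substitution confirms $\omega(K_\nu 1_\lambda)=K_{-\nu}1_{-\lambda}=v^{\ang{-\nu,-\lambda}}1_{-\lambda}=v^{\ang{\nu,\lambda}}1_{-\lambda}$, matching $\omega(v^{\ang{\nu,\lambda}}1_\lambda)$; the $J_\nu$ relation is similar. Multiplicativity then follows from $\omega$'s multiplicativity on $\UU$ and the weight-reversal identity $\delta_{-\lambda,-\eta+|\omega(y)|}=\delta_{\lambda,\eta+|y|}$. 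For $\rho$, since it is an antiautomorphism preserving weight, one checks $\rho(1_{\lambda+|x|}x)=\rho(x)1_{-\lambda-|x|}=1_{-\lambda}\rho(x)$, and the $K_\nu$, $J_\nu$ relations go through as for $\omega$; antimultiplicativity is then formal.

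For $\bar{\phantom{x}}$ and $\dagger$, the nontrivial verifications are the $K_\nu$ and $J_\nu$ idempotent relations, since these must absorb the scalar twists $v\mapsto \pi v^{-1}$ (resp. $v\mapsto \pi v$). For the bar involution, $\overline{K_\nu 1_\lambda}=J_\nu K_{-\nu}1_\lambda=\pi^{\ang{\nu,\lambda}}v^{-\ang{\nu,\lambda}}1_\lambda$, which agrees with $\overline{v^{\ang{\nu,\lambda}}1_\lambda}=(\pi v^{-1})^{\ang{\nu,\lambda}}1_\lambda$. For $\dagger$, similarly $(K_\nu 1_\lambda)^\dagger=J_\nu K_\nu 1_\lambda=\pi^{\ang{\nu,\lambda}}v^{\ang{\nu,\lambda}}1_\lambda=(\pi v)^{\ang{\nu,\lambda}}1_\lambda$. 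Both cases make crucial use of the specific formulas $\bar K_\nu=J_\nu K_{-\nu}$ and $K_\nu^\dagger=J_\nu K_\nu$ from \eqref{eq:bardef} and \eqref{eq:daggerdef}, which were designed precisely to balance the scalar action on the $1_\lambda$. The $J_\nu$ relations are immediate since $\pi^2=1$ and both involutions fix $J_\nu$. The only non-routine part, and the place where care is needed, is to check that these scalar balancings work out with the correct signs of $\pi$ — but this is essentially a restatement of why \eqref{eq:bardef} and \eqref{eq:daggerdef} were well-defined on $\UU$ in the first place. Once these checks are done, the compatibility with products on $\dotU$ is automatic from the corresponding property on $\UU$.
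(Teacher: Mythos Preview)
Your proof is correct and follows the same approach as the paper, which simply states that the result ``follows as an elementary consequence of the existence of these maps on $\UU$ and the definition of $\dotU$.'' You have merely written out the explicit verifications that the paper leaves to the reader.
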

\begin{proof}
This follows as an elementary consequence of the existence of these maps on $\UU$
and the definition of $\dotU$.
\end{proof}

A $\dotU$-module $M$ is called {\em unital}
if for any $m\in M$, $m=\bigoplus_{\lambda\in X} 1_\lambda m$ with $1_\lambda m=0$ for all but finitely
many $\lambda\in X$. As observed by Lusztig \cite{L93}, unital $\dotU$-modules may naturally be viewed as 
weight $\UU$-modules by interpreting the idempotent $1_\lambda$ as 
a projection onto the $\lambda$-weight space. 
More precisely, any unital $\dotU$-module $M$ is a weight $\UU$-module under
the action $x\cdot m=\sum_{\lambda\in X} x1_\lambda m$, and similarly any weight $\UU$-module
$M$ is a unital $\dotU$-module via $x1_\lambda m=xm_\lambda$, where $m_\lambda$ is the orthogonal
projection of $m$ to the $M_\lambda$ weight space. 

\subsection{Integral form}
Suppose $M$ and $M'$ are free $\Qvp$-modules with 
$\pi$-bases $S\subset M$ and $S'\subset M'$.
The $S\otimes S=\set{s\otimes s\mid s\in S, s'\in S'}$ is clearly
a $\pi$-basis for $M\otimes M'$. However, we note the map $S\times S'\rightarrow S\otimes S'$ given by $(s,s')\mapsto s\otimes s'$ is not a bijection, since
clearly $s\otimes s'=(\pi s)\otimes (\pi s')$. We will occasionally need
an honest index set, so let $\sim$ be the equivalence relation on $S\times S'$
given by $(\pi s, s')\sim (s,\pi s')$. Then we set
\begin{equation}\label{eq:timespi def}
S\times_\pi S'= S\times S'/\sim.
\end{equation}

From the triangular decomposition of $\UU$, $\UU$ has a $\pi$-basis consisting of elements of the form $b^+J_\mu K_\nu b'^-$
where $(b,b')\in \cB\times_\pi \cB$ and $\mu,\nu\in Y$. Since
\[
(b^+J_\mu K_\nu b'^-)1_\lambda=
\pi^{\ang{\mu,\lambda_1}}v^{\ang{\nu,\lambda_1}}
b^+1_{\lambda_1} b'^-,
\]
where $\lambda_1=\lambda-|b'|$, we see that
$\set{b^+1_\lambda b'^-: (b,b')\in\cB\times_\pi \cB}$ forms a $\pi$-basis of $\dotU$.
Similarly, $\set{b^-1_\lambda b'^+:(b,b')\in\cB\times_\pi \cB}$ forms a $\pi$-basis of $\dotU$.
In fact, these elements span an integral form of $\dotU$.

\begin{lemma} \hspace{.1em}

\begin{enumerate}
\item The $\A$-submodule of $\dotU$ spanned
by the elements $x^+1_\lambda x'^-$ (with $x,x'\in \fint$)
coincides with the $\A$-submodule of $\dotU$
spanned by the elements $x^-1_\lambda x'^+$ (with $x,x'\in \fint$).
We denote it by $\dotUint$.
\item The elements $\set{b^+1_\lambda b'^-: (b,b')\in\cB\times_\pi \cB}$ (resp. 
$\set{b^-1_\lambda b'^+: (b,b')\in\cB\times_\pi \cB}$) form a $\pi$-basis
of $\dotUint$.
\item $\dotUint$ is a $\A$-subalgebra of $\dotU$
which is generated by the elements 
$E_i^{(n)}1_\lambda$ and $F_i^{(n)}1_\lambda$ for $i\in I$, $n\geq 0$,
and $\lambda\in X$.
\end{enumerate}
\end{lemma}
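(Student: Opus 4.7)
The plan is to follow the strategy of \cite[\S 23.2]{L93}, adapted to the $(v,\pi)$-setting using the covering-group commutation relations.

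For part (1), I would use Lemma~\ref{lem:dotU commutation relations}. The first identity expresses $E_i^{(N)} 1_\lambda F_i^{(M)}$ as an $\A$-linear combination of terms $F_i^{(M-t)} 1_{\mu} E_i^{(N-t)}$, since the scalars that appear are powers of $\pi_i$ times $(v_i,\pi_i)$-binomial coefficients at integer arguments, all of which lie in $\A$. Together with the $i\neq j$ identity, this lets me move an entire $E$-factor through an $F$-factor at the cost of $\A$-coefficients and lower-order $EF$-terms. By straightforward induction on the total degree of the $E$-factors (or symmetrically, of the $F$-factors), every $x^+ 1_\lambda x'^-$ with $x,x'$ monomials in divided powers of $\theta_i$ can be rewritten as an $\A$-linear combination of $y^- 1_\mu y'^+$, and vice versa. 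Taking $\A$-spans gives (1).

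For part (2), by definition $\dotUint$ is $\A$-spanned by the elements $b^+ 1_\lambda b'^-$ for $(b,b')\in\cB\times\cB$, and since $\cB$ is a $\pi$-basis of $\fint$, reducing modulo the relation $(\pi b)\otimes b'\sim b\otimes(\pi b')$ implicit in $\dotU$ gives the spanning claim for $\cB\times_\pi \cB$; by (1) the analogous statement holds with roles of $+$ and $-$ reversed. For $\A$-linear independence, I would fix a pair of weights $(\mu_1,\mu_2)\in X\times X$ and work inside $1_{\mu_1}\dotU 1_{\mu_2}$; the element $b^-1_\lambda b'^+$ lives in a unique such bi-weight space, which forces $\lambda$ and the weights $|b|, |b'|$ to be determined by $(\mu_1,\mu_2)$ and the gradings. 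The triangular decomposition of Proposition~\ref{prop:UUtriang}, combined with the collapse $J_\mu K_\nu 1_\lambda = \pi^{\ang{\mu,\lambda}} v^{\ang{\nu,\lambda}} 1_\lambda$, then shows that $\{b^- 1_\lambda b'^+\}$ is a $\Qvp$-basis of $1_{\mu_1}\dotU 1_{\mu_2}$ as $b,b'$ range over any $\Qvp$-basis of $\ff$. Applied to the $\pi$-basis $\cB$, one gets that $\cB\times_\pi\cB$ indexes a $\pi$-basis of $1_{\mu_1}\dotU 1_{\mu_2}\cap\dotUint$ over $\A$, which assembles to the required global $\pi$-basis.

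For part (3), the subalgebra property follows from combining (1) with the evident product formula $(x^+ 1_\lambda x'^-)(y^+ 1_\mu y'^-) = \delta_{\lambda-|x'^-|,\,\mu+|y^+|}\,x^+(x'^- y^+)y'^- 1_{\mu-|y'^-|}$: expanding the middle $x'^- y^+$ as an $\A$-combination of elements $z^+ 1_{\mu''} z'^-$ via (1) and then reassembling keeps the product in $\dotUint$. Generation by $E_i^{(n)}1_\lambda$ and $F_i^{(n)}1_\lambda$ follows because $\fint$ is generated as an $\A$-algebra by the $\theta_i^{(n)}$, so every $b^+ 1_\lambda$ (and similarly $1_\lambda b'^-$) is an $\A$-linear combination of products of $E_i^{(n)} 1_\mu$'s (resp.\ $F_i^{(n)} 1_\mu$'s).

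The main obstacle is verifying that the $(v,\pi)$-binomial coefficients $\bbinom{M+N+\ang{i,\lambda}}{t}_{v_i,\pi_i}$ appearing in the swap relation genuinely lie in $\A$ for arbitrary integer values of the top argument (possibly negative), and that the resulting induction on degree terminates inside $\dotUint$ without picking up non-integral denominators; this is a $(v,\pi)$-analogue of the standard manipulations with Gaussian binomials at negative arguments and should be routine once the sign-and-$\pi$ bookkeeping from \eqref{eq:nvpi} is carried out. Everything else is a direct transcription of Lusztig's argument.
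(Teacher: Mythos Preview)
Your proposal is correct and follows essentially the same approach as the paper: the paper's proof simply invokes repeated application of Lemma~\ref{lem:dotU commutation relations} for (1) and (3), and the triangular decomposition of $\dotU$ for (2), which is exactly what you spell out in more detail. Your flagged ``main obstacle'' is already handled by \eqref{eq:nvpi}, where the $(v,\pi)$-binomial coefficients are asserted to lie in $\A$ for all $n\in\Z$ and $k\in\Z_{\ge 0}$.
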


\begin{proof}
Recall that $\fint$ is the $\Z[v^{\pm 1}]^\pi$ 
generated by $\theta_i^{(n)}$ for $i\in I$ and 
$n\geq 0$, and so repeated application of 
Lemma \ref{lem:dotU commutation relations} implies (1) and (3).
(2) follows from the triangular decomposition of $\dotU$.
\end{proof}

\subsection{Tensor modules of $\dotU$}\label{subsec:tensor dotU}
The algebra $\dotU$ does not admit a natural co-product, as any candidate would require
an infinite sum (cf. \cite[\S 23.1.5]{L93}). However,
since unital modules and weight modules are equivalent, the Hopf structure on $\U$
imposes a unital $\dotU$-module structure on the tensor product of weight modules.
In the following, we will occasionally need to consider these tensor products as modules under different
coproducts of $\UU$. To that end, we will now relate the coproducts from \S \ref{subsec:Hopf}.

For $s\in \set{1,2,3,4}$, let $\Delta_s^{\omega}=(\omega^{-1}\otimes \omega^{-1})\circ\Delta_s\circ \omega$
and $\Delta_s^{\omega^{-1}}=(\omega\otimes \omega)\circ\Delta_s\circ \omega^{-1}$.
Let
$\bar{\Delta_s}=\bar{\phantom x}\circ \Delta_s \circ \bar{\phantom x}$,
where $\bar{\phantom x}=\bar{\phantom x}\otimes \bar{\phantom x}:
\UU\otimes \UU\rightarrow \UU\otimes\UU$.
Also, let ${}^\tau\Delta_s$ be the composition
of $\Delta_s$ with the automorphism $\tau:\UU\otimes \UU\rightarrow
\UU\otimes \UU$ given by $x\otimes y\mapsto \pi^{p(x)p(y)} y\otimes x$.
In particular, we note that ${}^\tau\bar\Delta_{s}^{\omega}$
is a well defined symbol, since $\tau$, $\barmap\otimes\barmap$ and $\omega\otimes \omega$ all commute,
and $\omega$ and $\bar{\phantom{x}}$ commute.
\begin{lemma}\label{lem:coprodids}
We have the following identifications:
\[\Delta_2={}^\tau \Delta_1^{\omega}\]
\[\Delta_3={}^\tau\bar\Delta_1\]
\[\Delta_4={}^\tau\Delta_3^{\omega^{-1}}=\bar{\Delta}_1^{\omega^{-1}}\]
\end{lemma}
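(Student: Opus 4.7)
The plan is to reduce each identity to a verification on the algebra generators of $\UU$. This requires first checking that every map appearing in the statements is a $\Qvp$-algebra homomorphism $\UU \to \UU\otimes \UU$, where the target carries the super tensor product structure $(x\otimes y)(z\otimes w) = \pi^{p(y)p(z)}xz\otimes yw$. The nontrivial point is that $\tau$ is itself an algebra automorphism of $\UU\otimes \UU$: the $\pi^{p(x)p(y)}$ prefactor in $\tau$ compensates exactly for the super-sign when multiplying tensors, as one verifies by comparing $p(xz)p(yw)$ against $p(y)p(z) + p(x)p(w)$ modulo $2$. Since $\omega$ is a $\Qvp$-algebra automorphism of $\UU$ and $\bar{\phantom{x}}$ a $\Q^\pi$-algebra automorphism (which nonetheless conjugates the $\Qvp$-structure of $\Delta_1$ into a $\Qvp$-homomorphism via $\pi \bar{v^{-1}} = v$), all four composites $\Delta_1^\omega$, $\bar\Delta_1$, $\Delta_3^{\omega^{-1}}$, and $\bar\Delta_1^{\omega^{-1}}$ are algebra homomorphisms, and prefixing with $\tau$ preserves this.

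With algebraicity in hand, the identities need only be checked on $E_i, F_i, J_\mu, K_\mu$. The $J,K$-cases are immediate because $\Delta_s$ agrees for all $s$ on these generators and these generators are stable under the other operations up to a change of sign of $\mu$ that the composites undo. For $E_i, F_i$, I would compute directly. For example, for $\Delta_3 = {}^\tau \bar\Delta_1$: using $\bar E_i = E_i$ and $\bar{\tJ_i\tK_i} = \tJ_i \cdot \tJ_i \tK_{-i} = \tK_{-i}$, one gets $\bar\Delta_1(E_i) = E_i \otimes 1 + \tK_{-i}\otimes E_i$, and applying $\tau$ yields $1\otimes E_i + E_i\otimes \tK_{-i} = \Delta_3(E_i)$. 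The remaining checks are analogous; for $\Delta_2={}^\tau\Delta_1^\omega$ and $\Delta_4={}^\tau\Delta_3^{\omega^{-1}}$, the key computations are $\omega^{-1}(E_i) = \pi_i \tJ_i F_i$ (derived from $\omega(F_i) = \pi_i \tJ_i E_i$ and $\tJ_i^2 = 1$), together with the fact that $\tJ_i$ commutes with $E_i$ and $F_i$ (since $\pi^{\ang{\tilde i, i'}} = \pi^{2d_i} = 1$).

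For the second equality in the last line, rather than re-verify on generators, I would derive $\bar\Delta_1^{\omega^{-1}} = {}^\tau\Delta_3^{\omega^{-1}}$ by conjugating the already established $\Delta_3 = {}^\tau\bar\Delta_1$ by $\omega^{-1}$ and prefixing with $\tau$: observing that $\tau^2 = \mathrm{id}$ (since $\pi^{2p(x)p(y)} = 1$) and that $\tau$ commutes with $\omega\otimes\omega$ and with $\bar{\phantom{x}}\otimes\bar{\phantom{x}}$ (both preserve parity), we obtain ${}^\tau\Delta_3^{\omega^{-1}} = \tau^2\bar\Delta_1^{\omega^{-1}} = \bar\Delta_1^{\omega^{-1}}$. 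The main obstacle is simply bookkeeping: the $\pi$-factors produced by the super-commutation in $\UU\otimes\UU$, by $\omega^{\pm 1}$ applied to Chevalley generators, and by $\tau$ all need to be tracked carefully. Recording $\tJ_i^2 = 1$ and the commutativity of $\tJ_i$ with $E_i, F_i$ once at the outset reduces every generator-wise check to a short routine calculation.
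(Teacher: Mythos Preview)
Your proposal is correct and follows essentially the same approach as the paper: reduce to generators by observing that all composites are algebra homomorphisms, verify directly on $E_i$ and $F_i$ (with $J_\mu, K_\mu$ being straightforward), and derive the second equality in the last line from $\Delta_3={}^\tau\bar\Delta_1$ using $\tau^2=\mathrm{id}$ and the commutation of $\tau$ with $\omega\otimes\omega$ and $\bar{\phantom{x}}\otimes\bar{\phantom{x}}$. You supply a bit more justification than the paper (e.g.\ explicitly arguing that $\tau$ is an algebra automorphism of the super tensor product and recording $\tJ_i^2=1$ and the commutation of $\tJ_i$ with $E_i,F_i$), but the architecture is identical.
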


\begin{proof}
First, let us assume all the equalities but ${}^\tau\Delta_3^{\omega^{-1}}=\bar{\Delta}_1^{\omega^{-1}}$.
Then we see that 
\[{}^\tau\Delta_3^{\omega}=\tau\circ\omega\otimes \omega\circ\tau\circ\bar{\phantom{x}}\circ\Delta_1\circ\bar{\phantom{x}}\circ\omega^{-1}.\]
Since $\tau$ is an involution commuting with $\bar{\phantom{x}}$ and $\omega\otimes \omega$,
we see the desired equality holds.

It remains to check the other equalities.
Since all of the maps on the right-hand side are compositions of algebra homomorphisms, 
we may simply check on generators. The lemma is clear for $K_\mu$ and $J_\mu$,
so it suffices to check the equalities on $E_i$ and $F_i$.

We compute
\begin{align*}
\Delta_1^{\omega}(E_i)
&=E_i\otimes \tK_i + 1\otimes E_i,
&\Delta_1^\omega(F_i)
&=F_i\otimes \tJ_i + \tK_{-i} \otimes F_i,
\\
\bar{\Delta}_1(E_i)
&=E_i\otimes 1 + \tK_{-i}\otimes E_i,
&\bar \Delta_1(F_i)
&=F_i\otimes \tJ_i \tK_i + 1 \otimes F_i,
\\
\Delta_3^{\omega^{-1}}(E_i)
&=E_i\otimes \tJ_i + \tK_{-i}\otimes E_i,
&\Delta_3^{\omega^{-1}}(F_i)
&=F_i\otimes \tK_i + 1 \otimes F_i.
\end{align*}
Applying $\tau$ to these identities completes the proof of the lemma.

\end{proof}

We note the following consequence of the lemma.

\begin{lemma}\label{cor:comparing twists}
For any $\UU$-modules $M,M'$, there is a $\UU$-module isomorphism 
\[\tau: {}^{\omega}(M\otimes_4 M')\rightarrow {}^{\omega} M'\otimes {}^{\omega}M\]
given by $\tau(x\otimes y): \pi^{p(y)p(x)}y\otimes x$.
\end{lemma}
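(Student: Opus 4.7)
The plan is to derive this isomorphism as a direct consequence of the coproduct identity $\Delta_4 = {}^\tau \Delta_3^{\omega^{-1}}$ established in Lemma \ref{lem:coprodids}. Unwinding the notation, that identity reads $\Delta_4 \circ \omega = \tau \circ (\omega \otimes \omega) \circ \Delta_3$, so the action of $u \in \UU$ on ${}^\omega(M\otimes_4 M')$ is obtained by applying $\tau \circ (\omega\otimes \omega)\circ \Delta_3(u)$ to $x\otimes y$ using the super tensor product rule $u \cdot (m\otimes m') = \sum \pi^{p(u_{(2)}) p(m)} u_{(1)} m \otimes u_{(2)} m'$.

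Before checking this matches the $\Delta_3$-action on ${}^\omega M' \otimes {}^\omega M$, I would isolate two elementary facts about tensor products for an arbitrary coproduct $\Delta$ on $\UU$, both verified by routine super-sign computations. First, for any parity-preserving algebra automorphism $\phi$ there is a canonical identification ${}^\phi(M \otimes^\Delta M') = {}^\phi M \otimes^{\Delta^\phi} {}^\phi M'$, where $\Delta^\phi = (\phi^{-1}\otimes \phi^{-1})\circ \Delta\circ\phi$. Second, the super-swap $\tau(m\otimes m')=\pi^{p(m)p(m')}m'\otimes m$ is a module isomorphism $M \otimes^\Delta M' \to M' \otimes^{{}^\tau \Delta} M$. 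Applying the first fact with $\phi = \omega$ to the definition $M \otimes_4 M' = M \otimes^{{}^\tau \Delta_3^{\omega^{-1}}} M'$ gives ${}^\omega(M \otimes_4 M') = {}^\omega M \otimes^{({}^\tau \Delta_3^{\omega^{-1}})^\omega} {}^\omega M'$; since $\omega$ is even it commutes with $\tau$, so the inner $\omega^{-1}$ cancels against the outer $\omega$-conjugation and one is left with $({}^\tau \Delta_3^{\omega^{-1}})^\omega = {}^\tau \Delta_3$. Finally, since $\tau^2 = \mathrm{id}$ implies ${}^\tau({}^\tau \Delta_3) = \Delta_3$, the swap from the second fact produces $\tau: {}^\omega M \otimes^{{}^\tau\Delta_3} {}^\omega M' \to {}^\omega M' \otimes_3 {}^\omega M$, which is the claimed isomorphism.

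The main obstacle is disciplined sign bookkeeping: every swap of tensor factors, every commutation of an operator past a vector, and every $\omega$-twist contributes a $\pi^{p(\cdot)p(\cdot)}$ factor that must be tracked through the argument. If the abstract route feels cumbersome, an equivalent self-contained verification is to expand $u\cdot_{{}^\omega(M\otimes_4 M')}(x\otimes y)$ using the formula above, apply $\tau$, and compare to $u\cdot \tau(x\otimes y)$ read off from $\Delta_3$ on ${}^\omega M'\otimes {}^\omega M$; this reduces to the parity congruence $p(u_{(1)})p(u_{(2)}) + p(u_{(1)})p(x) + (p(u_{(2)})+p(x))(p(u_{(1)})+p(y)) \equiv p(u_{(2)})p(y) + p(x)p(y) \pmod 2$, which holds because the $p(u_{(1)})p(u_{(2)})$ and $p(u_{(1)})p(x)$ contributions each occur twice and vanish modulo $2$.
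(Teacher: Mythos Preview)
Your proposal is correct and, at its core, coincides with the paper's proof: both arguments reduce to the coproduct identity $\Delta_4={}^\tau\Delta_3^{\omega^{-1}}$ from Lemma~\ref{lem:coprodids} together with a parity bookkeeping check. The paper simply writes out the direct computation you sketch in your final paragraph: it takes $\Delta_4(\omega(u))=\sum u_1\otimes u_2$, applies it to $x\otimes y$, hits the result with $\tau$, and then recognizes the answer as $\big(\tau\circ(\omega^{-1}\otimes\omega^{-1})\circ\Delta_4\circ\omega\big)(u)$ acting on $\tau(x\otimes y)$, which is $\Delta_3(u)\,\tau(x\otimes y)$ by Lemma~\ref{lem:coprodids}.

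Your first, more structural route---factoring the statement into the two general facts ${}^\phi(M\otimes^\Delta M')={}^\phi M\otimes^{\Delta^\phi}{}^\phi M'$ and $\tau:M\otimes^\Delta M'\to M'\otimes^{{}^\tau\Delta}M$---is a clean repackaging that makes the role of each ingredient transparent and would immediately generalize to any pair of coproducts related by a $\tau$-$\omega$ identity of this shape. The paper's version is shorter on the page but hides these two principles inside one chain of equalities. Either way, the substantive content (the coproduct identity plus the parity congruence you wrote down, which is exactly the sign cancellation implicit in the paper's third displayed line) is the same.
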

\begin{proof}
Let $u\in \UU$ and suppose $\Delta_4(\omega(u))=\sum u_1\otimes u_2$ with $u_1,u_2$ homogeneous.
Then 
\begin{align*}
\tau(\Delta_4(\omega(u)) (x\otimes y))
&=\tau(\sum \pi^{p(x)p(u_2)}u_1 x\otimes u_2 y)\\
&=\sum \pi^{p(x)p(u_2)+p(u_1x)p(u_2y)}\sum  u_2y\otimes u_1x\\
&=(\sum \pi^{p(u_2)p(u_1)} \omega^{-1}(u_2)\otimes \omega^{-1}(u_1)) \tau(x\otimes y)\\
&=\tau\circ(\omega^{-1} \otimes \omega^{-1})\circ \Delta_4(\omega(u)) \tau(x\otimes y)\\
&=\Delta(u)\tau(x\otimes y).
\end{align*}
\end{proof}

\subsection{The Verma identification}

Recall that the Verma module $M(\lambda)$ is isomorphic to $\ff$ as a $\Qvp$-module.
We shall now describe a generalization of this identification to $\dotU$.

Let $\lambda,\lambda'\in X$. As discussed in \S\ref{subsec:tensor dotU}, 
the $\UU$-module $M(\lambda)\otimes {}^\omega M(\lambda')$ is naturally a 
$\dotU$-module. As it will often be convenient, 
we will use the $\Qvp$-module identification 
\[ M(\lambda)\otimes {}^\omega M(\lambda')=\ff\otimes\ff.\]
From the triangular decomposition of $\dotU$, the
elements $b^+b'^-1_{\zeta}$ with $(b,b')\in\cB\times_\pi \cB$ comprise a $\pi$-basis of $\dotU$.
Likewise, the canonical basis of $\U^-\cong \ff$ induces a $\pi$-basis
$\cB\otimes \cB=\set{b\otimes b'\mid (b,b')\in \cB\times_\pi \cB}$ 
on $\ff\otimes \ff$.

If $\zeta=\lambda-\lambda'$, then since a $\pi$-basis is a $\Q(v)$-basis, we have
\begin{equation}\label{eq:+-tensor}
b^+b'^-1_\zeta(1\otimes 1)
=b^+(b'\otimes 1)
=\pi^{p(b')p(b)}
b'\otimes b+\sum c_{b_1',b_1} b_1'\otimes b_1,
\end{equation}
where $c_{b_1',b_1}\in \Q(v)$ are constants, $\nu=|b|$,
and the sum is over $(b_1,b_1')\in\cB\times_\pi \cB$
such that $\height |b_1|<\height |b|$, $\height |b_1'| < \height |b'|$
and $b_1'$ belongs to the $\dotU$-submodule of $M(\lambda)$ generated
by $b'$. Moreover, since $\cB\subset \Uint$, the left-hand side of \eqref{eq:+-tensor}
lies in $\displaystyle _\A\ff\otimes_{\A} {}_\A\ff$ and hence
we have $c_{b_1',b_1}\in \Z[v,v^{-1}]$.

Similarly, from the triangular decomposition of $\dotU$, the
elements $b^-b'^+1_{\zeta}$ comprise a $\pi$-basis of $\dotU$.
If $\zeta=\lambda-\lambda'$, then
\begin{equation}\label{eq:-+tensor}
b^-b'^+1_\zeta(1\otimes 1)
=b^-(1\otimes b')
=b\otimes b'+\sum c_{b_1,b_1'} b_1\otimes b_1',
\end{equation}
where $c'_{b_1,b_1'}\in \Z[v, v^{-1}]$ are constants and the sum is over $b_1,b_1'\in \cB\times_\pi \cB$
such that $\height |b_1|<\height |b|$, $\height |b_1'| < \height |b'|$
and $b_1'$ belongs to the $\dotU$-submodule of 
${}^\omega M(\lambda')$ generated by $b'$.

In either case, note that the transition matrix between the $\pi$-basis
$\cB\otimes \cB$ and the elements $\set{b^-b'^+1_\zeta(1\otimes 1)\mid b,b'\in \cB\times_\pi \cB}$ 
is upper unitriangular with entries in $\Z[v,v^{-1}]$, 
hence the latter is also a $\pi$-basis.
In particular, we see that the $\Qvp$-linear map 
\begin{equation}
\eth_{\lambda,\lambda'}:\dotU 1_{\lambda-\lambda'}\rightarrow M(\lambda)\otimes {}^\omega M(\lambda'),
\quad u\mapsto u(1\otimes 1)
\end{equation}
is an isomorphism. Similarly, the $\A$-linear map

\begin{equation}
{}_\A\eth_{\lambda,\lambda'}:\dotUint 1_{\lambda-\lambda'}\rightarrow
{}_\A M(\lambda)\otimes_\A {}^\omega_\A M(\lambda'),
\quad u\mapsto u(1\otimes 1)
\end{equation}
is an isomorphism.

\subsection{The family $\mathcal F$}\label{subsec:dotU and N}

The following particular family of $\UU$-modules will be of critical importance later on.
Let $\mathcal F=\set{N(\lambda,\lambda')\mid \lambda,\lambda'\in X^+}$, where
\[
N(\lambda,\lambda')=V(\lambda)\otimes_{\Qqp} {}^\omega V(\lambda'),\quad 
{}_\A N(\lambda,\lambda')={}_\A V(\lambda)\otimes_{\A}({}_\A^{\omega} V(\lambda')).
\]
Denote by $N_s(\lambda,\lambda')$ (resp. ${}_\A N_s(\lambda,\lambda')$) the $\UU$-module (resp. $\Uint$-module) obtained by
acting on $N(\lambda,\lambda')$ (resp. ${}_\A N(\lambda,\lambda')$) via $\Delta_s$. 
When there can be no confusion, we will shorten notation to $N(\lambda,\lambda')=N_3(\lambda,\lambda')$
and ${}_\A N(\lambda,\lambda')={}_\A N_3(\lambda,\lambda')$.

The module structures on $N_3(\lambda,\lambda')$ and $N_4(\lambda,\lambda')$
are quite closely related, as demonstrated by the following lemma.

\begin{lemma}\label{lem:N34iso}
The linear isomorphism $N_3(\lambda,\lambda')\rightarrow N_4(\lambda,\lambda')$
given by 
\[
x\otimes y\mapsto \pi^{\ang{\tilde\nu,\lambda}} x\otimes y\quad \text{ for all }x\in V(\lambda),\ y\in {}^\omega V(\lambda')_{\nu'-\lambda'},
\ \nu\in \N[I],
\]
is a $\UU$-module isomorphism.
\end{lemma}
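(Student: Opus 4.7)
The plan is a direct verification that $\phi(x\otimes y):=\pi^{\ang{\tilde\nu,\lambda}}x\otimes y$ (for $y\in{}^\omega V(\lambda')_{\nu'-\lambda'}$, $\nu\in\N[I]$) intertwines the $\Delta_3$- and $\Delta_4$-actions of $\UU$. Since $\phi$ acts as a unit scalar on each weight subspace it is automatically a $\Qvp$-linear bijection; and since $\Delta_3$ and $\Delta_4$ agree on $K_\mu$ and $J_\mu$, compatibility on these generators is immediate. It therefore suffices to check $\phi\circ\Delta_3(u)=\Delta_4(u)\circ\phi$ for $u=E_i$ and $u=F_i$ with $i\in I$.

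For $E_i$, the two coproducts differ only by an extra $\tJ_i$ inserted in the first tensor slot of the ``new'' summand, i.e.\ $\Delta_4(E_i)-\Delta_3(E_i)=(\tJ_i-1)\otimes E_i$. Expanding both $\phi\Delta_3(E_i)(x\otimes y)$ and $\Delta_4(E_i)\phi(x\otimes y)$, the pieces involving $E_ix\otimes\tK_{-i}y$ leave $\nu$ fixed and match up trivially via the common factor $\pi^{\ang{\tilde\nu,\lambda}}$. The pieces involving $x\otimes E_iy$ shift the parameter $\nu$ to $\nu+i$ (since $E_i$ acts on ${}^\omega V(\lambda')$ as $F_i$ on $V(\lambda')$, lowering the underlying weight by $i'$), so that equality reduces to
\[
\pi^{\ang{\widetilde{\nu+i},\lambda}}=\pi^{\ang{\tilde\nu,\lambda}}\pi^{\ang{\tilde i,|x|}},
\]
that is, to $\pi^{\ang{\tilde i,\lambda-|x|}}=1$. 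The $F_i$-verification is entirely parallel: the discrepancy between $\tJ_i\tK_i$ and $\tK_i$ in the second summands, together with the shift $\nu\mapsto \nu-i$, yields the mirror identity $\pi^{\ang{\tilde i,|x|-\lambda}}=1$.

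The key observation is that these two identities are forced by the bar-consistent anisotropic hypothesis. Since $x\in V(\lambda)$ has weight $|x|=\lambda-\mu'$ for some $\mu\in\N[I]$, one computes
\[
\ang{\tilde i,\lambda-|x|}=\ang{\tilde i,\mu'}=\sum_{j\in I}\mu_j(i\cdot j),
\]
and the condition $i\cdot j\in 2\Z$ recorded in \eqref{eq:even} makes this exponent an even integer. Hence the required $\pi$-identities hold, and no further auxiliary input (e.g.\ a quasi-$\mathcal R$-matrix or Lemma \ref{lem:coprodids}) is needed. The only delicate point is bookkeeping: keeping track of how the $\omega$-weight parameter $\nu$ of $y$ shifts under $E_i$ and $F_i$ and distinguishing it from the (fixed) highest weight $\lambda$ in the exponent of $\phi$.
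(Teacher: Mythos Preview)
Your proof is correct and is precisely the ``elementary verification using the definitions'' that the paper invokes without detail. You have correctly identified that the only nontrivial checks are for $E_i$ and $F_i$, that both reduce to the identity $\pi^{\ang{\tilde i,\lambda-|x|}}=1$, and that this follows from $i\cdot j\in 2\Z$ (equation~\eqref{eq:even}) since $\ang{\tilde i,\mu'}=\sum_j\mu_j(i\cdot j)$ for $|x|=\lambda-\mu'$.
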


\begin{proof}
This is elementary to verify using the definitions.
\end{proof}
Now let us examine the relationship between $N(\lambda,\lambda')$ and $M(\lambda)\otimes {}^\omega M(\lambda')$.
Let $\zeta\in X$ and let $a=\sum_i a_i i, a'=\sum_i a'_i i\in \N[I]$
such that $\ang{i,\zeta}=a_i'-a_i$ for all $i\in I$.
We define the ideals 
\[
P(\zeta,a,a')=\sum_{i,n>a_i} \dotU F_i^{(n)} 1_\zeta+
\sum_{i,n>a_i'} \dotU E_i^{(n)} 1_\zeta,
\] 
\[
{}_\A P(\zeta,a,a')=\sum_{i,n>a_i} \dotUint F_i^{(n)} 1_\zeta+
\sum_{i,n>a_i'} \dotUint E_i^{(n)} 1_\zeta.
\]

Let $\lambda,\eta\in X^+$
and set $a_i=\ang{i,\lambda}$, $a_i'=\ang{i,\eta}$ for all $i\in I$.
Set $\xi=\eta-\lambda$, $a=\sum_i a_i i\in \N[I]$, 
and $a'=\sum_i a_i'\in \N[I]$. Let $\mathcal T$ (resp. $\mathcal T'$)
be the kernel of the canonical homomorphism of $\UU$-modules
$\ff=M(\lambda)\rightarrow  V(\lambda)$
(resp. $\ff={}^\omega M(\lambda')\rightarrow  {}^\omega V(\lambda')$). Then by 
Theorem \ref{thm:canonicalbasis} (4) and Lemma \ref{lem:extra CB props} (4), $\mathcal T$ 
(resp. $\mathcal T'$) is generated by $b\in\cB$ such that $b\in\fint \theta_i^{(n)}$
for some $n\geq a_i$ (resp. $n\geq a_i'$).

Then taking tensor products, we obtain the surjective homomorphism
\[
M(\lambda)\otimes {}^\omega M(\lambda')\rightarrow 
 N(\lambda,\lambda').
\]
The kernel of this map is the subspace 
$\mathcal T\otimes \ff+\ff\otimes\mathcal T'$. 

Let $\mathcal D(\lambda)=\cB\setminus \cB(\lambda)$.
Now by the triangular decomposition, the description of $\mathcal T$, and \eqref{eq:+-tensor},
$\eth_{\lambda,\lambda'}$ maps the subspace 
\[\sum_{ b\in D(\lambda), b'\in\cB} \Qvp b'^+b^-1_\zeta=\sum_{i,n>a_i} \dotU F_i^{(n)} 1_\zeta\]
onto the subspace
\[\sum_{b\in D(\lambda), b'\in \cB} \Qvp b\otimes b'=\mathcal T\otimes\ff.\]
Similarly by the triangular decomposition, the description of $\mathcal T'$, and \eqref{eq:-+tensor},
$\eth_{\lambda,\lambda'}$ maps the subspace 
\[\sum_{b\in\cB, b'\in D(\lambda')} \Qvp b^-b'^+1_\zeta=\sum_{i,n>a_i} \dotU E_i^{(n)} 1_\zeta\]
onto the subspace
\[\sum_{b\in \cB,b'\in D(\lambda')} \Qvp b\otimes b'=\ff\otimes\mathcal T'.\]
We note that replacing everything with the integral form in the preceding argument
does not effect the argument, and thus we have proven the following.

\begin{proposition}\label{prop:dotU onto N}
The map $u\mapsto u(\eta_\lambda\otimes \xi_{-\lambda'})$ defines a surjective linear map
$\dotU\rightarrow N(\lambda,\lambda')$ with kernel equal to $P(\zeta,a,a')$. Moreover,
restriction of this map to $\dotUint$ gives a surjective linear map $\dotUint\rightarrow {}_\A N(\lambda,\lambda')$
with kernel equal to ${}_\A P(\zeta,a,a')$.
\end{proposition}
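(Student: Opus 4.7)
The plan is to obtain this proposition essentially by assembling the pieces already laid out in the paragraphs leading up to the statement; no new technical input is needed beyond a careful composition of maps.

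First I would note that, since $\eta_\lambda \otimes \xi_{-\lambda'}$ is a weight vector of weight $\zeta = \lambda - \lambda'$, we have $u(\eta_\lambda \otimes \xi_{-\lambda'}) = (u 1_\zeta)(\eta_\lambda \otimes \xi_{-\lambda'})$ for any $u \in \dotU$. Consequently the map in question factors through the projection $\dotU \twoheadrightarrow \dotU 1_\zeta$, and its analysis reduces to understanding the composition
\[
\dotU 1_\zeta \xrightarrow{\;\eth_{\lambda,\lambda'}\;} M(\lambda)\otimes{}^\omega M(\lambda') \xrightarrow{\;\pi\otimes\pi'\;} V(\lambda)\otimes {}^\omega V(\lambda') = N(\lambda,\lambda'),
\]
where $\pi, \pi'$ are the canonical surjections onto the respective irreducibles and the second arrow is their tensor product (which is a morphism of $\UU$-modules for the coproduct $\Delta_3$ since $\pi \otimes \pi'$ is).

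Next I would exploit the fact that $\eth_{\lambda,\lambda'}$ is already known to be a $\Qvp$-linear isomorphism, while $\pi \otimes \pi'$ is surjective with kernel $\mathcal T \otimes \ff + \ff \otimes \mathcal T'$ (the tensor product of two surjections). Surjectivity of the total map is therefore immediate. To identify the kernel, I would invoke the computation performed in the text just before the statement: combining the triangular-decomposition $\pi$-bases $\{b^+b'^- 1_\zeta\}$ and $\{b^-b'^+ 1_\zeta\}$ with Lemma \ref{lem:extra CB props}(4), one has
\[
\eth_{\lambda,\lambda'}\bigl(\sum_{i,n>a_i}\dotU F_i^{(n)}1_\zeta\bigr) = \mathcal T \otimes \ff, \qquad
\eth_{\lambda,\lambda'}\bigl(\sum_{i,n>a_i'}\dotU E_i^{(n)}1_\zeta\bigr) = \ff \otimes \mathcal T'.
\]
Summing these and using that $\eth_{\lambda,\lambda'}$ is bijective identifies the kernel of the composition with $P(\zeta,a,a')$.

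For the integral statement, I would repeat exactly the same argument working inside $\dotUint 1_\zeta$ and $_\A M(\lambda) \otimes _\A ^\omega M(\lambda')$. The key points that must be invoked are: (i) $_\A\eth_{\lambda,\lambda'}$ is an $\A$-module isomorphism (already established); (ii) the canonical basis $\cB$ lies in $\fint$ and the transition coefficients in \eqref{eq:+-tensor} and \eqref{eq:-+tensor} lie in $\Z[v,v^{-1}]$, so the $\pi$-bases used in the inverse-image computation are $\pi$-bases of the respective integral forms; and (iii) the quotient map $_\A M(\lambda) \twoheadrightarrow {}_\A V(\lambda)$ is surjective with kernel compatible with $\cB$ in the sense of Theorem \ref{thm:canonicalbasis}(4). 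Granting (i)--(iii), the same surjection-and-kernel identification goes through verbatim over $\A$.

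There is no serious obstacle to this plan; the main point requiring a moment of care is making sure that the identifications of preimages established for $\eth_{\lambda,\lambda'}$ in the running discussion are genuinely equalities (not merely inclusions) in both the $\Qvp$-form and the $\A$-form. This is what the triangular decomposition of $\dotU$ (resp.\ $\dotUint$) and the fact that $\{b \otimes b' : (b,b') \in \cB \times_\pi \cB\}$ is a $\pi$-basis of $\ff \otimes \ff$ (resp.\ $\fint \otimes_\A \fint$) together guarantee, via an upper-unitriangular change-of-basis argument with coefficients in $\Z[v,v^{-1}]$.
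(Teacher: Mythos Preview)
Your proposal is correct and follows exactly the approach the paper takes: the proof is precisely the content of the paragraphs preceding the proposition, which you have accurately summarized as the composition $\dotU 1_\zeta \xrightarrow{\eth_{\lambda,\lambda'}} M(\lambda)\otimes{}^\omega M(\lambda') \twoheadrightarrow N(\lambda,\lambda')$ together with the identification of $\eth_{\lambda,\lambda'}^{-1}(\mathcal T\otimes\ff + \ff\otimes\mathcal T')$ via \eqref{eq:+-tensor}, \eqref{eq:-+tensor}, and Lemma~\ref{lem:extra CB props}(4). Your explicit handling of the factorization through $\dotU 1_\zeta$ is a welcome clarification, since the proposition as stated literally has domain $\dotU$ while $P(\zeta,a,a')\subset \dotU 1_\zeta$.
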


We now note that each module in $\mathcal F$ comes equipped with a $\pi$-basis.
Indeed, the elements 
\[\cB(\lambda,\lambda')=\set{b^-\eta_{\lambda}\otimes b^+\xi_{-\lambda'}:b\times b'\in B(\lambda)\times_\pi B(\lambda')}\]
form a $\pi$-basis of $N(\lambda,\lambda')$.
They generate a $\Zpv$-submodule $\cL=\cL(\lambda,\lambda')$ and a $\A$-submodule ${}_\A \cL$.

\section{Canonical bases}

In this section, we will construct bar-invariant canonical bases for the modules
in $\mathcal F$ and show that they are mutually compatible with respect
to a family of maps. This enables us to construct a ``projective limit''
of these canonical bases, and this limit is a canonical basis for $\dotU$.

\subsection{Bar involution for $\mathcal F$}\label{subsec:R-matrix integrality}

Recall the quasi-R-matrix  $\Theta=\sum_{\nu\in \N[I]} \Theta_\nu$ 
from Proposition \ref{prop:otherRmats}. We recall in particular that $\Theta$
lives in a suitable completion of $\UU\otimes \UU$, and that
(in this completion) we have
\[\Delta(u)\Theta=\Theta\bar \Delta(u),\quad \Theta\bar\Theta=1.\]
where $\bar{\phantom{x}}:\UU\otimes \UU\rightarrow\UU\otimes \UU$ is the map
$\bar{x\otimes y}=\bar x\otimes \bar{y}$.

Let $M$ and $M'$ be weight modules such that ${}^\omega M\in \catO$ or $M'\in \catO$.
Then since $\Theta_\nu\in \U^+_\nu\otimes\U_\nu^-$, given any $x\in M\otimes M'$,
we must have $\Theta_\nu x=0$ for all but a finite number of $\nu\in \N[I]$. 
Then by regarding $M\otimes M'$ as a $\UU\otimes \UU$-module,
$\Theta$ defines a linear map $\Theta: M\otimes M'\rightarrow M\otimes M'$ by
$m\otimes m'\mapsto \sum \Theta_\nu (m\otimes m')$. This is well-defined because
only finitely many terms may be non-zero, and we see that
\[\Delta(u)\Theta(m\otimes m')=\Theta(\bar{\Delta(\bar u)}m\otimes m').\]
In particular, suppose that $M$ and $M'$ are equipped with bar-involutions
$\bar{\phantom{x}}:M\rightarrow M$ and $\bar{\phantom{x}}:M'\rightarrow M'$
such that $\bar{um}=\bar{u}\bar{m}$ and $\bar{um'}=\bar{u}\bar{m'}$
for all $u\in \UU$, $m\in M$ and $m'\in M'$.
Then 
\[\Delta(u)\Theta(m\otimes m')=\Theta(\bar{\Delta(\bar u)\bar m\otimes \bar m'}),\]
where $\bar{\phantom{x}}=\bar{\phantom x} \otimes \bar{\phantom x}
:M\otimes M'\rightarrow M\otimes M'$.

\begin{theorem}
Let $\lambda,\lambda'\in X$ and consider the Verma modules $M(\lambda)$ 
and $M(\lambda')$. Set $M=M(\lambda)\in \catO$ and
$M'={}^\omega M(\lambda')\in \mathcal C$. Then $\Theta$ is a well defined map on $M\otimes M'$
which leaves stable the $\A$-submodule ${}_\A M(\lambda)\otimes_\A({}^\omega_\A M(\lambda'))$.
\end{theorem}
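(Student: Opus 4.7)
The plan is to establish the two assertions in sequence: first, that $\Theta=\sum_\nu \Theta_\nu$ defines a well-defined $\Qvp$-linear endomorphism of $M\otimes M'$ by $m\otimes m' \mapsto \sum_\nu \Theta_\nu(m\otimes m')$; and second, that this endomorphism stabilizes ${}_\A M(\lambda)\otimes_\A({}^\omega_\A M(\lambda'))$.

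For well-definedness, I would fix a homogeneous simple tensor $m\otimes m'$ with $m\in M_{\lambda-\mu}$. Since $\Theta_\nu\in\UU^+_\nu\otimes\UU^-_\nu$, the subspace $\UU^+_\nu m$ sits in $M_{\lambda-\mu+\nu}$, which vanishes unless $\mu-\nu\in\N[I]$, because $M=M(\lambda)$ has weights contained in $\lambda-\N[I]$. Only finitely many $\nu\in\N[I]$ satisfy this constraint, so the formal sum $\sum_\nu\Theta_\nu(m\otimes m')$ reduces to a finite sum on each simple tensor, and linear extension yields the desired endomorphism. Note that the second tensor factor $m'$ plays no role in this argument; this reflects the fact that only one of the two hypotheses (${}^\omega M\in\catO$ or $M'\in\catO$) needs to hold for $\Theta$ to make sense.

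For the integrality claim, my approach is to establish the stronger statement that each $\Theta_\nu$ already lies in $\Uint^+_\nu\otimes_\A\Uint^-_\nu$ as an element of $\UU\otimes\UU$. Granted this, the stability is immediate: $\Uint^+$ stabilizes $\fint={}_\A M(\lambda)$ via the divided-power Chevalley relations (the final assertion of Lemma \ref{lem:dotU commutation relations}, together with an induction applying $E_i^{(n)}$ to the highest-weight vector using the first identity), while $\Uint^-$ stabilizes ${}^\omega_\A M(\lambda')$ by construction. To obtain integrality of $\Theta_\nu$ itself, I would invoke the formula expressing $\Theta_\nu$ as $\sum_b b^+\otimes (b^\vee)^-$, where $b$ ranges over $\cB_\nu$ and $\{b^\vee\}$ is the dual $\pi$-basis of $\ff_\nu$ under the Kashiwara--Lusztig bilinear form. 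By the $\pi$-almost-orthonormality recorded in Lemma \ref{lem:extra CB props}(2), the transition matrix between $\{b^\vee\}$ and $\cB_\nu$ is upper unitriangular with entries in $v\Zpv$, so each $b^\vee$ lies in the $\A$-span of $\cB_\nu$, and hence $\Theta_\nu$ expands as an $\A$-linear combination of tensors $b^+\otimes(b')^-$ with $b,b'\in\cB_\nu$.

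The main obstacle is establishing integrality of $\Theta_\nu$ in the present covering setting; this is the analogue of \cite[Proposition 24.1.2]{L93}. The structural argument via $\pi$-almost-orthonormality of $\cB$ should transfer without essential difficulty, but care is needed to track the $\pi$-signs that enter through the Kashiwara-type pairing on $\fint$ and through the specific coproduct $\Delta_3$ used here (which differs from the coproducts emphasized in \cite{CHW1,CHW2}). An alternative would be to argue inductively on $\height(\nu)$ directly from the recurrence $\Delta(u)\Theta=\Theta\bar\Delta(u)$, verifying that each step of the recursion preserves integrality; this is conceptually cleaner but computationally heavier, requiring explicit bookkeeping with the divided-power Chevalley commutators.
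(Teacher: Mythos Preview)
Your well-definedness argument is fine and matches the paper's implicit reasoning. For integrality, however, you take a different route from the paper, and the argument as written has a gap.

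You propose to prove the stronger statement $\Theta_\nu\in\Uint^+\otimes_\A\Uint^-$ directly, via a dual-basis formula and $\pi$-almost-orthonormality. The difficulty is that Lemma~\ref{lem:extra CB props}(2) concerns the polarization on $V(\lambda)$, where the values lie in $\A$ by part~(1). On $\ff$ itself the bilinear form takes values only in $\Zp[[v]]\cap\Qvp$; already $(\theta_i,\theta_i)$ is the non-Laurent element $(1-\pi_iv_i^2)^{-1}$ (compare the computations at the end of \S5.2). Thus the Gram matrix of $\cB_\nu$ is congruent to a $\pi$-diagonal matrix modulo $v$ and is invertible over $\Zp[[v]]$, but there is no reason for its inverse to have entries in $\A$, and the dual elements $b^\vee$ generally do not lie in $\fint$. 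In Lusztig's treatment the integrality of $\Theta_\nu$ (\cite[24.1.6]{L93}) is in fact \emph{deduced from} the Verma-tensor statement, so your proposed reduction runs the logic in reverse.

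The paper instead argues as follows, using only results already in hand. The integral Verma identification ${}_\A\eth_{\lambda,\lambda'}:\dotUint 1_{\lambda-\lambda'}\to{}_\A M(\lambda)\otimes_\A{}^\omega_\A M(\lambda')$ is an isomorphism, $\fint$ is bar-stable, $\Theta(1\otimes1)=1\otimes1$, and $\Delta(u)\Theta=\Theta\bar\Delta(u)$. Given $x$ in the integral lattice, set $x'=\bar x$ (still integral), choose $u'\in\dotUint 1_{\lambda-\lambda'}$ with $u'(1\otimes1)=x'$, and put $u=\bar{u'}\in\dotUint$. The intertwining identity then gives $\Theta(x)=\Theta(\bar{\bar u(1\otimes1)})=u(1\otimes1)$, which is manifestly in the integral lattice. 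No information about the internal structure of $\Theta_\nu$ is required.
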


\begin{proof} 

The proof of this result is essentially the same as in \cite[Prop 24.1.4]{L93},
but we shall state it here for completeness.

Since the ambient space of $ M(\lambda)$ and $M(\lambda')$
is $\ff$, there are well defined maps $\bar{\phantom x}:M\rightarrow M$
and $\bar{\phantom x}: M'\rightarrow M'$. On the other hand, $M(\lambda)$
and ${}^\omega M(\lambda')$ may be identified as $\UU$-modules with certain
quotients of $\UU$ such that the bar involution on $\UU$ induces those on
$M$ and $M'$, whence $\bar{um}=\bar{u}\bar{m}$ and $\bar{um'}=\bar{u}\bar{m'}$
for $u\in \UU$, $m\in M$ and $m'\in M'$. Then we set 
$\bar{\phantom x}=\bar{\phantom x}\otimes \bar{\phantom x}: 
M\otimes M'\rightarrow M\otimes M'$.

Now let us identify $M(\lambda)$ and ${}^\omega M(\lambda')$ with $\ff$.
We note that by definition, $\bar{1}=1$ in $M$ and $M'$.
Moreover, $\Theta(1\otimes 1)=1\otimes 1$. Then we have
\[u(1\otimes 1)=\Theta(\bar{\bar u( 1\otimes 1)}).\]

Since the ambient space of ${}_\A M(\lambda)$
and ${}^\omega_\A M(\lambda')$ is $\fint$, which is bar-invariant, we see that
${}_\A M(\lambda)\otimes_\A
({}^\omega_\A M(\lambda'))$ is stable under $\bar{\phantom x}$.
Take $x\in  {}_\A M(\lambda)\otimes_\A
({}^\omega_\A M(\lambda'))$, and set $x'=\bar{x}\in {}_\A M(\lambda)\otimes_\A
({}^\omega_\A M(\lambda'))$.

On the other hand, the isomorphism 
$\dotUint 1_{\lambda'-\lambda}\rightarrow {}_{\A}M(\lambda)\otimes_\A
({}^\omega_\A M(\lambda'))$
implies there is a $u'\in \dotUint 1_{\lambda-\lambda'}$ such that
$u'(1\otimes 1)=x'$. There is also a $u=\bar{u'}\in \dotUint 1_{\lambda-\lambda'}$.
Therefore, $x=\bar{x'}=\bar{u'(1\otimes 1)}=\bar{\bar{u}(1\otimes 1)}$,
and so $\Theta(x)=u(1\otimes 1)\in {}_\A M(\lambda)\otimes_\A
({}^\omega_\A M(\lambda'))$.

\end{proof}

This immediately implies the following corollary.

\begin{corollary}\label{cor:ThetaIntegrality}
The map $\Theta$ 
leaves stable the $\A$-submodule ${}_\A N(\lambda,\lambda')$ 
(resp. ${}_\A N'(\lambda,\lambda')$)
of $N(\lambda,\lambda')$ (resp. $N'(\lambda,\lambda')$).
\end{corollary}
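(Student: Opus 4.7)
The plan is to deduce this corollary from the preceding theorem by passing through the natural surjection from the tensor product of Verma modules to the tensor product of simple modules. The key observation is that $\Theta$ lives naturally in a completion of $\U^+\otimes \U^-$, so its well-definedness and integrality on any quotient can be transported from the Verma case provided the quotient map is compatible with the relevant $\UU\otimes \UU$-action.

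First I would set up the canonical $\UU$-module surjections $\pi_\lambda: M(\lambda)\to V(\lambda)$ and $\pi_{\lambda'}: {}^\omega M(\lambda')\to {}^\omega V(\lambda')$, and form $\Pi=\pi_\lambda\otimes\pi_{\lambda'}$. By construction, $\Pi$ maps $M(\lambda)\otimes {}^\omega M(\lambda')$ onto $N(\lambda,\lambda')$, and restricts to a surjection ${}_\A M(\lambda)\otimes_\A {}^\omega_\A M(\lambda')\twoheadrightarrow {}_\A N(\lambda,\lambda')$ on integral forms (since ${}_\A V(\lambda)=\Uint\eta_\lambda$ and ${}_\A^\omega V(\lambda')=\Uint\xi_{-\lambda'}$ by definition).

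Next I would verify the intertwining identity $\Pi\circ \Theta = \Theta\circ \Pi$. Since $\Theta_\nu\in \U^+_\nu\otimes \U^-_\nu$ and $\pi_\lambda$ (resp.\ $\pi_{\lambda'}$) is a $\U^+$-module (resp.\ $\U^-$-module) map, $\Pi$ commutes with the action of each $\Theta_\nu$; the sum defining $\Theta$ is locally finite on both sides because ${}^\omega M(\lambda')$ and ${}^\omega V(\lambda')$ both lie in $\catO$-type categories that make the argument in \S\ref{subsec:R-matrix integrality} apply.

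With this in hand, the corollary is immediate: given $x\in {}_\A N(\lambda,\lambda')$, lift it to $y\in {}_\A M(\lambda)\otimes_\A {}^\omega_\A M(\lambda')$ with $\Pi(y)=x$, then by the preceding theorem $\Theta(y)$ still lies in ${}_\A M(\lambda)\otimes_\A {}^\omega_\A M(\lambda')$, and hence
\[
\Theta(x)=\Theta(\Pi(y))=\Pi(\Theta(y))\in \Pi\bigl({}_\A M(\lambda)\otimes_\A {}^\omega_\A M(\lambda')\bigr)={}_\A N(\lambda,\lambda').
\]
The analogous statement for the parenthetical module (obtained by varying the coproduct or by the isomorphism of Lemma \ref{lem:N34iso}) follows by the same argument, after possibly conjugating $\Theta$ by the explicit intertwiner of Lemma \ref{lem:N34iso}. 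I do not expect a serious obstacle here: the only point requiring care is the compatibility of $\Pi$ with $\Theta_\nu$ under the chosen coproduct, which is automatic because $\Theta_\nu$ acts through $\U^+\otimes \U^-$ and $\Pi$ is tensorial. The heavy lifting has already been done in the preceding theorem.
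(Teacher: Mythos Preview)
Your proposal is correct and is precisely the argument the paper has in mind: the paper gives no explicit proof beyond ``This immediately implies the following corollary,'' and your quotient-by-surjection argument from the Verma case is the intended justification. The only minor overstatement is that you invoke $\pi_\lambda$ as a $\U^+$-module map and $\pi_{\lambda'}$ as a $\U^-$-module map; in fact both are full $\UU$-module maps, which is what makes $\Pi$ commute with each $\Theta_\nu$ without further comment.
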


Let $\bar{\phantom x}:V(\lambda')\rightarrow V(\lambda')$
be the unique $\Qp$-linear involution such that
$\bar{u \eta_{\lambda'}}=\bar u \eta_{\lambda'}$
for all $u\in\UU$; similarly, let
$\bar{\phantom x}:{}^\omega V(\lambda)\rightarrow {}^{\omega}V(\lambda)$
be the unique $\Qp$-linear involution such that
$\bar{u \xi_{-\lambda}}=\bar u \xi_{-\lambda}$
for all $u\in\UU$. Let 
$\bar{\phantom x}=\bar{\phantom x}\otimes \bar{\phantom x}:
N(\lambda,\lambda')\rightarrow 
N(\lambda,\lambda')$.

Since the maps $\Theta,\bar{\phantom x}:
N(\lambda,\lambda')\rightarrow N(\lambda,\lambda')$
are well-defined, let $\Psi=\Theta\circ\bar{\phantom{x}}$.
Then note that
\[\Psi(\Delta(u)m\otimes m')
=\Theta(\bar{\Delta(u)m\otimes m'})
=\Delta(\bar{u}) \Theta(\bar{m}\otimes \bar{m'})
=\Delta(\bar{u})\Psi(m\otimes m'),\]
and that $\Psi^2=1$, from whence we call
$\Psi$ the {\em bar-involution} on $N(\lambda,\lambda')$.

\subsection{General lemma on semi-linear equations}

We will now present an analogue of \cite[\S 24.2]{L93} in the covering setting.

\begin{lemma}\label{lem:semi-linear}
Let $H$ be a set with a partial order $\leq$ such that for any $h\leq h'$ in $H$,
the set $\set{h'':h\leq h''\leq h'}$ is finite. Assume that for each $h\leq h'$
in $H$, there exists an element $r_{h,h'}\in \A$ such that
$r_{h,h}=1$ and 
\[\sum_{h''; h\leq h''\leq h'} \bar r_{h,h''} r_{h'',h'}=\delta_{h,h'}\]
for all $h\leq h'\in H$.

Then there is a unique family of elements $p_{h,h'}\in \Zpv$
defined for all $h\leq h'\in H$ such that $p_{h,h}=1$, $p_{h,h'}\in v\Zpv$
for all $h<h'$ in $H$, and
\[p_{h,h'}=\sum_{h''; h\leq h''\leq h'} \bar p_{h,h''} r_{h'',h'}\]
for all $h\leq h'\in H$.
\end{lemma}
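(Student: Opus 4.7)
The proof follows Lusztig's inductive scheme for the corresponding lemma in \cite[\S 24.2]{L93}, modified only by the fact that in the covering setting the bar involution acts on $v$ by $v\mapsto \pi v^{-1}$ rather than $v\mapsto v^{-1}$.

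Fix $h\in H$ and induct on the cardinality of the interval $[h,h']:=\{h'':h\leq h''\leq h'\}$, which is finite by hypothesis. When $h=h'$ the only option is $p_{h,h}=1$, which trivially satisfies the recursion. For the inductive step, assume $p_{h,h''}$ has been uniquely determined for all $h''$ with $[h,h'']\subsetneq [h,h']$. Isolating the $h''=h'$ term (where $r_{h',h'}=1$) rewrites the desired recursion as
\[p_{h,h'} - \bar p_{h,h'} \;=\; y_{h,h'}, \qquad y_{h,h'} := \sum_{h\leq h''<h'} \bar p_{h,h''}\, r_{h'',h'} \;\in\; \A,\]
where $y_{h,h'}$ is now completely determined by the inductive data.

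The key technical step is to verify the bar-antisymmetry $\bar y_{h,h'} = -y_{h,h'}$. For this, I would substitute the inductive identity $\bar p_{h,h''} = \sum_{h\leq h'''\leq h''} p_{h,h'''}\bar r_{h''',h''}$ into $y_{h,h'}$, swap the order of summation, and invoke the hypothesized unitarity relation $\sum_{h'''\leq h''\leq h'}\bar r_{h''',h''} r_{h'',h'}=\delta_{h''',h'}=0$ (valid since $h'''<h'$). After peeling off the $h''=h'$ contribution, the inner sum collapses to $-\bar r_{h''',h'}$, yielding $y_{h,h'} = -\sum_{h\leq h'''<h'} p_{h,h'''}\bar r_{h''',h'} = -\bar y_{h,h'}$.

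Granted antisymmetry, existence and uniqueness of $p_{h,h'}$ reduce to the following elementary claim: every $y\in\A$ with $\bar y = -y$ admits a unique expression $y = p-\bar p$ with $p\in v\Zpv$. Writing $y=\sum_n a_n v^n$ with $a_n\in\Zp$ and using $\bar{v^n} = \pi^n v^{-n}$, the antisymmetry condition forces $a_0=0$ (using that $\Zp$ is $2$-torsion-free as an abelian group) together with $a_{-n} = -\pi^n a_n$ for all $n\neq 0$; the unique solution is then $p := \sum_{n\geq 1} a_n v^n$. Applying this to $y_{h,h'}$ produces the required $p_{h,h'}$ and completes the induction.

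The main obstacle is the antisymmetry verification, which requires the careful double-sum rearrangement above together with the unitarity hypothesis. The covering-specific twist $\bar v = \pi v^{-1}$ in the splitting step is a minor bookkeeping issue that resolves cleanly because the bar involution is $\Zp$-linear; otherwise the argument is formally identical to the classical case.
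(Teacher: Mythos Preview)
Your proof is correct and follows essentially the same approach as the paper: induct along intervals, isolate the recursion as $p_{h,h'}-\bar p_{h,h'}=y_{h,h'}$, verify bar-antisymmetry of $y_{h,h'}$ via the unitarity relation and inductive hypothesis, then split $y$ using $\bar v=\pi v^{-1}$. The only cosmetic differences are that the paper inducts on the maximal chain length $d(h,h')$ rather than interval cardinality, and organizes the antisymmetry computation as a direct evaluation of $q+\bar q$ rather than substituting the barred inductive identity; the underlying double-sum manipulation is the same.
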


\begin{proof}
For $h\leq h'$ in $H$, denote by $d(h,h')$ the maximum length of a chain
$h=h_0<h_1<\ldots <h_\ell=h'\in H$. Note that $d(h,h')<\infty$ by our assumption
on $H$. For any $n\geq 0$, let $P_n$ be the assertion of the lemma restricted
to those $h\leq h'$ such that $d(h,h')\leq n$ (and note that all the assertions
make sense under this restriction. We will prove $P_n$ by induction.

First note $P_0$ is trivial and assume that $n\geq 1$. Let $h\leq h'$.
If $d(h,h')<n$ then $p_{h,h'}$ is defined by $P_{n-1}$. If $d(h,h')=n$,
then $q=\sum_{h''; h\leq h'' < h'} \bar p_{h,h''} r_{h'',h'}$ is defined.

First, we shall show that $q+\bar{q}=0$.
Indeed, using $P_{n-1}$ and the assumptions of the lemma,
\begin{align*}
q+\bar{q}&=\sum_{h_1; h\leq h_0< h'} \bar p_{h,h_0} r_{h_0,h'}
+\sum_{h_1; h\leq h _1 < h'} p_{h,h_1} \bar r_{h_1,h'}
\\&=\sum_{h_0,h_1; h\leq h_0 <h_1=h'} \bar p_{h,h_0} r_{h_0,h_1}\bar r_{h_1,h'}
+\sum_{h_0, h_1; h\leq h_0\leq h_1< h'} \bar p_{h,h_0} r_{h_0,h_1} \bar r_{h_1,h'}\\
&=\sum_{h_0, h_1; h\leq h_0\leq h_1\leq  h';h_0<h'} 
\bar p_{h,h_0} r_{h_0,h_1} \bar r_{h_1,h'}\\
&=\sum_{h_0;h\leq h_0<h'} \bar p_{h,h_0} \sum_{h_1; h_0\leq h_1\leq h'}
\bar r_{h_0,h_1}r_{h_1,h'}\\
&=\sum_{h_0;h\leq h_0<h'} \bar p_{h,h_0} \delta_{h_0,h'}=0.
\end{align*}

Now we claim that since $\bar{q}+q=0$, there is a unique $q'\in v\Zpv$
such that $q'-\bar{q'}=0$. 

Indeed, we can write $q=\sum_{m,n\in \N} a_{n} v^n$
with $a_{n}\in \Z^\pi$ for $n\in \N$. Then since $q+\bar{q}=0$,
we see that $a_{n}=-\pi^n a_{-n}$ for all $m,n\in \N$.
In particular, $a_{0}=0$ and $a_{n}=0$ if and only
if $a_{-n}=0$ for $n\in \Z$.
Then taking $q'=\sum_{n\in \N} a_{n} v^{n}$,
we see that $q=q'-\bar{q'}$ and $q'\in v\Zpv$. In particular, we can set $p_{h,h'}=q'$,
and then
\[p_{h,h'}=q+\bar{q'}=\sum_{h''; h\leq h'' < h'} \bar p_{h,h''} r_{h'',h'}+\bar{p_{h,h'}}
=\sum_{h''; h\leq h'' \leq h'} \bar p_{h,h''} r_{h'',h'}.\]
\end{proof}

\begin{remark}
For $R$ a commutative ring, we can define a bar-involution on $A=R[x,y]$ by $\bar{f(x,y)}=f(y,x)$.
This involutions descends to a bar-involution on $A_r=R[x,y]/(xy-r)R[x,y]\cong R[x,rx^{-1}]$ for any $r\in R$.
Then the assertions of the lemma apply with $\A$ replaced by $A$ or $A_r$, 
$v$ replaced with $x$, and with $\Zpv$ replaced everywhere by $R[x]$.
In particular, Lemma \ref{lem:semi-linear} (respectively, \cite[\S 24.2]{L93}) is a special case
for $R=\Z^\pi$ and $r=\pi$  (respectively, $R=\Z$ and $r=1$).
\end{remark}

\subsection{The canonical basis of $N(\lambda,\lambda')$}

Let $\lambda,\lambda'\in X^+$. We shall consider the 
following partial order on the set $\cB\times_\pi \cB$:
we say that $(b_1,b_1')\leq (b_2,b_2')$ if 
\[\height |b_1|-\height |b_1'|=\height |b_2|-\height |b_2'|\]
and if we have either
\[\height|b_1|<\height |b_2|\text{ and }\height|b_1'|<\height |b_2'|,\]
or
\[(b_1,b_2)\sim (b_1',b_2').\]
Note that, in particular, $(b_1,b_1')$ and $(b_1,\pi b_1')$ are not comparable
under $\leq$.
For given $\lambda,\lambda'\in X^+$, this induces a partial order
on the set $\cB(\lambda)\times_\pi \cB(\lambda')$.

Then from the definition, we have that
for all $(b_1,b_1')\in B(\lambda)\times_\pi B(\lambda')$,
\[\Psi(b_1^- \eta_{\lambda}\otimes b_1'^+\xi_{-\lambda'})
=\sum_{(b_2,b_2')\in B(\lambda)\times_\pi B(\lambda')}
r_{b_1,b_1';b_2,b_2'} b_2^-\eta_{\lambda}\otimes b_2'^+\xi_{-\lambda'},
\]
where $r_{b_1,b_1';b_2,b_2'}\in \Z[v^{\pm 1}]$
and $r_{b_1, b_1';b_2,b_2'}=0$ unless $(b_1,b_1')\geq (b_2,b_2')$;
in particular, the sum is finite.

Moreover, we note that $r_{b_1,b_1';b_1,b_1'}=1$,
and from $\Psi^2=1$ we see that
\[
\sum_{(b_1,b_1');(b_2,b_2');(b_3,b_3')\in B(\lambda)\times_\pi B(\lambda')}
\bar r_{b_1,b_1';b_2,b_2'} r_{b_2,b_2';b_3,b_3'}
=\delta_{(b_1,b_1');(b_3,b_3')}.
\]

Then $H=B(\lambda)\times_\pi B(\lambda')$ and $r_{b_1,b_1';b_2,b_2'}$
satisfy the assumptions of Lemma \ref{lem:semi-linear}, 
so there exist elements $p_{b_1,b_1';b_2,b_2'}\in \Zpv$ such that
\[p_{b_1,b_1';b_1,b_1'}=1,\]
\[p_{b_1,b_1';b_2,b_2'}=0\text{ unless }(b_2,b_2')\leq (b_1,b_1'),\]
\[p_{b_1,b_1';b_2,b_2'}\in v\Zpv\text{ for } (b_2,b_2')<(b_1,b_1'),\]
\[
p_{b_1,b_1';b_3,b_3'}=
\sum_{(b_2,b_2')\in B(\lambda)\times_\pi B(\lambda')} 
\bar p_{b_1,b_1';b_2,b_2'} r_{b_2,b_2';b_3,b_3'}.
\]

Now recall from Section \ref{subsec:dotU and N} that $\cL$ (resp. $_\A\cL$) is the $\Zpv$-lattice
(resp. $\A$-lattice) with basis $\cB(\lambda,\lambda')=\set{b^+\xi_{-\lambda}\otimes b^-\eta_{\lambda'}:b\times b'\in B(\lambda)\times_\pi B(\lambda')}$.

\begin{theorem}\label{thm:CBforN}
\begin{enumerate}
\item For any $(b,b')\in B(\lambda)\times_\pi B(\lambda')$,
there is a unique element $(b\diamondsuit b')_{\lambda,\lambda'}$ of the $\UU$-module $N(\lambda,\lambda')$
such that 
\[
\Psi((b\diamondsuit b')_{\lambda,\lambda'})=(b\diamondsuit b')_{\lambda,\lambda'}
\text{ and }
(b\diamondsuit b')_{\lambda,\lambda'}-b^-\eta_{-\lambda}\otimes b'^+ \xi_{-\lambda'}\in v\cL.
\]
\item We have $(\pi b\diamondsuit b')_{\lambda,\lambda'}=(b\diamondsuit \pi b')_{\lambda,\lambda'}=
\pi (b\diamondsuit b')_{\lambda,\lambda'}$
\item $(b\diamondsuit b')_{\lambda,\lambda'}$ is equal to
$b^-\eta_{\lambda}\otimes b'^+ \xi_{-\lambda'}$ plus
a $v\Zpv$-linear combination of elements 
$b_1^-\eta_{\lambda}\otimes b_1'^+ \xi_{-\lambda'}$
with $(b_1,b_1')<(b,b')$.
\item The elements $(b\diamondsuit b')_{\lambda,\lambda'}$ form
a $\pi$-basis of $\cL$, ${}_\A\cL$, and $N(\lambda,\lambda')$.
\item The natural homomorphism $\cL\cap \Psi(\cL)\rightarrow \cL/v\cL$
is an isomorphism.
\end{enumerate}
\end{theorem}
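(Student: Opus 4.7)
The plan follows Lusztig's paradigm for constructing canonical bases on tensor products (cf. \cite[\S 24.3]{L93}), adapted to the covering setting via the semi-linear lemma. I would first verify that the matrix entries $(r_{b_1,b_1';b_2,b_2'})$ of $\Psi$ in the $\pi$-basis $\cB(\lambda,\lambda')$ satisfy the hypotheses of Lemma \ref{lem:semi-linear}. Two facts must be checked: $r_{b,b';b,b'}=1$, which is immediate from $\Theta_0=1\otimes 1$ together with the bar-invariance of $b^-\eta_\lambda\otimes b'^+\xi_{-\lambda'}$ (Theorem \ref{thm:canonicalbasis}(3) combined with the bar-invariance of $\eta_\lambda$ and $\xi_{-\lambda'}$); and the identity $\sum \bar r\,r = \delta$, which encodes $\Psi^2=1$ and follows from $\Theta\bar\Theta=1$ (Proposition \ref{prop:otherRmats}). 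The finiteness of intervals in the partial order on $\cB(\lambda)\times_\pi\cB(\lambda')$ is automatic from the height constraints.

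Invoking Lemma \ref{lem:semi-linear} then yields elements $p_{b_2,b_2';b,b'}\in\Zpv$, and I set
\[
(b\diamondsuit b')_{\lambda,\lambda'} := \sum_{(b_2,b_2')\leq(b,b')} p_{b_2,b_2';b,b'}\, b_2^-\eta_\lambda\otimes b_2'^+\xi_{-\lambda'}.
\]
The recursion satisfied by the $p$'s translates directly into $\Psi$-fixedness, producing existence in (1). Uniqueness is the standard observation that any $\Psi$-fixed element of $v\cL$ must vanish: semilinearity with $\bar v=\pi v^{-1}$ forces bar-invariants in $v\Zpv$ to be zero, so comparing coefficients of the leading $v^n$-terms yields a contradiction unless the element is $0$. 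Statements (2) and (3) are then immediate corollaries: for (2), since $\pi$ commutes with $\Psi$ and is bar-invariant, $\pi(b\diamondsuit b')_{\lambda,\lambda'}$ satisfies the defining conditions of both $(\pi b\diamondsuit b')_{\lambda,\lambda'}$ and $(b\diamondsuit \pi b')_{\lambda,\lambda'}$, whence uniqueness gives equality; (3) simply restates the explicit form produced by the construction.

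For (4), the transition matrix between the $\pi$-basis $\cB(\lambda,\lambda')$ and the new family $\{(b\diamondsuit b')_{\lambda,\lambda'}\}$ is unitriangular with entries in $\Zpv$, so the latter is likewise a $\pi$-basis of $\cL$ and of $N(\lambda,\lambda')$. Integrality over $\A$ requires the $r$-entries to lie in $\A$, which is exactly the content of Corollary \ref{cor:ThetaIntegrality}; the recursion in Lemma \ref{lem:semi-linear} then keeps the $p$'s in $\Zpv$, giving the ${}_\A\cL$-statement. For (5), surjectivity of the natural map $\cL\cap\Psi(\cL)\to\cL/v\cL$ is witnessed by the canonical basis elements themselves, each of which lies in $\cL\cap\Psi(\cL)$ and reduces modulo $v\cL$ to the corresponding standard basis vector; injectivity is again the uniqueness argument of (1).

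The main obstacle I anticipate is the careful index bookkeeping between the abstract formulation of Lemma \ref{lem:semi-linear} and the concrete partial order on $\cB(\lambda)\times_\pi\cB(\lambda')$: one must be meticulous about the direction of triangularity (the fact that $r_{b_1,b_1';b_2,b_2'}$ vanishes unless $(b_2,b_2')\leq(b_1,b_1')$), and in consistently handling the $\pi$-equivalence $(b,b')\sim(\pi b,\pi b')$ when passing between the set-theoretic indexing and the algebraic manipulations. Once this bookkeeping is organized, the remainder of the argument proceeds along the standard Lusztig framework with only cosmetic changes for the covering setting.
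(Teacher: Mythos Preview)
Your proposal is correct and follows essentially the same approach as the paper's proof: invoke Lemma~\ref{lem:semi-linear} on the transition coefficients $r_{b_1,b_1';b_2,b_2'}$ of $\Psi$, define $(b\diamondsuit b')_{\lambda,\lambda'}$ as the corresponding $p$-weighted sum, read off (3) from the construction and (4) from unitriangularity, then deduce (5) and uniqueness from (4), and finally obtain (2) from uniqueness. Your write-up is somewhat more explicit than the paper's about verifying the hypotheses of the semi-linear lemma and about the direct argument that a $\Psi$-fixed element of $v\cL$ vanishes, but the logical structure is the same.
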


\begin{proof}
By the definition of $p_{b,b';b_1,b_1'}$, we see that 
\[(b\diamondsuit b')_{\lambda,\lambda'}=\sum_{(b_1,b_1')\leq (b,b')}
p_{b,b';b_1,b_1'}b_1^-\eta_{\lambda}\otimes b_1'^+ \xi_{-\lambda'}\]
satisfies the requirements of (1) proving existence, 
and the same considerations prove (3). 
(4) is immediate from the fact that the transition matrix
from the $\Qv$-basis $b_1^-\eta_{\lambda}\otimes b_1'^+ \xi_{-\lambda'}$
is unitriangular with entries in $\Z[v]$. (5) follows from (4) and the observation
that $(b\diamondsuit b')_{\lambda,\lambda'}\in \cL\cap \Psi(\cL)$,
and so the map sends $(b\diamondsuit b')_{\lambda,\lambda'}$ to the
basis element $b^-\eta_{\lambda}\otimes b'^+ \xi_{-\lambda'}$
of $\cL/v\cL$, which also implies uniqueness. Finally, uniqueness
implies (2).
\end{proof}

We call the elements $(b\diamondsuit b')_{\lambda,\lambda'}$
the {\em canonical basis} of $N(\lambda,\lambda')$.

\begin{remark}
We may repeat verbatim \S \ref{subsec:R-matrix integrality}
with respect to $\Theta_4$ to obtain a bar-involution
$\Psi_4$ on $N_4(\lambda,\lambda')$.
Then we see that the results of this section
(and in particular Theorem \ref{thm:CBforN})
may be restated with $\Psi$, $N(\lambda,\lambda')$ replaced by $\Psi_4$, $N_4(\lambda,\lambda')$.
When we need to distinguish them, we denote the canonical basis of $N_s(\lambda,\lambda')$ by 
$(b\diamondsuit_s b')_{\lambda,\lambda'}$.
\end{remark}

\subsection{Cancellation and stability}
Our goal in this subsection is to exhibit maps between modules
in $\mathcal F$ which are compatible with canonical bases.
These maps will correspond to a form of cancellation on the pairs 
$(\lambda,\lambda')\in X^+\times X^+$;
namely, the cancellation 
\[(\lambda+\lambda'',\lambda''+\lambda')\mapsto (\lambda,\lambda')\]
can be realized as a $\UU$-module homomorphism
$N(\lambda+\lambda'',\lambda''+\lambda')\rightarrow N(\lambda,\lambda')$.

\begin{proposition}\label{prop:tensor splitting}
Let $\lambda,\lambda'\in X^+$. Write $\eta=\eta_\lambda$,
$\eta'=\eta_{\lambda'}$, and $\eta''=\eta_{\lambda+\lambda'}$.
\begin{enumerate}
\item There is a unique homomorphism of $\UU$-modules 
$\chi:V(\lambda+\lambda')\rightarrow V(\lambda)\otimes V(\lambda')$
such that $\chi(\eta'')=\eta\otimes \eta'$.
\item Let $b\in B(\lambda + \lambda')$. We have
$\chi(b^-\eta'')=\sum_{b_1,b_2} f(b;b_1,b_2) b_1^-\eta\otimes b_2^-\eta'$
where the sum is over $(b_1,b_2)\in B(\lambda)\times_\pi B(\lambda')$
and $f(b;b_1,b_2)\in \Z[v]$.
\item If $b^-\eta'\neq 0$, then $f(b;b,1)=1$ and $f(b;b_1,1)=0$ for
any $b_1\neq b$. If $b^-\eta'=0$, then $f(b; b_1,1))=0$ for any $b_1$.
\end{enumerate}
\end{proposition}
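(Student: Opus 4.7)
The approach follows the strategy from Lusztig's book \cite[Part IV]{L93}, adapted to the covering setting.

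For part (1), uniqueness is immediate since $V(\lambda+\lambda')$ is cyclic on $\eta''$. For existence, I first verify that $\eta\otimes\eta'\in V(\lambda)\otimes V(\lambda')$ is a highest weight vector of weight $\lambda+\lambda'$: using $\Delta_3(E_i)=E_i\otimes\tK_{-i}+1\otimes E_i$, both summands annihilate $\eta\otimes\eta'$, while multiplicativity of $\Delta_3$ on the Cartan generators gives the weight. This produces a surjection $M(\lambda+\lambda')\twoheadrightarrow\UU\cdot(\eta\otimes\eta')$. To show it factors through $V(\lambda+\lambda')$, I would invoke the integrability of $V(\lambda)\otimes V(\lambda')$ (inherited from its factors) combined with the standard rank-one argument in the covering setting to deduce $F_i^{(\ang{i,\lambda+\lambda'}+1)}(\eta\otimes\eta')=0$, which kills the defining generators of the kernel $M(\lambda+\lambda')\twoheadrightarrow V(\lambda+\lambda')$.

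For part (2), by construction $\chi(b^-\eta'')=b^-(\eta\otimes\eta')$, computed via $\Delta_3$. Since $b\in\cB\subset\fint$ and $\Delta_3$ preserves the integral form $\Uint\otimes_\A\Uint$, the result lies in ${}_\A V(\lambda)\otimes_\A {}_\A V(\lambda')$, so the coefficients $f(b;b_1,b_2)$ lie in $\Zpvvi$ a priori. To sharpen to $\Z[v]$, I would use that $\chi$ intertwines the bar involutions: $\Theta(\eta\otimes\eta')=\eta\otimes\eta'$ (since the positive-weight factors of $\Theta_\nu$ annihilate $\eta$), so $\Psi(\eta\otimes\eta')=\eta\otimes\eta'$, and the intertwining extends to all $b^-\eta''$ by the module homomorphism property. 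Hence $\chi(b^-\eta'')$ is bar-invariant. Combining this with the triangular relationship between the monomial basis $\{b_1^-\eta\otimes b_2^-\eta'\}$ and a canonical basis of $V(\lambda)\otimes V(\lambda')$ (constructed in parallel with Theorem~\ref{thm:CBforN} but with both tensor factors highest weight), together with the fact that the $\tJ_i\tK_i$-scalars arising when the iterated coproduct acts on $\eta$ are non-negative powers of $v$ (since $\lambda\in X^+$), the coefficients are forced into $\Z[v]$.

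For part (3), I project $\chi(b^-\eta'')=b^-(\eta\otimes\eta')$ onto $V(\lambda)\otimes\Qvp\eta'$, which extracts the $b_2=1$ contribution. Expanding $\Delta_3(b^-)$ as a sum over choices of $F_i\otimes 1$ or $\tJ_i\tK_i\otimes F_i$ at each factor, only summands with no $F_j$'s in the right tensor factor can contribute, since $F_j\eta'$ lies in a strictly lower weight space of $V(\lambda')$. Collecting these terms gives exactly $b^-\eta\otimes\eta'$, so $f(b;b_1,1)$ is the coefficient of $b_1^-\eta$ in the canonical-basis expansion of $b^-\eta\in V(\lambda)$. By Lemma~\ref{lem:extra CB props}(4), either $b^-\eta=0$ (yielding $f(b;b_1,1)=0$ for all $b_1$) or $b\in\cB(\lambda)$ and $b^-\eta$ is itself a canonical basis vector of $V(\lambda)$, giving $f(b;b,1)=1$ and $f(b;b_1,1)=0$ for $b_1\neq b$.

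The main obstacle I anticipate is the passage from $\Zpvvi$ to $\Z[v]$ in (2): bar-invariance together with the positivity of the $\tJ_i\tK_i$-weight action on $\eta$ give partial control, but a clean argument appears to require setting up the canonical basis of $V(\lambda)\otimes V(\lambda')$ in parallel with Theorem~\ref{thm:CBforN} and carefully tracking the resulting triangularity; this triangularity-plus-integrality analysis is the delicate technical step.
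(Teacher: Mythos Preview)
Your approach to parts (1) and (3) is correct and essentially matches the paper's; for (3) the paper simply says it ``is immediate from the definition of the coproduct,'' which is exactly your projection argument. (Note that your computation shows the condition should concern $b^-\eta$ rather than $b^-\eta'$; this appears to be a misprint in the statement, and your reasoning is the right one.)

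The real divergence is in part (2). The paper does \emph{not} argue via bar-invariance and a parallel canonical basis on $V(\lambda)\otimes V(\lambda')$. Instead it invokes crystal theory directly: the map $\chi$ is identified with the map $\Phi'(\lambda,\lambda')$ of \cite{CHW2}, and a variant of \cite[Lemma 5.7]{CHW2} shows that $\Phi'(\lambda,\lambda')$ sends the crystal lattice $\cL(\lambda+\lambda')$ into $\cL(\lambda)\otimes_{\Zpv}\cL(\lambda')$. Since $b^-\eta''\in\cL(\lambda+\lambda')$, this gives $f(b;b_1,b_2)\in\Zpv$ in one stroke; absorbing a stray $\pi$ into the choice of representative in $\cB\times_\pi\cB$ reduces to $\Z[v]$.

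Your proposed route has a genuine gap. The claim that ``the $\tJ_i\tK_i$-scalars arising when the iterated coproduct acts on $\eta$ are non-negative powers of $v$'' is not correct: in a monomial expansion of $\Delta_3(F_{i_1}\cdots F_{i_n})$, the $\tJ_i\tK_i$ factors act on \emph{intermediate} weights $\lambda-\nu'$ with $\nu\in\N[I]$, and $\ang{i,\lambda-\nu'}$ can be negative even for $\lambda\in X^+$. So one does not get lattice membership from this observation, and bar-invariance alone cannot compensate: knowing $\Psi(\chi(b^-\eta''))=\chi(b^-\eta'')$ only pins down the element inside $\cL\cap\Psi(\cL)$ \emph{once you already know} it lies in $\cL$. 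In other words, your triangularity argument presupposes exactly the crystal-lattice preservation that the paper imports from \cite{CHW2}. If you want to avoid citing that lemma, you would need to reprove Kashiwara's tensor product rule in the covering setting, which is substantially more work than what you have sketched.
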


\begin{proof}
The map $\chi$ is the same as the map
$\Phi'(\lambda,\lambda')$ as defined in \cite{CHW2} (with reversed conventions on $\Delta,\Delta';\otimes,\otimes'$),
proving (1). A minor variation on the proof of \cite[Lemma 5.7]{CHW2}
shows that $\Phi'(\lambda,\lambda')$ preserves the crystal lattice.
Combining this with the fact that $\cB\subset \fint$, we see that $f(b;b_1,b_2)\in \Zpv$; moreover, 
up to identifying $\pi (b_1^-\eta\otimes b_2^-\eta')$ with $(\pi b_1^-)\eta\otimes b_2^-\eta'$,
we may assume that $f(b;b_1,b_2)\in \Z[v]$. (3) is immediate from the definition of the
coproduct.
\end{proof}

\begin{remark}\label{remark:alt tensor splitting}
In \cite{CHW2}, there is another map 
\[\Phi(\lambda,\lambda'):V(\lambda+\lambda')\rightarrow V(\lambda)\otimes_4 V(\lambda')\]
and there is a version of the above proposition 
where we replace $\chi,\otimes$ with $\Phi,\otimes_4$ everywhere.
\end{remark}

\begin{proposition}\label{prop:twisted tensor splitting}
Let $\lambda,\lambda'\in X^+$. Write $\xi=\xi_{-\lambda}$,
$\xi'=\xi_{-\lambda'}$, and $\xi''=\xi_{-\lambda-\lambda'}$.
\begin{enumerate}
\item There is a unique homomorphism of $\UU$-modules 
$\chi_4:{}^\omega V(\lambda+\lambda')\rightarrow 
{}^\omega V(\lambda')\otimes {}^\omega V(\lambda)$
such that $\chi_4(\xi'')=\xi'\otimes \xi$.
\item Let $b\in \cB(\lambda + \lambda')_\nu$. We have
\[
\chi_4(b^+\xi'')=\sum_{b_1,b_2} 
\pi^{p(b_1)p(b_2)} f(b;b_1,b_2) b_2^+\xi'\otimes b_1^+\xi
\]
where the sum is over $(b_1,b_2)\in B(\lambda)\times_\pi B(\lambda')$
and $f(b;b_1,b_2)\in \Zpv$.
\item If $b^+\xi'\neq 0$, then $f(b;b,1)=1$ and $f(b;b_1,1)=0$ for
any $b_1\neq b$. If $b^+\xi'=0$, then $f(b;b_1,1))=0$ for any $b_1$.
\end{enumerate}
\end{proposition}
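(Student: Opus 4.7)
The strategy is to derive $\chi_4$ from the $\Delta_4$-version of Proposition~\ref{prop:tensor splitting} (Remark~\ref{remark:alt tensor splitting}) by applying the $\omega$-twist and invoking the isomorphism of Lemma~\ref{cor:comparing twists}.

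By Remark~\ref{remark:alt tensor splitting}, there is a unique $\UU$-module homomorphism $\Phi = \Phi(\lambda,\lambda'): V(\lambda+\lambda') \to V(\lambda)\otimes_4 V(\lambda')$ sending $\eta_{\lambda+\lambda'}$ to $\eta_\lambda \otimes \eta_{\lambda'}$, together with the expansion and specialization properties analogous to parts (2)--(3) of Proposition~\ref{prop:tensor splitting}. Since any $\UU$-module map $f : M \to N$ is automatically a map of $\omega$-twisted modules ${}^\omega M \to {}^\omega N$, we may view $\Phi$ as a homomorphism ${}^\omega V(\lambda+\lambda') \to {}^\omega(V(\lambda)\otimes_4 V(\lambda'))$. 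Composing with the isomorphism $\tau$ of Lemma~\ref{cor:comparing twists} (applied to $M = V(\lambda)$, $M' = V(\lambda')$) yields a $\UU$-module homomorphism
\[
\chi_4 = \tau \circ \Phi : {}^\omega V(\lambda+\lambda') \longrightarrow {}^\omega V(\lambda') \otimes {}^\omega V(\lambda).
\]
For (1), we then compute $\chi_4(\xi'') = \tau(\eta_\lambda \otimes \eta_{\lambda'}) = \pi^{p(\eta_\lambda)p(\eta_{\lambda'})}\eta_{\lambda'}\otimes \eta_\lambda = \xi'\otimes\xi$, the parity factor being trivial because the highest weight vectors are even. Uniqueness follows because $\xi''$ cyclically generates ${}^\omega V(\lambda+\lambda')$.

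For (2), observe that $\omega$ restricts to an algebra isomorphism $\Up \to \Um$ sending $E_i \mapsto F_i$, so $\omega(b^+) = b^-$ for every $b \in \ff$. Under the identification of underlying vector spaces, this gives $b^+ \cdot \xi'' = b^- \eta_{\lambda+\lambda'}$, and likewise $b_i^+\xi$, $b_i^+\xi'$ correspond to $b_i^-\eta_\lambda$, $b_i^-\eta_{\lambda'}$ in the untwisted modules. Applying the $\Delta_4$-analogue of Proposition~\ref{prop:tensor splitting}(2) expands $\Phi(b^-\eta_{\lambda+\lambda'}) = \sum f(b;b_1,b_2) b_1^-\eta_\lambda \otimes b_2^-\eta_{\lambda'}$ with $f(b;b_1,b_2) \in \Z[v] \subseteq \Zpv$. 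Applying $\tau$ and using $p(b_i^-\eta) = p(b_i)$ produces the factor $\pi^{p(b_1)p(b_2)}$ and swaps the factors, giving the stated formula. For (3), the conditions $b^+\xi' \neq 0$ and $b^-\eta' \neq 0$ are equivalent because the vectors are equal in the common underlying space, so the claim reduces directly to the $\Delta_4$-analogue of Proposition~\ref{prop:tensor splitting}(3).

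The main obstacle is purely bookkeeping: one must carefully track the parity factor introduced by $\tau$ and maintain the dictionary between the ``lowest weight'' labelings $b^+\xi$ on ${}^\omega V(\mu)$ and the ``highest weight'' labelings $b^-\eta$ on $V(\mu)$. Once this dictionary is set up, no genuinely new argument beyond that in \cite{CHW2} is required.
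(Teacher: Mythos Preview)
Your proof is correct and follows essentially the same route as the paper: both construct $\chi_4$ as the composition of the $\Delta_4$-splitting $\Phi:V(\lambda+\lambda')\to V(\lambda)\otimes_4 V(\lambda')$ (viewed after $\omega$-twisting) with the isomorphism $\tau$ of Lemma~\ref{cor:comparing twists}, and then read off (2) and (3) from the corresponding facts for $\Phi$. You simply spell out in more detail the dictionary $b^+\xi = b^-\eta$ and the origin of the sign $\pi^{p(b_1)p(b_2)}$, which the paper leaves implicit under ``follow by the definitions.''
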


\begin{proof}
By Proposition \ref{prop:tensor splitting} and Remark \ref{remark:alt tensor splitting}, there is a homomorphism
\[\Phi:V(\lambda+\lambda')\rightarrow V(\lambda)\otimes_4 V(\lambda').\]
Then $\Phi$ can also be viewed as a homomorphism ${}^{\omega}V(\lambda+\lambda')\rightarrow {}^{\omega}(V(\lambda)\otimes_4 V(\lambda'))$.
By Lemma \ref{cor:comparing twists}, we have an isomorphism
\[{}^{\omega}(V(\lambda)\otimes_4 V(\lambda'))\rightarrow {}^\omega V(\lambda')\otimes {}^\omega V(\lambda)\qquad
y\otimes z\mapsto \pi^{p(y)p(z)}z\otimes y.
\]
Then taking $\chi'$ to be the composition of these homomorphisms proves (1). The remaining properties follow by the definitions.
\end{proof}

\begin{proposition}\label{prop:contraction}
Let $\eta=\eta_{\lambda}$ and $\xi=\xi_{-\lambda}$.
\begin{enumerate}
\item There is a unique homomorphism of $\UU$-modules
$\delta_\lambda:N(\lambda,\lambda)\rightarrow \Qvp$,
where $\Q(v)$ is a $\UU$-module under the counit map,
such that $\delta_\lambda(\xi\otimes \eta)=1$.

\item Let $b,b'\in B(\lambda)$. Then 
$\delta_\lambda(b^+\xi\otimes b'^-\eta)=1$
if $b=b'=1$ and is in $v\Zpv$ otherwise.
\end{enumerate}
\end{proposition}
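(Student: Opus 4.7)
The plan is to realize $\delta_\lambda$ as the descent of the counit $\epsilon\colon\UU\to\Qvp$ through the surjection of Proposition~\ref{prop:dotU onto N}, then establish (2) by induction using the Verma-tensor identity \eqref{eq:-+tensor}. For (1), the counit induces a $\UU$-module homomorphism $\tilde\delta\colon\dotU 1_0\to\Qvp$ via $u1_0\mapsto\epsilon(u)$, with $\Qvp$ viewed as the trivial $\UU$-module. Setting $\zeta=0$ and $a_i=\langle i,\lambda\rangle$, the kernel $P(0,a,a)$ of the surjection $\dotU 1_0\twoheadrightarrow N(\lambda,\lambda)$ is generated by $E_i^{(n)}1_0$ and $F_i^{(n)}1_0$ for $n>a_i$; since $\epsilon$ is multiplicative with $\epsilon(E_i^{(n)})=\epsilon(F_i^{(n)})=0$ for $n\geq 1$, the map $\tilde\delta$ vanishes on these generators and descends to a $\UU$-linear $\delta_\lambda\colon N(\lambda,\lambda)\to\Qvp$ with $\delta_\lambda(\eta_\lambda\otimes\xi_{-\lambda})=1$. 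Uniqueness is immediate because $\eta_\lambda\otimes\xi_{-\lambda}$ is a cyclic $\UU$-generator of $N(\lambda,\lambda)$.

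For (2), I induct on $\height|b|+\height|b'|$, the base case $b=b'=1$ being immediate. For $(b,b')\neq(1,1)$, applying $\delta_\lambda$ to \eqref{eq:-+tensor} yields
\[\epsilon(b^-b'^+)=\delta_\lambda(b^-\eta_\lambda\otimes b'^+\xi_{-\lambda})+\sum_{(b_1,b_1')}c_{b_1,b_1'}\,\delta_\lambda(b_1^-\eta_\lambda\otimes b_1'^+\xi_{-\lambda}),\]
where the sum ranges over $(b_1,b_1')$ of strictly smaller height with $c_{b_1,b_1'}\in\Z[v,v^{-1}]$. The left side vanishes since $\epsilon(b)\epsilon(b')=0$. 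By the inductive hypothesis every summand on the right lies in $v\Zpv$ except possibly the $(b_1,b_1')=(1,1)$-term, which contributes $c_{1,1}$; integrality of $\delta_\lambda$ on ${}_\A N(\lambda,\lambda)$ places the result in $\A$, so the remaining task reduces to showing $c_{1,1}\in v\Zpv$.

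The main obstacle is therefore this last claim. The only contribution to $c_{1,1}$ comes from the unique term in $\Delta_3(b^-)=\sum b^-_{(1)}\otimes b^-_{(2)}$ with $b^-_{(1)}\in\Uz$, obtained by taking the second component throughout the multiplicative expansion of $\Delta_3(F_i)=F_i\otimes 1+\tJ_i\tK_i\otimes F_i$; this term is proportional to $\tJ_{|b|}\tK_{|b|}\otimes b^-$ (with supersigns that must be tracked explicitly), and its action on $\eta_\lambda$ produces a factor of $v^{\langle\widetilde{|b|},\lambda\rangle}$. The crucial point is that $b\in\cB(\lambda)$ with $|b|>0$ forces some $i$ in the support of $|b|$ to satisfy $\langle i,\lambda\rangle\geq 1$: otherwise $b$ lies in the subalgebra of $\ff$ generated by $\{\theta_j:\langle j,\lambda\rangle=0\}$, which sits inside $\mathcal T=\ker(\ff\to V(\lambda))$ by closure under left multiplication (each such $\theta_j$ lies in $\mathcal T$ when $\langle j,\lambda\rangle=0$), forcing $b\eta_\lambda=0$ and contradicting $b\in\cB(\lambda)$ via Lemma~\ref{lem:extra CB props}(4). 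Consequently $\langle\widetilde{|b|},\lambda\rangle\geq d_i\geq 1$, so $v^{\langle\widetilde{|b|},\lambda\rangle}\in v\Z[v]$; combined with the fact that the remaining scalar, arising from the action of $b^-$ on the second tensor slot projecting onto the one-dimensional weight-$\lambda$ subspace, lies in $\Zpv$ by a standard commutator-based analysis, this yields $c_{1,1}\in v\Zpv$ and closes the induction.
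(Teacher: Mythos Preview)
Your construction of $\delta_\lambda$ in (1) via the counit and Proposition~\ref{prop:dotU onto N} is correct and cleaner than the paper's route, which instead reinterprets $\delta_\lambda$ as a bilinear pairing $[\cdot,\cdot]$ on $V(\lambda)$ and builds it from the isomorphism $V(\lambda)^*\cong V(\lambda)$ induced by the anti-automorphism $\omega S$.

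For (2), however, your induction does not close. You assert that each summand $c_{b_1,b_1'}\,\delta_\lambda(b_1^-\eta\otimes b_1'^+\xi)$ with $(b_1,b_1')\neq(1,1)$ lies in $v\Zpv$; but the coefficients $c_{b_1,b_1'}$ coming from \eqref{eq:-+tensor} are only known to lie in $\Z[v,v^{-1}]$, so even granting $\delta_\lambda(b_1^-\eta\otimes b_1'^+\xi)\in v\Zpv$ by induction, the product can acquire arbitrary negative $v$-degree. The same problem undermines your treatment of $c_{1,1}$: you correctly isolate the factor $v^{\langle\widetilde{|b|},\lambda\rangle}$ coming from $\tK_{|b|}$ acting on $\eta_\lambda$, but the ``remaining scalar'' is the coefficient of $\eta_\lambda$ in $b^+b'^-\eta_\lambda$, which is a polynomial in the quantum integers $[n]_{v_i,\pi_i}\in\Zpvvi$ and has no a priori reason to lie in $\Zpv$. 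The phrase ``standard commutator-based analysis'' does not fill this gap.

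The paper sidesteps the triangular expansion entirely. Having identified $\delta_\lambda(x\otimes y)$ with a pairing $[x,y]$ on $V(\lambda)$, it shows by a one-step induction on $\height\nu$ that
\[
[x,y]=(-1)^{\height\nu}\pi^{\mathbf p(\nu)}\pi_\nu v_\nu\,(x,y)\qquad\text{for }x,y\in V(\lambda)_{\lambda-\nu},
\]
where $(\cdot,\cdot)$ is the polarization on $V(\lambda)$. For $\nu\neq 0$ the factor $v_\nu=v^{\sum_i d_i\nu_i}$ has strictly positive $v$-degree, and Lemma~\ref{lem:extra CB props}(2) places $(b^-\eta,b'^-\eta)$ in $\Zpv$; the product then lies in $v\Zpv$. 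This replaces your uncontrolled recursion by a single explicit comparison with a form whose values on the canonical basis are already known.
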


\begin{proof}
For such a map to exist, we would need
\[\Delta(E_i) (x\otimes y)
=E_i x\otimes \tK_{i} y+ \pi_i^{p(x)} x\otimes F_i y\in \ker(\delta_\lambda)\text{ for all }x\otimes y\in N(\lambda,\lambda);
\]
\[\Delta(F_i) (x\otimes y)
=F_i x\otimes y+ \pi_i^{p(x)}\tJ_i\tK_{i} x\otimes \pi_i \tJ_i E_i y\in \ker(\delta_\lambda)\text{ for all }x\otimes y\in N(\lambda,\lambda);
\]
\[\Delta(K_\mu-1) (x\otimes y)
=K_{\mu}x\otimes K_{-\mu} y-x\otimes y\in \ker(\delta_\lambda)\text{ for all }x\otimes y\in N(\lambda,\lambda);
\]
\[\Delta(J_\mu-1) (x\otimes y)
=J_{\mu}x\otimes J_{\mu} y-x\otimes y\in \ker(\delta_\lambda)\text{ for all }x\otimes y\in N(\lambda,\lambda);
\]
The following statement is equivalent to (1).
There is a unique bilinear pairing 
$[,]: V(\lambda)\times V(\lambda)\rightarrow \Qvp$
satisfying $[\eta,\eta]=1$, 
\[[E_ix,\tK_{i} y]=-\pi_i^{p(x)}[x, F_i y],\quad
[F_i x,y]=-\pi_i^{p(x)}[\tJ_i\tK_{i}x,\pi_i\tJ_iE_i y]\] 
\[[K_{\mu} x,y]=[x,K_\mu y],\quad 
[J_\mu x,y]=[x,J_\mu y].\]
We may rewrite the conditions as
\[[ E_ix, y]=-\pi_i^{p(x)}[ x, F_i \tK_{-i} y],\quad
[F_i x,y]=-\pi_i^{p(x)}[x,\pi_i\tK_{i}F_i y]\] 
\[[K_{\mu} x,y]=[x,K_\mu y],\quad 
[J_\mu x,y]=[x,J_\mu y].\]

Let $\clubsuit:\UU\rightarrow \UU$ be the $\Qvp$-linear map
defined by
\[\clubsuit(E_i)=-F_i\tK_{-i},\quad 
\clubsuit(F_i)=-\pi_i \tK_i E_i,
\quad \clubsuit(K_\mu)=K_\mu,\quad \clubsuit(J_\mu)=J_\mu,\]
\[\clubsuit(xy)=\pi^{p(x)p(y)}\clubsuit(y)\clubsuit(x).\]
To see this is a well-defined map, we note that
$\clubsuit=\omega S$ where $S$
is the antipode of $\Delta$; that is, the $\Qvp$-linear map 
satisfying
\[S(E_i)=-E_i\tK_i,\quad S(F_i)=-\tJ_i\tK_{-i}F_i ,
\quad S(K_\mu)=K_{-\mu}, \quad S(J_\mu)=J_\mu,\]
\[S(xy)=\pi^{p(x)p(y)}S(y)S(x).\]
(Note that this is not the antipode defined in \cite[\S 2.4]{CHW1}, as that
map corresponds to the coproduct $\Delta_1$.)

Then we see that (1) is equivalent to proving the
existence of a unique bilinear pairing
$[,]: V(\lambda)\times V(\lambda)\rightarrow \Qvp$
satisfying $[\eta,\eta]=1$ and $[ux,y]=\pi^{p(x)p(u)}[x,\clubsuit(u)y]$.
This is proven using a standard argument: in brief, the restricted dual $V(\lambda)^*$
has a $\UU$-action $(uf)(x)=\pi^{p(u)p(f)}f(\clubsuit(x)u)$
under which we have an isomorphism $V(\lambda)^*\cong V(\lambda)$,
and $[\cdot,\cdot]$ is the natural pairing.

Let $(\cdot,\cdot)$ be the polarization on $V(\lambda)$.
We show by induction on $\height \nu\geq 0$ that
\begin{equation}\label{eq:[]vs()}
[x,y]=(-1)^{\height \nu} \pi^{\mathbf p(\nu)}
\pi_{\nu} v_{\nu} (x, y)
\end{equation}
for $x,y\in V(\lambda)_{\lambda-\nu}$, where here we set 
$\mathbf p(\sum_t i_t)=\sum_{s<t}p(i_s)p(i_t)$.

This is obvious for $\nu=0$. Assume that $\height \nu\geq 1$.
Then we can assume that $x=F_ix'$ for some $i$ such that $\nu_i>0$.
Then by induction, we compute
\begin{align*}
[x,y]&=[F_ix',y]=-\pi_i^{p(x')}[x',\pi_i\tK_iE_i y]
=-\pi_i^{p(\nu-i)}\pi_i[x',\tK_iE_i y]\\
&=-\pi_i^{p(\nu-i)}\pi_i (-1)^{\height (\nu-i)} \pi_i^{\mathbf p(\nu-i)}
\pi_{\nu-i}v_{\nu-i} (x',\tK_i E_i y)\\
&=(-1)^{\height \nu} \pi^{\mathbf p(\nu)}\pi_{\nu} v_{\nu}(F_ix',y)
=(-1)^{\height \nu} \pi^{\mathbf p(\nu)}\pi_{\nu} v_{\nu}(x,y).
\end{align*}
This proves that \eqref{eq:[]vs()} holds. Now recall 
from Lemma \ref{lem:extra CB props} (2) that $(b\eta,b'\eta)\in \Zpv$
for any $b,b'\in \cB$. Combining this with \eqref{eq:[]vs()},
we see that (2) follows.
\end{proof}

We have demonstrated the existence of the maps $\chi,\chi_4$
Let $\lambda,\lambda',\lambda''\in X^+$. We define
a $\UU$-module homomorphism
\[t:N(\lambda+\lambda',\lambda'+\lambda'')
\rightarrow N(\lambda,\lambda'')\]
defined as the composition 
\[t=(1\otimes \delta_\lambda\otimes 1)\circ (\chi\otimes \chi').\]

\begin{lemma}
We have $\Psi t=t\Psi$.
\end{lemma}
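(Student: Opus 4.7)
The plan is to exploit the fact that the bar involution $\Psi$ on any module in $\mathcal F$ is pinned down by how it intertwines the $\UU$-action with the bar of $\UU$ together with its value on the canonical generator. We will show that both $\Psi t$ and $t\Psi$ are anti-$\UU$-linear maps $N(\lambda+\lambda',\lambda'+\lambda'')\to N(\lambda,\lambda'')$ and that they agree on a cyclic generator.

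First, I would verify that both $\Psi t$ and $t\Psi$ are $\Q^\pi$-linear maps which are anti-$\Qvp$-linear with respect to the bar involution, and that they intertwine the $\UU$-action via bar. Since $t$ is a $\UU$-module homomorphism, in particular $\Qvp$-linear, while $\Psi$ on every module in $\mathcal F$ satisfies $\Psi(f x)=\bar f\,\Psi(x)$ and $\Psi(\Delta(u) x)=\Delta(\bar u)\Psi(x)$, a direct computation gives
\[
\Psi t(f\,\Delta(u) x)=\bar f\,\Delta(\bar u)\,\Psi t(x)\quad\text{and}\quad t\Psi(f\,\Delta(u) x)=\bar f\,\Delta(\bar u)\,t\Psi(x).
\]

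Next, let $v_0=\eta_{\lambda+\lambda'}\otimes \xi_{-\lambda'-\lambda''}$. Since $\bar{\eta_\mu}=\eta_\mu$, $\bar{\xi_{-\mu'}}=\xi_{-\mu'}$, and $\Theta_\nu$ for $\nu\in \N[I]\setminus\{0\}$ annihilates $\eta_\mu\otimes\xi_{-\mu'}$ by weight considerations (its left tensor factor lies in $\UU^+_\nu$ and kills the highest weight vector), we have $\Psi(\eta_\mu\otimes\xi_{-\mu'})=\eta_\mu\otimes\xi_{-\mu'}$ for any $\mu,\mu'\in X^+$. Using the defining properties of $\chi$, $\chi'$, and $\delta_{\lambda'}$ from Propositions \ref{prop:tensor splitting}, \ref{prop:twisted tensor splitting}, and \ref{prop:contraction}, one computes $t(v_0)=\eta_\lambda\otimes \xi_{-\lambda''}$, and therefore
\[
\Psi t(v_0)=\Psi(\eta_\lambda\otimes \xi_{-\lambda''})=\eta_\lambda\otimes \xi_{-\lambda''}=t(v_0)=t\Psi(v_0).
\]

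Finally, I would show that $v_0$ generates $N(\lambda+\lambda',\lambda'+\lambda'')$ as a $\UU$-module. Using the coproduct $\Delta=\Delta_3$, the fact that $E_i\eta_{\lambda+\lambda'}=0$ implies by induction that $\Delta(u^+)(v_0)=\eta_{\lambda+\lambda'}\otimes u^+\xi_{-\lambda'-\lambda''}$ for every $u^+\in \UU^+$. Then applying elements of $\UU^-$ and inducting on the height of the $\UU^-$-weight of the first tensor factor, one reaches every vector of the form $u^-\eta_{\lambda+\lambda'}\otimes u^+\xi_{-\lambda'-\lambda''}$, which by the triangular decomposition span $N(\lambda+\lambda',\lambda'+\lambda'')$. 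Combining cyclicity of $v_0$, anti-$\UU$-linearity, and agreement at $v_0$, we conclude $\Psi t=t\Psi$.

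The only nontrivial technical point I expect is the explicit identification $t(v_0)=\eta_\lambda\otimes\xi_{-\lambda''}$: one must unravel the tensor factor orderings carefully so that the contraction $\delta_{\lambda'}$ is applied against the interior $V(\lambda')\otimes{}^\omega V(\lambda')$ pair produced by $\chi\otimes\chi'$. Once this bookkeeping is done, everything else is formal and follows from the intertwining properties of $\Psi$ and the fact that $t$ is a $\UU$-module homomorphism.
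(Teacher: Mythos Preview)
Your proof is correct and follows essentially the same approach as the paper: both arguments use cyclicity of $\eta_{\lambda+\lambda'}\otimes\xi_{-\lambda'-\lambda''}$, the fact that $t$ is $\UU$-linear while $\Psi$ intertwines the action with bar, and that $\Psi$ fixes the canonical generator in each $N(\mu,\mu')$. The paper simply packages this as a direct computation of $t\Psi$ and $\Psi t$ on $u(\eta\otimes\xi)$, whereas you first abstract the ``anti-$\UU$-linearity'' of both composites and then check agreement on the generator---but the content is identical (and note that cyclicity is already available from Proposition~\ref{prop:dotU onto N}, so your inductive argument for it is unnecessary).
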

\begin{proof}
We write $\eta=\eta_{\lambda+\lambda'}$ and
$\xi=\xi_{-\lambda'-\lambda''}$.
Since any element of $N(\lambda+\lambda',\lambda'\otimes \lambda'')$
is of the form $u(\eta\otimes\xi)$, it is enough to check that
\[
t \Theta( \bar{ u( \eta\otimes \xi )} )=
\Theta( \bar{ t( u( \eta\otimes\xi ))}).
\]
Well, on one hand
\[
t \Theta( \bar{ u( \eta\otimes \xi )} )
=t(u \Theta(\eta\otimes \xi))
=ut((\eta\otimes \xi))
=u(\eta_{\lambda}\otimes \xi_{-\lambda''}).
\]
On the other hand,
\[
\Theta( \bar{ t(u( \eta\otimes \xi ))} )
=\Theta(\bar{ut(\eta\otimes\xi)})
=u\Theta(\eta_\lambda\otimes \xi_{-\lambda''})
=u(\eta_\lambda\otimes \xi_{-\lambda''}).
\]
The lemma is proved.
\end{proof}

\begin{lemma}
\begin{enumerate}
\item Let $b\in \cB(\lambda)$ and $b''\in \cB(\lambda'')$.
Then
\[
t(b^-\eta_{\lambda+\lambda'}\otimes b''^+\xi_{-\lambda'-\lambda''})
= b^-\eta_{\lambda}\otimes b''^+\xi_{-\lambda''} \text{ mod }
v\cL(\lambda,\lambda'').
\]
In particular,
$t((b\diamondsuit b')_{\lambda+\lambda',\lambda'+\lambda''})
=(b\diamondsuit b')_{\lambda,\lambda''}$ mod $v\cL(\lambda,\lambda'')$.
\item Let $b\in \cB(\lambda+\lambda')$ and 
$b''\in \cB(\lambda'+ \lambda'')$. Assume that either 
$b\notin \cB(\lambda)$ or $b''\notin \cB(\lambda'')$.
Then
\[
t(b^-\eta_{\lambda+\lambda'}\otimes b''^+\xi_{-\lambda'-\lambda''})
= 0 \text{ mod }
v\cL(\lambda,\lambda'').
\]
In particular,
$t((b\diamondsuit b')_{\lambda+\lambda',\lambda'+\lambda''})
=0$ mod $v\cL(\lambda,\lambda'')$.
\item $t$ is surjective.
\end{enumerate}
\end{lemma}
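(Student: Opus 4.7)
My plan is to unpack the definition $t=(1\otimes\delta_{\lambda'}\otimes 1)\circ(\chi\otimes\chi_4)$ (reading the middle contraction as the $\delta$-map for $V(\lambda')\otimes{}^\omega V(\lambda')$ and identifying $\chi'=\chi_4$, which is what the composition requires) and trace what happens to a $\pi$-basis element $b^-\eta_{\lambda+\lambda'}\otimes b''^+\xi_{-\lambda'-\lambda''}$ through each factor, modulo $v\cL(\lambda,\lambda'')$. First I would invoke the explicit expansions in Propositions \ref{prop:tensor splitting}(2) and \ref{prop:twisted tensor splitting}(2):
\[
\chi(b^-\eta_{\lambda+\lambda'}) = \sum_{b_1,b_2} f(b;b_1,b_2)\, b_1^-\eta_\lambda\otimes b_2^-\eta_{\lambda'},
\]
\[
\chi_4(b''^+\xi_{-\lambda'-\lambda''}) = \sum_{b_3,b_4}\pi^{p(b_3)p(b_4)} f(b'';b_3,b_4)\, b_4^+\xi_{-\lambda'}\otimes b_3^+\xi_{-\lambda''},
\]
obtaining a quadruple sum for the tensor product. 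Applying $(1\otimes\delta_{\lambda'}\otimes 1)$ and using Proposition \ref{prop:contraction}(2), the factor $\delta_{\lambda'}(b_2^-\eta_{\lambda'}\otimes b_4^+\xi_{-\lambda'})$ lies in $v\Zpv$ unless $b_2=b_4=1$, in which case it equals $1$. Since $b_4=1$ forces $\pi^{p(b_3)p(b_4)}=1$, the quadruple sum collapses modulo $v\cL(\lambda,\lambda'')$ to
\[
\sum_{b_1,b_3} f(b;b_1,1)\, f(b'';b_3,1)\, b_1^-\eta_\lambda\otimes b_3^+\xi_{-\lambda''}.
\]

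For part (1), under the hypothesis $b\in\cB(\lambda)$ and $b''\in\cB(\lambda'')$, Propositions \ref{prop:tensor splitting}(3) and \ref{prop:twisted tensor splitting}(3) assert $f(b;b,1)=1$ with $f(b;b_1,1)=0$ for $b_1\neq b$, and analogously for $b''$; the surviving sum is exactly $b^-\eta_\lambda\otimes b''^+\xi_{-\lambda''}$. For part (2), if $b\notin\cB(\lambda)$ the vanishing clause of Proposition \ref{prop:tensor splitting}(3) forces $f(b;b_1,1)=0$ for every $b_1$, and symmetrically if $b''\notin\cB(\lambda'')$, so the sum vanishes in either case. The ``in particular'' statements about $(b\diamondsuit b'')$ then follow because $\chi$, $\chi_4$, and $\delta_{\lambda'}$ each preserve the relevant $\Zpv$-lattices (for $\chi$ via the argument cited in the proof of Proposition \ref{prop:tensor splitting}, analogously for $\chi_4$, and for $\delta_{\lambda'}$ by Proposition \ref{prop:contraction}(2)). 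Consequently $t$ sends $\cL(\lambda+\lambda',\lambda'+\lambda'')$ into $\cL(\lambda,\lambda'')$ and $v\cL$ into $v\cL$, so the congruences established on the $\pi$-basis transfer to canonical-basis elements.

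For part (3), by (1) the $\Qvp$-linear map $t$ sends each canonical basis vector $(b\diamondsuit b'')_{\lambda+\lambda',\lambda'+\lambda''}$ with $(b,b'')\in\cB(\lambda)\times_\pi\cB(\lambda'')$ to $(b\diamondsuit b'')_{\lambda,\lambda''}$ plus an element of $v\cL(\lambda,\lambda'')$. The resulting transition matrix, from these images back to the canonical basis of $N(\lambda,\lambda'')$, is therefore congruent to the identity modulo $v$; its determinant lies in $1+v\Zpv\subset(\Qvp)^\times$, so the matrix is invertible over $\Qvp$. Hence the images of $t$ contain every canonical basis element of $N(\lambda,\lambda'')$ and $t$ is surjective.

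The main obstacle I anticipate is the careful bookkeeping through the two expansions and the contraction, in particular ensuring that the leading-coefficient and vanishing clauses of Propositions \ref{prop:tensor splitting}(3) and \ref{prop:twisted tensor splitting}(3) apply precisely when claimed and that no parasitic $\pi$-sign survives the collapse (it does not, because the surviving index has $b_4=1$ and hence trivial parity). Once that bookkeeping is in place, parts (1) and (2) follow directly, and the lattice-compatibility of $\chi$, $\chi_4$, $\delta_{\lambda'}$ plus a standard invertibility-of-a-$(I+vM)$-matrix argument give surjectivity in (3).
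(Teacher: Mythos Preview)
Your treatment of parts (1) and (2) is correct and is exactly the argument the paper has in mind: it simply cites Propositions \ref{prop:tensor splitting}--\ref{prop:contraction} and the definition of $t$, and you have written out what that citation unpacks to. The bookkeeping with the $\pi$-sign is fine, and the lattice-preservation claim needed for the ``in particular'' clauses is justified as you say.

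For part (3) your argument is valid but the paper takes a much shorter route. Since $t$ is a composition of $\UU$-module homomorphisms, it is itself one; from the case $b=b''=1$ of your computation one has $t(\eta_{\lambda+\lambda'}\otimes\xi_{-\lambda'-\lambda''})=\eta_\lambda\otimes\xi_{-\lambda''}$, and this vector generates $N(\lambda,\lambda'')$ as a $\UU$-module (by Proposition \ref{prop:dotU onto N}, for instance). Hence $t$ is surjective. Your determinant argument works too, but it requires the extra lattice-compatibility check, whereas the paper's argument needs only that $t$ intertwines the $\UU$-action.
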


\begin{proof}
Parts (1) and (2) follow from Propositions 
\ref{prop:tensor splitting}-\ref{prop:contraction} and the
definition of $t$. In particular,
note that $t(\eta_{\lambda+\lambda'}\otimes \xi_{-\lambda'-\lambda''})
=\eta_{\lambda}\otimes \xi_{-\lambda''}$,
which generates $N(\lambda,\lambda'')$.
\end{proof}

\begin{proposition}
\begin{enumerate}
\item Let $b\in \cB(\lambda)$ and $b''\in \cB(\lambda'')$.
Then \[t((b\diamondsuit b')_{\lambda+\lambda',\lambda'+\lambda''})
=(b\diamondsuit b')_{\lambda,\lambda''}.\]
\item Let $b\in \cB(\lambda+\lambda')$ and $b''\in \cB(\lambda'+\lambda'')$.
Assume that $b\notin \cB(\lambda)$ or $b''\notin \cB(\lambda'')$.
Then \[t((b\diamondsuit b')_{\lambda+\lambda',\lambda'+\lambda''})
=0.\]
\end{enumerate}
\end{proposition}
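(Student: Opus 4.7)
The plan is to upgrade the congruences modulo $v\cL(\lambda,\lambda'')$ recorded in the preceding lemma to honest equalities, by invoking the rigidity of bar-invariant integral elements with prescribed leading term. The precise tool is Theorem \ref{thm:CBforN}(5), which asserts that the natural map
\[
\cL(\lambda,\lambda'')\cap\Psi(\cL(\lambda,\lambda''))\longrightarrow \cL(\lambda,\lambda'')/v\cL(\lambda,\lambda'')
\]
is an isomorphism. The proof amounts to checking that $t((b\diamondsuit b')_{\lambda+\lambda',\lambda'+\lambda''})$ lies in the source and has the correct image in the target.

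First I would verify membership in the lattice and its bar-fixed counterpart. Containment in $\cL(\lambda,\lambda'')$ is built into the preceding lemma, which explicitly describes the image of $t((b\diamondsuit b')_{\lambda+\lambda',\lambda'+\lambda''})$ in $\cL(\lambda,\lambda'')/v\cL(\lambda,\lambda'')$ (and the congruence is only meaningful because both sides lie in $\cL(\lambda,\lambda'')$). Bar-invariance of $t((b\diamondsuit b')_{\lambda+\lambda',\lambda'+\lambda''})$ follows from the commutation $\Psi t = t\Psi$ established in the first lemma of this subsection, combined with the defining $\Psi$-invariance of the canonical basis element. Since $\Psi$ is a semilinear involution, bar-invariance together with membership in $\cL(\lambda,\lambda'')$ forces membership in $\Psi(\cL(\lambda,\lambda''))$, so in total $t((b\diamondsuit b')_{\lambda+\lambda',\lambda'+\lambda''})\in \cL(\lambda,\lambda'')\cap\Psi(\cL(\lambda,\lambda''))$.

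For (1), the element $(b\diamondsuit b')_{\lambda,\lambda''}$ is itself bar-invariant and lies in $\cL(\lambda,\lambda'')$, hence in $\cL(\lambda,\lambda'')\cap\Psi(\cL(\lambda,\lambda''))$. The preceding lemma says that the difference
\[
t((b\diamondsuit b')_{\lambda+\lambda',\lambda'+\lambda''}) - (b\diamondsuit b')_{\lambda,\lambda''}
\]
vanishes in the quotient $\cL(\lambda,\lambda'')/v\cL(\lambda,\lambda'')$. By the injectivity in Theorem \ref{thm:CBforN}(5), this difference is zero, proving (1). Part (2) is handled identically: $t((b\diamondsuit b')_{\lambda+\lambda',\lambda'+\lambda''})$ itself is bar-invariant and integral, hence in $\cL(\lambda,\lambda'')\cap\Psi(\cL(\lambda,\lambda''))$, and by the second clause of the preceding lemma it maps to $0$ in the quotient, so the same isomorphism forces it to vanish.

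No genuine obstacle arises here; all of the technical work (the construction of $t$, the commutation $\Psi t = t\Psi$, the integrality of the quasi-$\mathcal R$-matrix, and the mod-$v\cL$ computation of $t$ on canonical basis vectors) has already been carried out in the preceding subsections. The present proposition is a clean application of the characterization of canonical basis elements as the unique $\Psi$-fixed vectors in $\cL(\lambda,\lambda'')\cap\Psi(\cL(\lambda,\lambda''))$ with prescribed leading term.
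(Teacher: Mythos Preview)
Your proposal is correct and follows essentially the same approach as the paper's proof. The paper states the argument more tersely---the difference of the two sides lies in $v\cL(\lambda,\lambda'')$ and is fixed by $\Psi$, hence vanishes---but this is precisely the rigidity principle you make explicit via Theorem~\ref{thm:CBforN}(5).
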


\begin{proof}
The differences of the two sides of the claimed equalities 
in (1) and (2) lie in $v\cL(\lambda,\lambda'')$ and are 
fixed by $\Psi$, hence the difference is zero.
\end{proof}
\subsection{The canonical basis of $\dotU$}
We are now in a position to produce a canonical basis for $\dotU$ 
which descends to the canonical bases for modules in $\mathcal F$.
\begin{theorem}\label{thm:cbdotU}
Let $\zeta\in X$
and $(b,b'')\in \cB\times_\pi \cB$.
\begin{enumerate}
\item There is a unique element $u=b\diamondsuit_\zeta b''\in \dotUint$
such that
\[
\Delta(u)(\eta_{\lambda}\otimes \xi_{-\lambda''})=
(b\diamondsuit b'')_{\lambda,\lambda''}
\]
for any $\lambda,\lambda''\in X^+$ such that
$b\in B(\lambda)$, $b''\in B(\lambda'')$,
and $\zeta=\lambda-\lambda''$.
\item If $\lambda,\lambda''\in X^+$ are such
that $\lambda-\lambda''=\zeta$ and either
$b\notin \cB(\lambda)$ or $b''\notin \cB(\lambda'')$,
then 
\[
\Delta(b\diamondsuit_\zeta b'')(\eta_{\lambda}\otimes \xi_{-\lambda''})=0.
\]
\item $\bar{b\diamondsuit_\zeta b''}=b\diamondsuit_\zeta b''$.
\item The elements $b\diamondsuit_\zeta b''$, for various
$\zeta,b,b''$ as above, form a $\Qvp$-basis of $\dotU$
and a $\A$-basis of $\dotUint$.
\end{enumerate}
\end{theorem}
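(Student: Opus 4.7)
Proof proposal:

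The plan is to realize $b \diamondsuit_\zeta b''$ as the unique element of $\dotUint 1_\zeta$ whose image in each module $N(\lambda,\lambda'')$ is the corresponding canonical basis element $(b \diamondsuit b'')_{\lambda,\lambda''}$ (or $0$ when $b \notin \cB(\lambda)$ or $b'' \notin \cB(\lambda'')$). The contraction maps $t$ from the previous subsection, together with their compatibility with the $\UU$-action and with canonical bases, provide the glue needed to assemble these local basis elements into a single global one.

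For uniqueness, I work with the triangular $\pi$-basis $\{b_1^- b_1'^+ 1_\zeta : (b_1,b_1') \in \cB \times_\pi \cB\}$ of $\dotUint 1_\zeta$. Any $u$ has finite support, and equation \eqref{eq:-+tensor} shows the evaluation $u \mapsto u \cdot (\eta_\lambda \otimes \xi_{-\lambda''})$ sends each basis element to $b_1^- \eta_\lambda \otimes b_1'^+ \xi_{-\lambda''}$ plus strictly lower-height terms. For $(\lambda,\lambda'')$ sufficiently dominant (possible by cofinality of $X^+$ and the monotonicity $\cB(\lambda) \subseteq \cB(\lambda')$ for $\lambda \leq \lambda'$, a consequence of Lemma \ref{lem:extra CB props}(4)), all support indices lie in $\cB(\lambda) \times_\pi \cB(\lambda'')$, so this map is triangular on the support and hence injective.

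For existence, fix $(\lambda_0,\lambda_0'')$ with $\lambda_0 - \lambda_0'' = \zeta$ sufficiently dominant that $b \in \cB(\lambda_0)$ and $b'' \in \cB(\lambda_0'')$. Choose a lift $u_0 \in \dotUint 1_\zeta$ of $(b \diamondsuit b'')_{\lambda_0,\lambda_0''}$ via Proposition \ref{prop:dotU onto N}, selected with leading term $b^- b''^+ 1_\zeta$ plus lower-order corrections in the triangular basis, and arrange $u_0$ to be bar-invariant, using that the $\pi$-basis is element-wise bar-invariant and that $(b \diamondsuit b'')_{\lambda_0,\lambda_0''}$ is $\Psi$-invariant. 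I claim $u_0$ works for every $(\lambda,\lambda'') \in \Lambda_\zeta := \{(\mu,\mu') \in X^+\times X^+ : \mu - \mu' = \zeta\}$, a directed set under $(\lambda_1,\lambda_1'') \leq (\lambda_2,\lambda_2'')$ iff $\lambda_2 - \lambda_1 \in X^+$. For $(\lambda,\lambda'') \leq (\lambda_0,\lambda_0'')$, $\UU$-linearity of $t$ with $t(\eta_{\lambda_0} \otimes \xi_{-\lambda_0''}) = \eta_\lambda \otimes \xi_{-\lambda''}$ gives $u_0 \cdot (\eta_\lambda \otimes \xi_{-\lambda''}) = t((b \diamondsuit b'')_{\lambda_0,\lambda_0''})$, which by the previous proposition on $t$ equals $(b \diamondsuit b'')_{\lambda,\lambda''}$ or $0$ as required. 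For general $(\lambda,\lambda'')$, pick a common upper bound $(\lambda_3,\lambda_3'') \in \Lambda_\zeta$ of $(\lambda,\lambda'')$ and $(\lambda_0,\lambda_0'')$; then $v_3 := u_0 \cdot (\eta_{\lambda_3} \otimes \xi_{-\lambda_3''})$ is $\Psi$-invariant, has leading term $b^- \eta_{\lambda_3} \otimes b''^+ \xi_{-\lambda_3''}$ modulo $v\cL(\lambda_3,\lambda_3'')$, and thus equals $(b \diamondsuit b'')_{\lambda_3,\lambda_3''}$ by the uniqueness characterization in Theorem \ref{thm:CBforN}. Contracting via $t$ down to $(\lambda,\lambda'')$ completes the argument. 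Statements (2) and (3) follow immediately, and (4) follows since the transition from $\{b_1^- b_1'^+ 1_\zeta\}$ to $\{b_1 \diamondsuit_\zeta b_1'\}$ is upper-unitriangular over $\A$.

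The main obstacle is the ``upward'' case in the existence argument, specifically the verification $v_3 = (b \diamondsuit b'')_{\lambda_3,\lambda_3''}$. This requires producing $u_0$ with both bar-invariance and the correct leading-term structure --- a nontrivial matching problem over $\A$, where the bar involution acts $\Zp$-semi-linearly --- and then translating these properties into the $v\cL(\lambda_3,\lambda_3'')$ congruence needed to invoke Theorem \ref{thm:CBforN}.
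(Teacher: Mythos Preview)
Your overall strategy matches the paper's: use the surjections $\dotU 1_\zeta \to N(\lambda,\lambda'')$, the contraction maps $t$, and their compatibility with canonical bases to glue a single element of $\dotUint 1_\zeta$. The uniqueness argument and the ``downward'' step via $t$ are essentially those in the paper.

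The gap is in your ``upward'' step. You assert that $v_3 = u_0 \cdot (\eta_{\lambda_3} \otimes \xi_{-\lambda_3''})$ lies in $b^-\eta_{\lambda_3} \otimes b''^+\xi_{-\lambda_3''} + v\cL(\lambda_3,\lambda_3'')$, but this is not established and is in fact the heart of the matter. The lift $u_0$ does lie in the height filtration $P(\height|b|,\height|b''|)$ and is bar-invariant (this is automatic from uniqueness at $(\lambda_0,\lambda_0'')$, exactly as in the paper's derivation of (3)), but neither property yields the $v\cL$ congruence: the triangularity in \eqref{eq:-+tensor} has coefficients in $\Z[v,v^{-1}]$, not in $v\Zpv$, and these coefficients depend on $(\lambda,\lambda'')$. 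So the congruence $u_0\cdot(\eta_{\lambda_0}\otimes\xi_{-\lambda_0''}) \equiv b^-\eta_{\lambda_0}\otimes b''^+\xi_{-\lambda_0''} \bmod v\cL$ does not transport to $(\lambda_3,\lambda_3'')$. You flag this as ``the main obstacle'' but do not resolve it.

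The paper sidesteps this entirely: it never invokes the $v\cL$ characterization at an upper bound. Instead it observes that for every sufficiently large $(\lambda,\lambda'')$ there is a \emph{unique} lift $u_{\lambda,\lambda''}$ inside the $\A$-filtration $P(\height|b|,\height|b''|)$ (an $\A$-level condition, no lattice needed). For two such pairs related by a shift $\lambda'\in X^+$, the contraction $t$ shows that $u_{\lambda+\lambda',\lambda'+\lambda''}$ is also a lift at the smaller pair, hence equals $u_{\lambda,\lambda''}$ by uniqueness there. Directedness of $\Lambda_\zeta$ then gives a single element $u$ working at all pairs, and (2)--(4) follow a posteriori. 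Replace your characterization-based upward step with this uniqueness-based one and the proof goes through.
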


\begin{proof}
First recall that, throughout the paper, we assume the root datum is $Y$-regular.
Then we can find $\lambda,\lambda''\in X^+$
such that $b\in B(\lambda)$, $b''\in B(\lambda'')$,
and $\lambda-\lambda''=\zeta$.

For any integers $N_1$, $N_2$, let $P(N_1,N_2)$
be the $\A$-submodule of $\dotUint$
spanned by the elements $b_1^-b_2^+1_{\zeta}$
where $b_1$ and $b_2$ run through the elements of $\cB$
such that $\height |b_1|\leq N_1$, $\height |b_2|\leq N_2$,
and $|b_1|-|b_2|=|b|-|b''|$.

Recall that any element of $N(\lambda,\lambda'')$
of the form $\beta^-\eta_\lambda\otimes \beta'^+\xi_{-\lambda}$
with $\beta,\beta'\in \cB$ is equal to 
$u_1(\eta_\lambda\otimes\xi_{-\lambda})$ for some
$u_1\in P(\height |\beta|, \height |\beta'|)$;
moreover, $u_1$ can be taken to be equal to
$\beta^-\beta'^+1_\zeta$ plus an element in
$P(\height |\beta|-1, \height |\beta'|-1)$.
In particular, we see that
$(b\diamondsuit b'')_{\lambda,\lambda''}$
is of the form $u(\eta_{\lambda}\otimes \xi_{-\lambda''})$
for some $u\in P(\height |b|,\height |b''|)$; moreover,
$u$ can be taken to be equal to $b^-b''^+1_\zeta$ plus
an element in $P(\height |b|-1,\height |b''|-1)$.

Assume that $u$ is such an element and $u'$ is another
such element. Then $(u-u')(\eta_\lambda\otimes \xi_{-\lambda''})=0$,
and so by Proposition \ref{prop:dotU onto N}
\[(u-u')\in \sum_{i,n>\ang{i,\lambda}} \dotUint F_i^{(n)} 1_\zeta+
\sum_{i,n>\ang{i,\lambda''}} \dotUint E_i^{(n)} 1_\zeta. \]
However, since $u-u'\in P(\height |b|,\height |b''|)$,
we must have $u-u'=0$ if $\ang{i,\lambda}>\height |b|$ and
$\ang{i,\lambda''}>\height|b''|$ for all $i\in I$.
For such $\lambda,\lambda''$ the element $u$ is uniquely
determined, and we denote it by $u_{\lambda,\lambda''}$.

Assume now that $\lambda,\lambda''\in X^+$ satisfy $b\in \cB(\lambda)$,
$b''\in \cB(\lambda'')$, and $\lambda-\lambda''=\zeta$.
Let $\lambda'\in X^+$ such that $\lambda>>0$ so that
$u'=u_{\lambda+\lambda',\lambda'+\lambda''}$ is defined.
Then 
\[
u'(\eta_{\lambda}\otimes \xi_{-\lambda''})
=u't(\eta_{\lambda+\lambda'}\otimes \xi_{-\lambda'-\lambda''})
=t((b\diamondsuit b'')_{\lambda+\lambda',\lambda'+\lambda''})
=(b\diamondsuit b'')_{\lambda,\lambda''}.
\]
Then $u_{\lambda,\lambda''}$ is independent of $\lambda,\lambda''$
if it is defined, so we may
denote it as $u$ without specifying $\lambda$, $\lambda''$.
In particular, this element satisfies the requirements of (1),
proving existence and uniqueness.

This argument also proves (2), since in this case 
we may pick $\lambda'$ so that $b\in \cB(\lambda +\lambda')$ and $b\in \cB(\lambda'+\lambda'')$,
and then $t((b\diamondsuit b'')_{\lambda+\lambda',\lambda'+\lambda''})=0$.

The bar-invariance of the canonical basis of $N(\lambda,\lambda'')$
and the uniqueness of the element $u$ shows that $\bar{u}=u$,
and hence (3) holds.

Finally, the uniqueness of $u$ forces
$b\diamondsuit_\zeta b''=b^-b''^+1_\zeta$ modulo $P(\height |b|-1,\height |b''|-1)$
Since $b^-b''^+ 1_\zeta$ forms a basis of $\dotU$, 
the transition matrix from $b^-b''^+1_\zeta$
to $b\diamondsuit_\zeta b''$ may be made upper unitriangular (by a suitable ordering), 
hence $b\diamondsuit_\zeta b''$ forms a basis.
\end{proof}

We let $\dot\cB=\set{(b\diamondsuit_\zeta b':(b,b')\in \cB\times_\pi \cB, \zeta\in X}$
and call this {\em the canonical basis of $\dotU$}.

\begin{corollary}\label{cor:dotCB Delta4 action}
Let $\zeta\in X$
and $(b,b')\in \cB\times_\pi \cB$.
For any $\lambda,\lambda'\in X^+$ such that
$b\in B(\lambda)$, $b'\in B(\lambda')_\nu$,
and $\zeta=\lambda-\lambda'$,
\[
\Delta_4(b\diamondsuit_\zeta b')(\eta_{\lambda}\otimes \xi_{-\lambda'})=
\pi^{\ang{\tilde\nu,\lambda}}(b\diamondsuit_4 b')_{\lambda,\lambda'}.
\]
\end{corollary}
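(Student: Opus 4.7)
The plan is to leverage the $\UU$-module isomorphism $\varphi \colon N_3(\lambda,\lambda') \to N_4(\lambda,\lambda')$ from Lemma \ref{lem:N34iso}. Because $\xi_{-\lambda'}$ has $\nu$-index $0$, one has $\varphi(\eta_\lambda \otimes \xi_{-\lambda'}) = \eta_\lambda \otimes \xi_{-\lambda'}$, and since $\varphi$ intertwines the two $\UU$-actions (hence the induced $\dotU$-actions), applying $\varphi$ to the identity of Theorem \ref{thm:cbdotU}(1) gives
\[
\Delta_4(b \diamondsuit_\zeta b')(\eta_\lambda \otimes \xi_{-\lambda'}) = \varphi\!\left( (b \diamondsuit_3 b')_{\lambda,\lambda'} \right).
\]
It therefore suffices to establish $\varphi((b \diamondsuit_3 b')_{\lambda,\lambda'}) = \pi^{\ang{\tilde\nu,\lambda}} (b \diamondsuit_4 b')_{\lambda,\lambda'}$, which I propose to verify through the uniqueness in Theorem \ref{thm:CBforN}(1) applied to $N_4(\lambda,\lambda')$: it is enough to check that $\pi^{\ang{\tilde\nu,\lambda}} \varphi((b \diamondsuit_3 b')_{\lambda,\lambda'})$ is $\Psi_4$-invariant and has leading term $b^-\eta_\lambda \otimes b'^+\xi_{-\lambda'}$ modulo $v\cL$.

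The leading-term condition is essentially automatic: Theorem \ref{thm:CBforN}(3) expresses $(b \diamondsuit_3 b')_{\lambda,\lambda'}$ as $b^-\eta_\lambda \otimes b'^+\xi_{-\lambda'}$ plus a $v\Zpv$-linear combination of other $\pi$-basis elements of $\cL$. Since $|b'| = \nu$, the map $\varphi$ scales the leading term by exactly $\pi^{\ang{\tilde\nu,\lambda}}$, while the lower-order terms remain in $v\cL$ because $\varphi$ multiplies each $\pi$-basis element by a $\pi$-sign, a unit in $\Zpv$. Multiplying through by $\pi^{\ang{\tilde\nu,\lambda}}$ and using $\pi^2 = 1$ then recovers the required leading term $b^-\eta_\lambda \otimes b'^+\xi_{-\lambda'}$ modulo $v\cL$.

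For $\Psi_4$-invariance, I would invoke the $\Delta_4$-analogue of Section \ref{subsec:R-matrix integrality}, which supplies a bar-involution $\Psi_4$ on $N_4(\lambda,\lambda')$ satisfying $\Psi_4(\Delta_4(u)x) = \Delta_4(\bar u)\Psi_4(x)$; this uses Proposition \ref{prop:otherRmats} and is flagged in the remark following Theorem \ref{thm:CBforN}. One observes that $\Psi_4(\eta_\lambda \otimes \xi_{-\lambda'}) = \eta_\lambda \otimes \xi_{-\lambda'}$, since the only term of $\Theta_4$ surviving on a pure tensor of extremal weight vectors is $\Theta_{4,0} = 1 \otimes 1$, and that $\bar{b \diamondsuit_\zeta b'} = b \diamondsuit_\zeta b'$ by Theorem \ref{thm:cbdotU}(3). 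Combining these yields $\Psi_4$-invariance of $\Delta_4(b \diamondsuit_\zeta b')(\eta_\lambda \otimes \xi_{-\lambda'}) = \varphi((b \diamondsuit_3 b')_{\lambda,\lambda'})$, and the bar-invariant scalar $\pi^{\ang{\tilde\nu,\lambda}}$ preserves this invariance. The uniqueness clause of Theorem \ref{thm:CBforN}(1) then delivers the identification. The only delicate point is the transfer of the quasi-$\mathcal{R}$-matrix formalism from $\Delta_3$ to $\Delta_4$, but this is already packaged into Proposition \ref{prop:otherRmats}, so no genuinely new obstacle arises.
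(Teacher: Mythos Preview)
Your proposal is correct and follows essentially the same route as the paper: both arguments apply the isomorphism of Lemma~\ref{lem:N34iso} to transport $\Delta_3(u)(\eta_\lambda\otimes\xi_{-\lambda'})=(b\diamondsuit b')_{\lambda,\lambda'}$ to $\Delta_4(u)(\eta_\lambda\otimes\xi_{-\lambda'})$, then check the leading-term condition modulo $v\cL$ together with $\Psi_4$-invariance (via bar-invariance of $u$ from Theorem~\ref{thm:cbdotU}(3)) and conclude by the uniqueness clause of Theorem~\ref{thm:CBforN}. Your write-up is slightly more explicit about why $\varphi$ fixes $\eta_\lambda\otimes\xi_{-\lambda'}$ and why $\Psi_4$ does as well, but the logical skeleton is identical.
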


\begin{proof}
Let $u=b\diamondsuit_\zeta b'$.
Then by definition, $\Delta(u)\eta_{\lambda}\otimes\xi_{-\lambda''}=(b\diamond b'^+)_{\lambda,\lambda'}$,
and $(b\diamondsuit b')_{\lambda,\lambda'}-b^+\eta_{\lambda}\otimes b^- \xi_{-\lambda'}\in v\cL$.
Therefore, applying the isomorphism in Lemma \ref{lem:N34iso} (which obviously preserves $\cL$),
\[\Delta_4(u)\eta_{\lambda}\otimes\xi_{-\lambda''}-\pi^{\ang{\tilde{\nu},\lambda}}b^+\eta_{\lambda}\otimes b^- \xi_{-\lambda'}\in v\cL.\]
On the other hand, $\Psi_4(\Delta_4(u)\eta_{\lambda}\otimes\xi_{-\lambda''})=\Delta_4(u)\eta_{\lambda}\otimes\xi_{-\lambda''}$
by the bar-invariance of $u$. Then by uniqueness, we have $\Delta_4(u)=\pi^{\ang{\tilde\nu,\lambda}}(b\diamondsuit_4 b')_{\lambda,\lambda'}$.
\end{proof}

\begin{example}
Suppose that $I=I_{\one}=\set{i}$. Then $\cB=\set{\pi^\epsilon \theta_i^{(a)}:a\in \N,\epsilon\in\set{0,1}}$.
Let $a,b\in \N$ and suppose that $n\geq a+b$. Then using similar computations
as in the proof of \cite[Theorem 6.2]{CW},
\begin{align*}
(\theta_i^{(a)}\diamondsuit_{n-2a} \theta_i^{(b)}&)=F_i^{(a)} E_i^{(b)}1_{n}\\
(\theta_i^{(a)}\diamondsuit_{2b-n} \theta_i^{(b)})&=\pi^{ab}E_i^{(b)} F_i^{(a)}1_{-n}.
\end{align*}
We note that the though this $\pi$-basis matches the one in {\em loc. cit.}, 
the $(\cB\times_\pi \cB)\times X$ labeling differs because
several conventions differ.
In particular, this canonical basis agrees with the categorical canonical basis produced in \cite{EL}.
\end{example}

\section{Inner product on $\dotU$}

In this section, we will construct a bilinear form on $\dotU$.
This bilinear form is $\omega$-invariant and $\rho$-invariant, and can be viewed
as a limit of bilinear forms on the vector spaces $N(\lambda,\lambda)'$.
In particular, the canonical basis is shown to be $\pi$-almost-orthonormal
with respect to this form.

\subsection{Review of automorphisms}

Let us recall some additional automorphisms of $\UU$ (and $\dotU$) which will play a role in the following computations.
We already defined $\omega$ and $\rho$ in \S\ref{subsec:cqg}.

Let $\tau_1:\UU\rightarrow \UU$ be the anti-involution defined by
\[\tau_1(E_i)=v_i^{-1}\tK_{-i} F_i,\quad \tau_1(F_i)=v_i^{-1} \tK_i E_i,\quad \tau_1(K_\mu)=K_{\mu},\quad \tau_1(J_\mu)=J_\mu.\]
We set $\bar\tau_1$ to be the map $u\mapsto \bar{\tau_1(\bar u})$; that is, the map satisfying
\[\bar\tau_1(E_i)=\pi_i v_i^{-1}F_i\tJ_i\tK_i,\quad \tau_1(F_i)=\pi_iv_i^{-1} E_i\tJ_i\tK_{-i},\quad \tau_1(K_\mu)=K_{\mu},\quad \tau_1(J_\mu)=J_\mu.\]

\begin{lemma}\label{lem:rho tau omega ids}
We have the identities
\[\rho\tau_1=\bar\tau_1\rho,\quad \omega^{-1}\tau_1=\tau_1\omega.\]
\end{lemma}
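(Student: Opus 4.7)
The plan is to verify both identities by direct computation on the algebra generators $E_i, F_i, K_\mu, J_\mu$, relying on a structural observation to avoid checking multiplicativity by hand. Namely, both $\rho$ and $\tau_1$ are anti-automorphisms of $\UU$, so $\rho\tau_1$ and $\bar\tau_1\rho$ are algebra automorphisms; and $\omega$ is an automorphism while $\tau_1$ is an anti-automorphism, so $\omega^{-1}\tau_1$ and $\tau_1\omega$ are anti-automorphisms. In each case, two maps of the same variance on $\UU$ will agree provided they agree on generators.

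For the generators $K_\mu$ and $J_\mu$, all four maps involved act by either the identity or by $\mu\mapsto-\mu$ in the obvious way, and the verifications are immediate from \eqref{eq:rhodef}, \eqref{eq:omegadef}, \eqref{eq:bardef}, and the definitions of $\tau_1$ and $\bar\tau_1$. For the nontrivial checks, I would compute each side on $E_i$ and $F_i$ explicitly. For instance, to handle $\rho\tau_1=\bar\tau_1\rho$ on $E_i$: the left side gives $\rho(v_i^{-1}\tK_{-i}F_i)=v_i^{-1}F_i\tK_i$, while the right side gives $\bar\tau_1(\pi_i\tJ_i E_i)=\pi_i\bar\tau_1(E_i)\bar\tau_1(\tJ_i)=\pi_i(\pi_i v_i^{-1}F_i\tJ_i\tK_i)\tJ_i=v_i^{-1}F_i\tK_i$, using $\tJ_i^2=1$. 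For $\omega^{-1}\tau_1=\tau_1\omega$ on $E_i$, I would first record that $\omega^{-1}(F_i)=E_i$ and $\omega^{-1}(E_i)=\pi_i\tJ_i F_i$ (deduced by applying $\omega$ to these and using $\omega^2(E_i)=\pi_i\tJ_i E_i$), then compute both sides to $v_i^{-1}\tK_i E_i$.

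The analogous $F_i$-computations follow the same template and use the standard commutation $\tJ_i F_i=F_i\tJ_i$ (which is a consequence of $\pi^{2d_i}=1$, via \eqref{eq:Jweightrels} with $\ang{d_i i,i'}=2d_i$). The main obstacle, as such, is purely bookkeeping: one must keep track of the $\pi_i$-factors introduced by $\rho$, $\omega^2$, and the dagger/bar conventions, and carefully apply the anti-automorphism rule when $\bar\tau_1$ or $\tau_1$ is fed a product. Once this bookkeeping is organized, all four generator checks reduce to one-line verifications, completing the proof.
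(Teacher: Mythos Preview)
Your proposal is correct and takes essentially the same approach as the paper: both proofs reduce to checking the identities on the generators $E_i$, $F_i$, $K_\mu$, $J_\mu$ and carry out the same one-line verifications. Your explicit variance argument (that $\rho\tau_1$, $\bar\tau_1\rho$ are automorphisms and $\omega^{-1}\tau_1$, $\tau_1\omega$ are anti-automorphisms, so agreement on generators suffices) makes explicit a point the paper leaves implicit, and your description of the $K_\mu$, $J_\mu$ case is slightly more accurate than the paper's phrasing.
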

\begin{proof}
It suffices to check these identities on the generators, and therefore, since all compositions considered fix $J_\mu$ and $K_\mu$,
we only need to check on $E_i$ and $F_i$ for $i\in I$.

Well,for $\rho\tau_1=\bar\tau_1\rho$:
\[\rho(\tau_1(E_i))=\rho(v_i^{-1}\tK_{-i} F_i)=v_i^{-1}F_i\tK_i,\quad \bar\tau_1(\rho(E_i)=\bar\tau_1(\pi_i\tJ_iE_i)=v_i^{-1}F_i\tK_i,\]
\[\rho(\tau_1(F_i))=\rho(v_i^{-1}\tK_{i} E_i)=\pi_i v_i^{-1}E_i\tJ_i\tK_{-i},\quad \bar\tau_1(\rho(F_i)=\bar\tau_1(E_i)=v_i^{-1}E_i\tJ_i\tK_{-i}.\]
Checking the left-hand equations to the right-hand equations verifies the identity.

Similarly, for $\omega^{-1}\tau_1=\tau_1\omega$:
\[
\omega^{-1}(\tau_1(E_i))=\omega^{-1}(v_i^{-1}\tK_{-i} F_i)=v_i^{-1}\tK_iE_i,\quad 
\tau_1(\omega(E_i))=\tau_1(F_i)=v_i^{-1}\tK_{i} E_i,
\]
\[
\omega^{-1}(\tau_1(F_i))=\omega(v_i^{-1}\tK_{i} E_i)=\pi_i v_i^{-1}\tJ_i\tK_{-i}F_i,\quad 
\tau_1(\omega(F_i)=\tau_1(\pi_i \tJ_i E_i)=\pi_i v_i^{-1}\tJ_i\tK_{-i}F_i.
\]
Comparing the left-hand equations to the right-hand equations verifies the identity.
\end{proof}

\subsection{The bilinear form}

Let $(\cdot,\cdot):\ff\times \ff\rightarrow \Qvp$ denote the bilinear form $\set{\cdot,\cdot}$ in \cite{CHW1}.
Let $\tau_1:\UU\rightarrow \UU$ be as defined in \cite{CHW2}, and note it induces a
homomorphism $\tau_1:\dotU\rightarrow \dotU$. Let $\bar \tau_1$ be the bar-conjugate
homomorphism.

\begin{theorem}\label{thm:dotbilinearform}
There exists a unique $\Qvp$-bilinear pairing $(\cdot,\cdot):\dotU\times \dotU\rightarrow \Qvp$
such that the following hold.
\begin{enumerate}
\item $(1_{\lambda_1} x 1_{\lambda_2},  1_{\lambda_1'}x' 1_{\lambda_2'})$ is zero unless $\lambda_1=\lambda_1'$, $\lambda_2=\lambda_2'$;
\item $(ux,y)=(x,\tau_1(u)y)$ for all $x,y\in \dotU$ and $u\in \UU$;
\item $(x^-1_\lambda, x'^- 1_\lambda)=(x,x')$ for all $x,x'\in \ff$ and all $\lambda$;
\item We have $(x,y)=(y,x)$.
\end{enumerate}
\end{theorem}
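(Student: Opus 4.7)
The plan is to adapt Lusztig's approach in \cite[\S 26.1]{L93} to the covering setting, with careful tracking of $(v, \pi)$-signs introduced by superization.

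For \emph{uniqueness}, property (1) reduces the problem to defining the form on each block $1_{\lambda_1}\dotU 1_{\lambda_2}$. By the triangular decomposition (Proposition \ref{prop:UUtriang}), every element of this block is a linear combination of terms $y^- x^+ 1_{\lambda_2}$ with $x, y \in \ff$. Iterating property (2), first with $u = y^-$ (which moves to the right of the form as $\tau_1(y^-) \in \Uz \Up$) and then, after commutation via Lemma \ref{lem:dotU commutation relations}, with $u = x^+$, I reduce any pairing to an evaluation of the form $(1_{\lambda_2}, z)$. Property (3) specialized to $x = x' = 1$ yields $(1_\lambda, 1_\lambda) = (1, 1)_\ff = 1$, and unwinding the reduction determines the form.

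For \emph{existence}, I would define the form on $1_{\lambda_1}\dotU 1_{\lambda_2}$ using the triangular decomposition: for $z = y^- x^+ 1_\zeta$ and $z' = y'^- x'^+ 1_\zeta$ in $\dotU 1_\zeta$, set
\[
(y^- x^+ 1_\zeta,\ y'^- x'^+ 1_\zeta) := c_{x, y, \zeta}\, (x, x')_\ff \cdot (y, y')_\ff,
\]
where $(\cdot, \cdot)_\ff$ is the bilinear form on $\ff$ from \cite{CHW1} and $c_{x, y, \zeta} \in \Qvp$ is a scalar normalization depending on weights and $\pi$-signs, to be pinned down by the adjointness check below. The definition is well-posed because the triangular decomposition yields a $\Qvp$-basis; property (1) follows from the weight-grading of $(\cdot, \cdot)_\ff$; property (3) follows by taking $y = y' = 1$.

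Verification of (2) reduces, by linearity and generation of $\UU$, to checking $u \in \{E_i, F_i, K_\mu, J_\mu\}$. Toral cases are automatic because $\tau_1$ fixes $K_\mu$ and $J_\mu$, and the weight data enforced by property (1) matches the scalar action on both sides. For $u = F_i$, the identity follows directly from the defining adjointness of $F_i$ on $(\ff, (\cdot, \cdot)_\ff)$. For $u = E_i$, one must commute $E_i$ past $y^-$ using the commutator relation \eqref{eq:commutatorrelation}; the resulting correction involving $\tJ_i\tK_i - \tK_{-i}$ must match a parallel correction produced when $\tau_1(E_i) = v_i^{-1}\tK_{-i}F_i$ acts on $z'$, and this matching is what fixes the $\pi$-sign in $c_{x, y, \zeta}$.

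The main obstacle is \emph{property (4), symmetry}, since the definition above privileges the $\Um\Up$-ordering and is not manifestly symmetric in $z, z'$. My plan is to introduce the auxiliary pairing $(z, z')^{\mathrm{op}} := (z', z)$, which trivially inherits (1) and (3), and to verify that it also satisfies (2). Once this is done, uniqueness forces $(\cdot, \cdot)^{\mathrm{op}} = (\cdot, \cdot)$, which is exactly (4). Establishing (2) for $(\cdot, \cdot)^{\mathrm{op}}$ amounts to comparing the form in the $\Um\Up$-order with its expression in the $\Up\Um$-order; this comparison is governed by the commutation identities of Lemma \ref{lem:dotU commutation relations}, and the $(v, \pi)$-factors produced must cancel exactly. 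Coordinating these cancellations, particularly the contributions of $\pi^{p(i)p(j)}$ and $\pi_i$ from the commutator relation against the $\dagger$/parity twists hidden in $\tau_1$, is the chief technical hurdle.
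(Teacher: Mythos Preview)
Your approach is essentially the paper's: its entire proof reads ``essentially the same as in \cite[Theorem 26.1.2]{L93},'' and you are proposing precisely to adapt that argument to the covering setting. One simplification worth noting: your verification of (4) is more laborious than necessary. Once you have established (1)--(3), property (2) for $(\cdot,\cdot)^{\mathrm{op}}$ follows in one line from the fact that $\tau_1$ is an anti-\emph{involution} (stated in the paper just before Lemma \ref{lem:rho tau omega ids}): $(ux,y)^{\mathrm{op}} = (y,ux) = (\tau_1(u)y,x) = (x,\tau_1(u)y)^{\mathrm{op}}$, with the middle equality using (2) for the original form with $\tau_1(u)$ in place of $u$ and $\tau_1^2 = \mathrm{id}$. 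No comparison of $\Um\Up$- versus $\Up\Um$-orderings or tracking of additional $\pi$-factors is needed for this step.
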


\begin{proof}
The proof is essentially the same as in \cite[Theorem 26.1.2]{L93}.
\end{proof}

\begin{proposition}
We have $(xu,y)=(x,y\bar \tau_1(u))$ for all $x,y\in \dotU,u\in \UU$.
\end{proposition}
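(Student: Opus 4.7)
The plan is to derive the identity $(xu,y)=(x,y\bar\tau_1(u))$ from the $\rho$-invariance of the bilinear form, i.e. from the auxiliary statement
\[
(\rho(x),\rho(y))=(x,y)\qquad \text{for all } x,y\in\dotU,
\]
which I shall refer to as $(\ast)$. Granting $(\ast)$, the proposition will follow from the chain of equalities
\begin{align*}
(xu,y)&=(\rho(xu),\rho(y))=(\rho(u)\rho(x),\rho(y))\\
&=(\rho(x),\tau_1(\rho(u))\rho(y))=(\rho(x),\rho(\bar\tau_1(u))\rho(y))\\
&=(\rho(x),\rho(y\bar\tau_1(u)))=(x,y\bar\tau_1(u)),
\end{align*}
applying, in order: $(\ast)$; the anti-homomorphism property of $\rho$; property~(2) of Theorem~\ref{thm:dotbilinearform}; the identity $\tau_1\rho=\rho\bar\tau_1$; the anti-homomorphism property of $\rho$ again; and $(\ast)$. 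The identity $\tau_1\rho=\rho\bar\tau_1$ is an immediate consequence of $\rho\tau_1=\bar\tau_1\rho$ in Lemma~\ref{lem:rho tau omega ids} combined with $\rho^2=\mathrm{id}$.

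The substance of the argument is thus to establish $(\ast)$. On base-case elements of the form $x^-1_\lambda$ and $y^-1_\lambda$, the formula $\rho(x^-1_\lambda)=\rho(x^-)1_{-\lambda+|x|}$, which follows from Proposition~\ref{prop:dotUmorphisms}(2) together with the commutation $1_\mu z=z\,1_{\mu-|z|}$, combined with property~(3) of Theorem~\ref{thm:dotbilinearform}, reduces $(\ast)$ to the $\rho$-invariance of the polarization on $\ff$. The latter is a standard consequence of the compatibility of $\rho$ with the canonical basis recorded in Lemma~\ref{lem:extra CB props}(2)--(3): since $\rho$ permutes $\cB$ up to a $\pi$-sign and the form is almost-orthonormal on $\cB$, the sign contributions cancel and $\rho$-invariance follows. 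To propagate $(\ast)$ to arbitrary $x,y\in\dotU$, I will argue by induction on height, using property~(2) to absorb left-multiplication by $\UU^{+}$-factors into the second argument, and invoking the symmetry~(4) of the form to transfer the result between its two arguments.

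The main obstacle is precisely this propagation step. A naive induction that applies property~(2) simultaneously in both arguments would, after tracking how $\rho$ interchanges left and right multiplication, re-encounter the very identity $(xu,y)=(x,y\bar\tau_1(u))$ that we are trying to prove. My plan to resolve this is to carry out the induction asymmetrically: fix $y$ in Verma form $y^-1_\lambda$ and induct on $x$ purely via left-multiplication by elements of $\UU$ (so that only the already-given adjunction in property~(2) is used), and then bootstrap the resulting identity in $x$ to arbitrary $y$ by means of symmetry~(4).
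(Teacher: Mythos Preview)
Your overall strategy---prove $\rho$-invariance $(\ast)$ first, then deduce the right-adjunction identity---is the reverse of the paper's logic, and the circularity you flagged is not actually resolved by the asymmetric induction you propose. Concretely, carry out your Step~1 with $y=y_1^-1_\zeta$ in Verma form and write $x=E_i x'$ with $x'$ of lower height. Left adjunction gives $(x,y)=(x',\tau_1(E_i)y)$, and since $\tau_1(E_i)y$ is again of Verma form, induction yields
\[
(x,y)=(\rho(x'),\rho(\tau_1(E_i)y))=(\rho(x'),\,\rho(y)\,\bar\tau_1(\rho(E_i))),
\]
using $\rho\tau_1=\bar\tau_1\rho$ and the anti-homomorphism property. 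To close the loop you must now match this with $(\rho(x),\rho(y))=(\rho(x')\rho(E_i),\rho(y))$, i.e.\ you need
\[
(\rho(x')\rho(E_i),\rho(y))=(\rho(x'),\,\rho(y)\,\bar\tau_1(\rho(E_i))).
\]
But this is precisely the right-adjunction identity $(au,b)=(a,b\,\bar\tau_1(u))$ for $u=\rho(E_i)$, applied to arbitrary $a=\rho(x')$ and $b=\rho(y)$. As $x'$ ranges over all elements of lower height and $y$ over all Verma-form elements, $\rho(x')$ and $\rho(y)$ range over essentially arbitrary elements, so you are invoking the very proposition you set out to prove. The symmetry~(4) does not help here: applying it simply exchanges the roles of the two arguments and reproduces the same right-multiplication obstruction. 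This is exactly why the paper proves the proposition \emph{first} by a direct computation with the differentials $r_i,{}_ir$ (reducing to the single identity~(a) for $u=E_i$), and only \emph{then} deduces $\rho$-invariance by checking that $(\rho(\cdot),\rho(\cdot))$ satisfies property~(2) of Theorem~\ref{thm:dotbilinearform}---a check which explicitly uses the just-established right adjunction.

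A smaller issue: your justification of $\rho$-invariance of the form on $\ff$ via Lemma~\ref{lem:extra CB props}(2)--(3) is insufficient. Almost-orthonormality only controls $(b,b')$ modulo $v\Zpv$; it does not pin down the off-diagonal values, so knowing $\rho$ permutes $\cB$ up to $\pi$ does not yield $(\rho(b),\rho(b'))=(b,b')$. The fact is true, but it comes from the defining adjunction of the form with respect to $r_i,{}_ir$ together with $\rho\circ r_i={}_ir\circ\rho$, not from almost-orthonormality.
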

\begin{proof}
It suffices to prove this for the generators; it is clear for $K_\mu$, $J_\mu$.
It remains to verify that
\[(xE_i,y)=(x,\pi_i v_i^{-1} y F_i\tJ_i\tK_i),\qquad (xF_i,y)=(x,\pi_i v_i^{-1} yE_i \tJ_i\tK_{-i}).\]
We may further assume that $x=u' 1_\zeta$ where $u'\in U$ and $\zeta\in X$.
Then setting $y'=\tau_1(u')y$, we see that the previous equalities follow from
\[(1_\zeta E_i, y')=(1_\zeta, \pi_i v_i^{-1} y' F_i\tJ_i\tK_i),\quad (xF_i,y)=(x,\pi_i v_i^{-1} yE_i\tJ_i\tK_{-i}).\]
Once more, we can assume that $y'=\tau_1(y_1^-)y_2^-1_\zeta'$
for homogeneous $y_1,y_2\in \ff$, so it suffices to show that
\[
(y_1^-E_i 1_{\zeta-i'}, y_2^-1_\zeta')=\pi_i^{1+\ang{i,\zeta'+i'}}v_i^{-1+\ang{i,\zeta'+i'}} (y_1^-1_\zeta, y_2^-F_i1_{\zeta'+i'}),\tag{a}
\]
\[
(y_1^-F_i 1_{\zeta+i'}, y_2^-1_\zeta')=\pi_i^{1+\ang{i,\zeta'-i'}}v_i^{-1-\ang{i,\zeta'-i'}} (y_1^-1_\zeta, y_2^-E_i1_{\zeta'-i'}).\tag{b}
\]
By symmetry, (a) and (b) are equivalent, so we shall prove (a). Then we may assume $\zeta'=\zeta-i'$
and $|y_1|=|y_2|+i$.

Recall the $\Qvp$-linear differentials ${}_ir,r_i:\ff\rightarrow \ff$ defined by ${}_i r(\theta_j)=r_i(\theta_j)=\delta_{ij}$,
\[{}_i r(xy)={}_ir(x)y+\pi^{p(x)p(i)}v^{-i\cdot |x|}x\ {}_ir(y),\qquad r_i(xy)=\pi^{p(x)p(i)}v^{-i\cdot |x|}r_i(x)y+xr_i(y).\]
Using a variant on \cite[Proposition 2.2.2]{CHW1} (essentially with $q$ replaced by $v^{-1}$) we have
\[
E_i y_1^- 1_{\zeta'}-\pi_i^{p(y_1)} y_1^- E_i 1_{\zeta'}
=\frac{\pi_i^{p(y_1)-p(i)}r_i(y_1)^-\tJ_i \tK_i   - \tK_{-i}\ {}_ir(y_1)^-}{\pi_i v_i-v_i^{-1}} 1_\zeta,
\]
from whence we see that
\[
y_1^- E_i 1_{\zeta'}
=\pi_i^{p(y_1)}E_i y_1^- 1_{\zeta'} +
\frac{\pi_ir_i(y_1)^-\tJ_i \tK_i   - \pi_i^{p(y_1)}\tK_{-i}\ {}_ir(y_1)^-}{v_i^{-1}-\pi_i v_i} 1_\zeta.
\]
Now note that
\[
(\pi_i^{p(y_1)}E_iy^-1_{\zeta'},y_2^-1_{\zeta'})=\pi_i^{p(y_1)}v_i^{1+\ang{i,|y_2|-\zeta'}} (y_1^-1_{\zeta'}, F_iy_2^-1_{\zeta'})
=\pi_i^{p(y_1)}v_i^{1+\ang{i,|y_2|-\zeta'}}(y_1,\theta_iy_2)
\]
and that
\begin{align*}
&\parens{\frac{\pi_i r_i(y_1)^-\tJ_i \tK_i   - \pi_i^{p(y_1)}\tK_{-i}\ {}_ir(y_1)^-}{v_i^{-1}-\pi_i v_i} 1_\zeta,y_2^-1_{\zeta'} }\\
&=(\theta_i,\theta_i) ( \pi_i^{1+\ang{i,\zeta'}} v_i^{ 1 + \ang{ i,\zeta' } } r_i(y_1)^- 1_{\zeta'}
-\pi_i^{p(y_1)} v_i^{ 1+ \ang{ i,|y_1|-i-\zeta' } } {}_i r(y_1)^- 1_{\zeta'},y_2^-1_{\zeta'})\\
&=\pi_i^{1+\ang{i,\zeta'}} v_i^{ 1 + \ang{ i,\zeta' } } (\theta_i,\theta_i)(r_i(y_1),y_2)
-\pi_i^{p(y_1)} v_i^{ 1+ \ang{ i,|y_1|-i-\zeta' } } (\theta_i,\theta_i)({}_i r(y_1),y_2)\\
&=\pi_i^{1+\ang{i,\zeta'}} v_i^{ 1 + \ang{ i,\zeta' } } (y_1,y_2\theta_i)
-\pi_i^{p(y_1)} v_i^{ 1+ \ang{ i,|y_1|-i-\zeta' } } (y_1,\theta_i y_2).
\end{align*}
Then we see that
\[(y_1^- E_i 1_{\zeta'},y_2^-1_\zeta)=\pi_i^{1+\ang{i,\zeta'}} v_i^{ 1 + \ang{ i,\zeta' } } (y_1,y_2\theta_i)
=\pi^{1+\ang{i,\zeta'+i'}}  v_i^{ -1 + \ang{ i,\zeta' +i'} } (y_1^-1_{\zeta'+i},y_2^-F_i 1_{\zeta'+i'}),\]
which proves (a).
\end{proof}

\begin{proposition}
We have $(\rho(x),\rho(y))=(x,y)$ for all $x,y\in \dotU$.
\end{proposition}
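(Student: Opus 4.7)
The strategy is to define
\[
B(x,y) := (\rho(x),\rho(y)), \qquad x,y\in\dotU,
\]
and verify that $B$ satisfies the four characterizing properties of the bilinear pairing $(\cdot,\cdot)$ listed in Theorem~\ref{thm:dotbilinearform}. Uniqueness in that theorem will then force $B=(\cdot,\cdot)$, which is the desired identity.

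Property~(1) is immediate: the computation $\rho(1_{\lambda_1}x1_{\lambda_2}) = 1_{-\lambda_2}\rho(x)1_{-\lambda_1}$ shows that weight-diagonality of $(\cdot,\cdot)$ transfers to weight-diagonality of $B$ (with the roles of the two weights swapped). Property~(4), symmetry, is immediate from the symmetry of $(\cdot,\cdot)$. For property~(2), I compute
\[
B(ux,y) = (\rho(x)\rho(u),\rho(y)) = (\rho(x),\rho(y)\,\bar\tau_1(\rho(u))) = (\rho(x),\rho(\tau_1(u)y)) = B(x,\tau_1(u)y),
\]
where the first equality is the definition of $B$ together with the anti-multiplicativity of $\rho$, the second uses the right-action formula $(au,b)=(a,b\bar\tau_1(u))$ established in the preceding proposition, the third combines the identity $\bar\tau_1\circ\rho = \rho\circ\tau_1$ from Lemma~\ref{lem:rho tau omega ids} with the anti-multiplicativity of $\rho$, and the fourth is the definition of $B$.

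The remaining step, property~(3), is where the real content lies. Since $\rho(F_i)=F_i$, the restriction $\rho|_{\Um}$ corresponds, under the isomorphism $\ff\cong\Um$, to the unique anti-automorphism $\sigma$ of $\ff$ fixing each $\theta_i$. Unwinding $\rho$ on an element of the form $x^-1_\lambda = 1_{\lambda-|x|}x^-$ using $\rho(1_\mu)=1_{-\mu}$ gives $\rho(x^-1_\lambda) = \sigma(x)^-1_{-\lambda+|x|}$, so by property~(3) of $(\cdot,\cdot)$,
\[
B(x^-1_\lambda,x'^-1_\lambda) = (\sigma(x)^-1_{-\lambda+|x|},\sigma(x')^-1_{-\lambda+|x'|}) = (\sigma(x),\sigma(x'))_\ff,
\]
with both sides vanishing unless $|x|=|x'|$. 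Property~(3) for $B$ therefore reduces to $\sigma$-invariance of the bilinear form on $\ff$.

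This $\sigma$-invariance is the anticipated main obstacle, and is the covering analog of \cite[Proposition~1.2.3]{L93}. It should follow by a straightforward induction on $\height|x|$ using the adjointness of left and right multiplication by $\theta_i$ with the twisted derivations $\ir,\ri$, together with the symmetry of $(\cdot,\cdot)$; a version is expected to be available in the quantum covering setting from \cite{CHW1}. Once this ingredient is in hand, $B$ satisfies all four defining properties, and the uniqueness assertion of Theorem~\ref{thm:dotbilinearform} completes the proof.
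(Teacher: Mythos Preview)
Your approach is the same as the paper's: verify that $B(x,y)=(\rho(x),\rho(y))$ satisfies the four defining properties of Theorem~\ref{thm:dotbilinearform}, handling property~(2) via the right-action formula from the preceding proposition together with $\rho\tau_1=\bar\tau_1\rho$. The only difference is emphasis: the paper singles out (2) as the sole non-obvious step and treats (1), (3), (4) as immediate, since the $\sigma$-invariance of the form on $\ff$ is a standard fact established in \cite{CHW1}; your hedging language (``should follow'', ``expected to be available'') on this point is therefore unnecessary.
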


\begin{proof}
It suffices to show that $x,y\mapsto (\rho(x),\rho'(y))$ satisfies the defining properties of $(\cdot,\cdot)$.
All of these are obvious except Theorem \ref{thm:dotbilinearform} (2).
However, this follows from the previous proposition and the fact that $\rho\tau_1=\bar{\tau}_1\rho$.
\end{proof}

\begin{lemma}\label{lem:dotbilform on Uplus}
For $x, x'\in \ff_\nu$, $\nu\in N[I]$ and $\lambda\in X$, $(x^+1_\lambda,x'^+1_\lambda)=\pi_\nu\pi^{\ang{\tilde\nu,\lambda}}(x,x')$.
\end{lemma}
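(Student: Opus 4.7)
The plan is to prove the identity by induction on $\height \nu$, with the base case $\nu = 0$ being immediate: when $x = x' = 1 \in \ff_0$, we have $x^+ 1_\lambda = 1_\lambda = x^- 1_\lambda$, so property (3) of Theorem \ref{thm:dotbilinearform} yields $(1_\lambda, 1_\lambda) = (1,1)$, matching the RHS since $\pi_0 \pi^{\ang{0,\lambda}} = 1$.

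For the inductive step, by bilinearity I may assume $x = \theta_i y$ with $y \in \ff_{\nu - i}$, so that $x^+ = E_i y^+$. Applying property (2) of the form and expanding $\tau_1(E_i) = v_i^{-1} \tK_{-i} F_i$, I would write
\[
(x^+ 1_\lambda, x'^+ 1_\lambda)
= v_i^{-1} (y^+ 1_\lambda, \tK_{-i} F_i x'^+ 1_\lambda).
\]
The next step is to commute $F_i$ past $x'^+$ using the $E \leftrightarrow F$ analogue of the commutator identity appearing in the proof of the preceding proposition, namely an identity of the form
\[
F_i x'^+ - \pi_i^{p(x')} x'^+ F_i \;=\; \frac{r_i(x')^+ \tJ_i \tK_i - \pi_i^{p(x')-p(i)} \tK_{-i}\,{}_i r(x')^+}{\pi_i v_i - v_i^{-1}},
\]
(derived by dualising the commutator formula used earlier or by direct computation using \eqref{eq:commutatorrelation} and induction on the length of $x'$ as an element of $\ff$). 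Substituting this produces two groups of terms.

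For the ``derivative'' terms involving $r_i(x')^+$ and ${}_ir(x')^+$, the inductive hypothesis applies (since both derivatives lie in $\ff_{\nu-i}$), and combining with the defining property of the bilinear form on $\ff$ (the analogue of \cite[Proposition 1.2.3]{L93} which characterises $(\cdot,\cdot)$ on $\ff$ via $(\theta_i x, y) = (\theta_i,\theta_i)(x, {}_ir(y))$ and dually) yields a contribution proportional to $(y, r_i(x'))$ or $(y, {}_ir(x'))$ times $\pi_{\nu - i}\pi^{\ang{\widetilde{\nu-i},\lambda}}$ and appropriate $v$- and $\pi$-scalars accumulated from commuting the $\tK_{\pm i}, \tJ_i$ through the $1_\lambda$ and the positive parts. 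For the surviving ``commutator'' term $(y^+ 1_\lambda, \pi_i^{p(x')} v_i^{-1}\tK_{-i} x'^+ F_i 1_\lambda)$, I would apply the preceding proposition $(xu,y)=(x,y\bar\tau_1(u))$ (with $u = F_i$) to move $F_i$ back across onto $y^+ 1_\lambda$, reducing it to a pairing of elements of type $(y'^+ 1_{\lambda''}, x'^+ 1_{\lambda''})$ with $y' \in \ff_\nu$ but of strictly smaller ``$y$-complexity'', to which the inductive hypothesis or a further rewriting applies.

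The main obstacle is bookkeeping: each commutation of $\tK_{-i}$, $\tJ_i$ past $x'^+$ and past $1_\lambda$ produces $v$- and $\pi$-powers depending on $\ang{i,\nu'}$, $\ang{i,\lambda}$, $\nu\cdot i$ and $p(x')$, and one must verify that after summing the derivative contributions (which are governed by the form on $\ff$) and the commutator contribution, the total scalar collapses precisely to $\pi_\nu \pi^{\ang{\tilde\nu,\lambda}}$. The key simplifications come from $\pi_j^{\ang{j,i'}}= \pi^{i\cdot j}=1$ (by the bar-consistent anisotropic condition \eqref{eq:even}) and from $\pi^{\ang{\tilde\nu,\nu'}}= \pi^{\nu\cdot\nu}=1$, which are exactly the identities that allowed the analogous clean formula $\omega^2(x^+ 1_\lambda) = \pi_\nu \pi^{\ang{\tilde\nu,\lambda}} x^+ 1_\lambda$ to emerge, giving conceptual confirmation that the stated scalar is correct.
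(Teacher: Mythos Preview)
Your induction has a genuine gap in the ``commutator term''. After applying the right-adjunction $(xu,y)=(x,y\bar\tau_1(u))$ with $u=F_i$, the term $(y^+1_\lambda,\ \text{scalar}\cdot x'^+F_i1_\lambda)$ becomes (up to scalars) $((y\theta_i)^+1_{\lambda-i'},\ x'^+1_{\lambda-i'})$, since $\bar\tau_1(F_i)=\pi_iv_i^{-1}E_i\tJ_i\tK_{-i}$ and $y^+E_i=(y\theta_i)^+$. But $y\theta_i\in\ff_\nu$: you are back at the \emph{same} height $\nu$, with the first argument changed from $\theta_iy$ to $y\theta_i$ and the weight shifted. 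Your appeal to ``strictly smaller $y$-complexity'' is undefined, and there is no evident well-founded quantity that decreases. Iterating the procedure produces an infinite chain of pairings at height $\nu$ rather than a terminating reduction, so the inductive scheme as written does not close.

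The paper avoids this entirely by exploiting the $\rho$-invariance of $(\cdot,\cdot)$ proved immediately before the lemma. Starting from $(x^+1_\lambda,x'^+1_\lambda)=(1_\lambda,\tau_1(x^+)x'^+1_\lambda)$, one applies $\rho$ to both slots, uses the identity $\rho\tau_1=\bar\tau_1\rho$ from Lemma~\ref{lem:rho tau omega ids}, and then shifts back via property~(2) and the $\bar\tau_1$-adjunction to obtain $(\tau_1\rho(x'^+)1_{-\lambda},\ \bar\tau_1\rho(x^+)1_{-\lambda})$. The point is that for $x\in\ff_\nu$ one has closed formulas $\tau_1\rho(x^+)=\pi_\nu v_\nu v^{\bullet(\nu)}\rho(x)^-\tJ_\nu\tK_{-\nu}$ and $\bar\tau_1\rho(x^+)=v_{-\nu}v^{-\bullet(\nu)}\rho(x)^-\tK_\nu$, so both slots land in $\UU^-$ and property~(3) applies directly, with no induction at all. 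Your closing remark about the scalar matching $\omega^2$ is in fact the shadow of this idea: the antiautomorphism $\rho$ (not an induction) is what converts the $+$-side pairing to a $-$-side pairing in one stroke.
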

\begin{proof}
We have
\[(x^+1_\lambda, x'^+1_\lambda)=(1_\lambda,\tau_1(x^+)x'^+1_\lambda)
=(1_{-\lambda},1_{-\lambda}\rho(x'^+)\rho\tau_1(x^+))
=(1_{-\lambda},\rho(x'^+)\bar\tau_1\rho(x^+)1_{-\lambda}).\]
Then we may rearrange to obtain
\[(x^+1_\lambda, x'^+1_\lambda)=(\tau_1\rho(x'^+)1_{-\lambda},\bar\tau_1\rho(x^+)1_{-\lambda}).\]

Now note that for any $x\in \ff_\nu$, 
$\tau_1\rho(x^+)=\pi_\nu v_{\nu} v^{\bullet(\nu)} \rho(x)^-\tJ_\nu\tK_{-\nu}$
where $\bullet(i_1+\ldots+i_n)=\sum_{s<t} i_s\cdot i_t$. 
Likewise, noting that $\bullet(\nu)\in 2\Z$, we obtain
$\bar\tau_1\rho(x^+)=v_{-\nu} v^{-\bullet(\nu)} \rho(x)^-\tK_{\nu}$.
Then
\[(\tau_1\rho(x'^+)1_{-\lambda},\bar\tau_1\rho(x^+)1_{-\lambda})
=\pi_\nu\pi^{\ang{\tilde\nu,\lambda}}(x,x')
\]
\end{proof}

\begin{proposition}
We have $(\omega(x),\omega^{-1}(y))=(x,y)$ for all $x,y\in \dotU$.
\end{proposition}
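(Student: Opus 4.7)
The plan is to show that the sesquilinear pairing $B(x,y) := (\omega(x), \omega^{-1}(y))$ satisfies the three defining properties of $(\cdot,\cdot)$ from Theorem \ref{thm:dotbilinearform}; the uniqueness asserted there will then force $B = (\cdot,\cdot)$, which is the desired identity. This is a direct analogue of the strategy used in the preceding proposition to show $\rho$-invariance.

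Verification of condition (1) is immediate: since $\omega(1_\lambda) = 1_{-\lambda} = \omega^{-1}(1_\lambda)$ by Proposition \ref{prop:dotUmorphisms}, the product $1_{\lambda_1} x 1_{\lambda_2}$ is sent to $1_{-\lambda_1} \omega(x) 1_{-\lambda_2}$ under $\omega$, and similarly for $\omega^{-1}$, so the vanishing condition carries over under negation of weights. Condition (2) is where Lemma \ref{lem:rho tau omega ids} enters: computing
\[
B(ux,y) = (\omega(u)\omega(x), \omega^{-1}(y)) = (\omega(x), \tau_1(\omega(u))\omega^{-1}(y))
\]
by property (2) of $(\cdot,\cdot)$, and then applying the identity $\tau_1 \omega = \omega^{-1}\tau_1$, we get $\tau_1(\omega(u)) = \omega^{-1}(\tau_1(u))$, hence the right-hand side equals $(\omega(x), \omega^{-1}(\tau_1(u)y)) = B(x, \tau_1(u)y)$.

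The substantive step is condition (3), namely $B(x^- 1_\lambda, x'^- 1_\lambda) = (x,x')$. Since $\omega^{-1}(F_i) = E_i$ (immediate from $\omega(E_i)=F_i$), it follows inductively that $\omega^{-1}(x'^-) = x'^+$ for all $x' \in \ff$, so the problem reduces to evaluating $(\omega(x^-) 1_{-\lambda}, x'^+ 1_{-\lambda})$. I would establish, by induction on $\height(\nu)$ for $x \in \ff_\nu$, the formula
\[
\omega(x^-) 1_{-\lambda} = \pi_\nu \pi^{\ang{\tilde\nu,\lambda}} \, x^+ 1_{-\lambda}
\]
in $\dotU$. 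The base case $x=1$ is trivial; for the inductive step one writes $\omega((\theta_i x)^-) = \pi_i \tJ_i E_i\, \omega(x^-)$ and then uses the braiding $\tJ_\mu E_j = \pi^{\ang{\tilde\mu,j'}} E_j \tJ_\mu$ together with $\tJ_\mu 1_\zeta = \pi^{\ang{\tilde\mu,\zeta}} 1_\zeta$ to collect all the $\tJ$'s against the idempotent, invoking the anisotropic and bar-consistency conditions on $(I,\cdot)$ to guarantee that the accumulated $\pi$-signs collapse to the asserted formula. Granted this identity, Lemma \ref{lem:dotbilform on Uplus} gives
\[
B(x^- 1_\lambda, x'^- 1_\lambda) = \pi_\nu \pi^{\ang{\tilde\nu,\lambda}} (x^+1_{-\lambda}, x'^+ 1_{-\lambda}) = \pi_\nu^2 \pi^{2\ang{\tilde\nu,\lambda}} (x,x') = (x,x'),
\]
completing the verification.

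The main obstacle is the bookkeeping of $\pi$-factors in the inductive identity for $\omega(x^-)1_{-\lambda}$: while individual monomial expansions of $x$ pick up signs from the non-trivial commutators between the $\tJ$'s and the $E$'s, the combination of the bar-consistency condition ($d_i \equiv p(i) \pmod 2$) and the anisotropic condition on the datum is what guarantees that the resulting scalar is independent of the chosen monomial and depends only on the weight $\nu$ and the parameter $\lambda$. Once this is in hand, the remaining manipulations are mechanical.
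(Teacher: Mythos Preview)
Your approach is essentially identical to the paper's: both verify that $B(x,y)=(\omega(x),\omega^{-1}(y))$ satisfies the defining properties of $(\cdot,\cdot)$, using $\tau_1\omega=\omega^{-1}\tau_1$ for (2), the formula $\omega(x^-)1_{-\lambda}=\pi_\nu\pi^{\ang{\tilde\nu,\lambda}}x^+1_{-\lambda}$ together with $\omega^{-1}(x'^-)=x'^+$ and Lemma~\ref{lem:dotbilform on Uplus} for (3). Two small remarks: the form is $\Qvp$-\emph{bilinear}, not sesquilinear; and Theorem~\ref{thm:dotbilinearform} lists four properties, not three --- the paper additionally checks symmetry (4) via $\omega^2(x)=\pi_\nu\tJ_\nu x$, though you are right that uniqueness already follows from (1)--(3) alone, so your omission is harmless.
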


\begin{proof}
It suffices to show that $x,y\mapsto (\omega(x),\omega^{-1}(y))$ satisfies the defining properties of $(\cdot,\cdot)$.
It is clear that (1) holds. 
Since $\omega\tau_1=\tau_1\omega$, it is clear that (2) is satisfied.
Note that $\omega(x^-1_\lambda)=\pi_{\nu} \pi^{\ang{\tilde{\nu},\lambda}} x^+1_\lambda$,
while $\omega^{-1}(x'^-1_\lambda)=x'^+1_\lambda$ for each $x,x'\in \ff_\nu$, whence
(3) holds by the previous lemma. 
Finally, we note that $\omega^2(y)=\pi_\nu\pi^{\ang{\tilde{\nu},\lambda}} x$
for any $x\in \dotU 1_\lambda$, and hence $(\omega^2(x),\omega^2(y))=(x,y)$ for any $x,y\in \dotU$,
proving (4).
\end{proof}

\begin{example}
We compute the following inner products. Let $\lambda_i=\ang{i,\lambda}$.
\begin{align*}
&(F_i^{(k)}1_\lambda,F_i^{(k)}1_\lambda)=\pi^{\binom{k}{2}}\prod_{s=1}^k \frac{1}{1-(\pi_i v_i^2)^s}\\
&(E_i^{(k)}1_\lambda,E_i^{(k)}1_\lambda)=\pi_i^{\binom{k+1}{2}+k\lambda_i}\prod_{s=1}^k \frac{1}{1-(\pi_i v_i^2)^s}\\
&(E_iF_i1_\lambda, 1_\lambda)=\frac{v_i^{1-\lambda_i}}{1-\pi_i v_i^2}\\
&(E_iF_i1_\lambda, E_iF_i1_\lambda)=\pi^{\lambda_i-1}\frac{1+(\pi_i v_i^2)^{1-\lambda_i}}{(1-\pi_iv_i^2)^2}\\
&(E_iF_i1_\lambda, F_iE_i1_\lambda)=\pi^{\lambda_i}\frac{1+\pi_i v_i^2}{(1-\pi_iv_i^2)^2}
\end{align*}

We note that under the identification $q^2=\pi v^2$, these values
are formally similar to the values of the analogous bilinear form on $\dotU|_{\pi=1}$
over $\Q(q)$ (but with an additional factor of some power of $\pi$).

\if 0\begin{align*}
(E_iF_i1_\lambda, E_iF_i1_\lambda)
&=v^{1-\lambda_i}(F_i1_\lambda, F_iE_iF_i1_\lambda)\\
&=\pi v^{1-\lambda_i}(F_i1_\lambda,E_iF_i^2-[\lambda_i-2]_iF_i1_\lambda)\\
&=\pi v^{4-2\lambda_i}(F_i^21_\lambda, F_i^21_\lambda)-\pi v^{1-\lambda_i}[\lambda_i-2]_i(F_i1_\lambda, F_i1_\lambda)\\
&=v_i^{4-2\lambda_i}\frac{1}{1-\pi_iv_i^2}\frac{1}{1-v_i^4}[2]_i^2-\pi v^{1-\lambda_i}[\lambda_i-2]_i\frac{1}{1-\pi_iv_i^2}\\
&=\frac{1}{1-\pi_iv_i^2}\frac{1}{1-v_i^4}\parens{
v_i^{4-2\lambda_i} [2]_i^2 - \pi_i v_i^{1-\lambda_i}(1-v_i^4)\frac{(\pi_iv_i)^{\lambda_i-2}-v_i^{2-\lambda_i}}{\pi_iv_i-v_i^{-1}} 
}\\
&=\frac{1}{1-\pi_iv_i^2}\frac{1}{1-v_i^4}[2]\parens{
v_i^{4-2\lambda_i} [2]_i+ \pi_i v_i^{3-\lambda_i}((\pi_iv_i)^{\lambda_i-2}-v_i^{2-\lambda_i}) 
}\\
&=\frac{1}{1-\pi_iv_i^2}\frac{1}{1-v_i^4}[2]\parens{
\pi_i v_i^{5-2\lambda_i}+v_i^{3-2\lambda_i}+ \pi_i^{\lambda_i-1} v_i-\pi_i v_i^{5-2\lambda_i} 
}\\
&=\frac{1}{1-\pi_iv_i^2}\frac{1}{1-v_i^4}[2]\parens{
v_i^{3-2\lambda_i}+ \pi_i^{\lambda_i-1} v_i
}\\
&=\pi^{\lambda_i-1}\frac{1}{1-\pi_iv_i^2}\frac{1}{1-v_i^4}(1+\pi v_i^2)(1+(\pi_i v_i^2)^{1-\lambda_i})\\
&=\pi^{\lambda_i-1}\frac{1+(\pi_i v_i^2)^{1-\lambda_i}}{(1-\pi_iv_i^2)^2}
\end{align*}
\fi 
\end{example}

\begin{proposition}\label{prop:altdotbilform}
Let $\tau_1'$ be the $\Qvp$-linear anti-automorphism of $\UU$ defined by
\[\tau'_1(E_i)= v_i \tK_i F_i,\quad \tau'_1(F_i)=v_i\tK_i^{-1} E_i,\quad \tau'_1(K_\nu)=K_\nu,\quad \tau'_1(J_\nu)=J_\nu.\]
Then there is a $\Qvp$-bilinear pairing $(\cdot,\cdot)':\dotU\times\dotU\rightarrow \Qvp$ 
such that the following hold.
\begin{enumerate}
\item $(1_{\lambda_1} x 1_{\lambda_2},  1_{\lambda_1'}x' 1_{\lambda_2'})'$ is zero unless $\lambda_1=\lambda_1'$, $\lambda_2=\lambda_2'$;
\item $(ux,y)'=(x,\tau_1'(u)y)'$ for all $x,y\in \dotU$ and $u\in \UU$;
\item $(x^-1_\lambda, x'^- 1_\lambda)'=\bar{(\bar x,\bar{x'})}$ for all $x,x'\in \ff$ and all $\lambda$;
\item We have $(x,y)'=(y,x)'$.
\end{enumerate}

Moreover, we have $(\omega(x),\omega^{-1}(y))'=(\rho(x),\rho(y))'=(x,y)'$.
\end{proposition}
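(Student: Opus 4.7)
The plan is to mirror the construction and arguments carried out for $(\cdot,\cdot)$ earlier in this section, systematically twisted by the bar involution. The key structural input, which I would verify by direct computation on the generators $E_i, F_i, K_\nu, J_\nu$, is the identity
\[
\tau_1' \;=\; \omega^2 \circ \bar\tau_1
\]
of $\Qvp$-linear anti-automorphisms of $\UU$, where $\bar\tau_1(u) := \overline{\tau_1(\bar u)}$ as before. This identity is consistent with the fact that $\omega^4 = 1$ on $\UU$ (since $\omega^2$ acts on $\UU_\nu$ as multiplication by $\pi_\nu \tJ_\nu$) and that $\omega^2$ commutes with both $\bar{\phantom x}$ and $\tau_1$, as a quick check on generators confirms.

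For existence and uniqueness of $(\cdot,\cdot)'$, I would repeat the proof of Theorem \ref{thm:dotbilinearform} (i.e., Lusztig's \cite[Theorem 26.1.2]{L93} in the covering setting) with two substitutions: replace $(\cdot,\cdot)|_{\ff}$ in property (3) by the bar-conjugated symmetric $\Qvp$-bilinear form $\{x,x'\} := \overline{(\bar x, \bar{x'})}$ on $\ff$, and replace $\tau_1$ in property (2) by $\tau_1'$. The consistency of the Verma module construction carries over because the compatibility needed between $\{,\}$ and $\tau_1'$ is the $\bar{\phantom x}$-conjugate of the compatibility between $(\cdot,\cdot)$ and $\tau_1$ twisted by $\omega^2$; the $\omega^2$-factor contributes only weight-dependent $\pi$-scalars that can be tracked along the triangular decomposition using the already established $\omega$-invariance of $(\cdot,\cdot)$. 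Properties (1), (3), (4) fall out of the construction, while (2) is built in by the choice of $\tau_1'$.

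For the moreover clause I would run uniqueness arguments paralleling those used for $(\cdot,\cdot)$. First, verify $\omega^{-1}\tau_1' = \tau_1'\omega$ on generators --- the direct analog of Lemma \ref{lem:rho tau omega ids}, which survives the twist because $\omega^2$ commutes with $\tau_1$. Then the pairing $(x,y) \mapsto (\omega(x), \omega^{-1}(y))'$ satisfies the defining properties (1)--(4) of $(\cdot,\cdot)'$, and uniqueness forces equality. For $\rho$-invariance, verify $\rho\tau_1' = \bar\tau_1'\rho$ on generators (both sides equal $\tau_1$ on $E_i$, by a short computation); then establish the right-multiplication compatibility $(xu,y)' = (x, y\bar\tau_1'(u))'$ by the same derivation-based argument used in the proposition asserting $(xu, y) = (x, y\bar\tau_1(u))$ for $(\cdot,\cdot)$, applied verbatim with $\tau_1$ replaced by $\tau_1'$. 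The pairing $(\rho(x), \rho(y))'$ then matches the defining properties, and uniqueness gives the identity.

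The main obstacle is the existence step: translating the construction of Theorem \ref{thm:dotbilinearform} through the bar twist while tracking the $\pi$-scalars introduced by the $\omega^2$-factor in the identity $\tau_1' = \omega^2\bar\tau_1$. Concretely, one has to verify that the twisted property (3) on $\Um\cdot 1_\lambda$ extends consistently to all of $\dotU$ through the triangular decomposition, with the required compatibility coming from the original relation $(ux, y) = (x, \tau_1(u) y)$ for $(\cdot,\cdot)$ after conjugating by $\bar{\phantom x}$ and absorbing the $\omega^2$-factor via $\omega$-invariance. Once this bookkeeping is organized with the relation $\tau_1' = \omega^2 \bar\tau_1$ as the translation dictionary, every substep reduces to applying $\bar{\phantom x}$ to the corresponding step for $(\cdot,\cdot)$.
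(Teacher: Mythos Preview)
Your approach is plausible and the key identity $\tau_1'=\omega^2\circ\bar\tau_1$ is indeed correct, but it is genuinely different from, and considerably more laborious than, the paper's argument. The paper does not rerun the construction of Theorem~\ref{thm:dotbilinearform} at all. Instead it writes down the explicit formula
\[
(x,y)'=\overline{(\bar x^{\dagger},\bar y^{\dagger})}^{\dagger}
\]
in terms of the already-constructed form $(\cdot,\cdot)$ and the involutions $\bar{\phantom{x}}$ and $\dagger$ of Lemma~\ref{lem:daggerU}. Properties (1) and (4) are then immediate; (2) follows from the single compatibility $\overline{\tau_1'(u)}^{\dagger}=\tau_1(\bar u^{\dagger})$, checked on generators; (3) reduces to showing that $(x^{\dagger},y^{\dagger})^{\dagger}=(x,y)$ on $\ff$, which holds because the $\dagger$-conjugated form satisfies the defining properties of $(\cdot,\cdot)$. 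The $\omega$- and $\rho$-invariances then follow from the identities $\omega(\bar u^{\dagger})=\overline{\omega^{-1}(u)}^{\dagger}$ and $\rho(\bar u^{\dagger})=\overline{(\omega\rho\omega^{-1})(u)}^{\dagger}$ together with the corresponding invariances of $(\cdot,\cdot)$.

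What the paper gains is that $\dagger$ is precisely the symmetry interchanging the two $(v,\pi)$-integer conventions (cf.\ Remark~\ref{remark:altdefquantgp}), so conjugation by $\bar{\phantom{x}}\circ\dagger$ automatically carries $\tau_1$ to $\tau_1'$ and packages all the $\pi$-scalar bookkeeping you describe into a single formula; nothing needs to be rebuilt. Your route, by contrast, never invokes $\dagger$ and instead proposes to redo Lusztig's Verma-module construction with the bar-conjugated form $\{x,x'\}=\overline{(\bar x,\bar{x'})}$ and to absorb the residual $\omega^2$-factor through the $\omega$-invariance of $(\cdot,\cdot)$. This should work, but the step where the $\omega^2$-twist is absorbed is exactly where your sketch is vaguest: the form $\overline{(\bar\cdot,\bar\cdot)}$ on $\dotU$ is naturally adjoint to $\bar\tau_1$, not to $\tau_1'=\omega^2\bar\tau_1$, so you would still need to exhibit a concrete modification (e.g.\ an explicit weight-dependent $\pi$-rescaling of the form) that converts $\bar\tau_1$-adjointness into $\tau_1'$-adjointness while preserving (3) and (4). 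The paper's use of $\dagger$ sidesteps this entirely.
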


\begin{proof}
For $x,y\in \dotU$, set $(x,y)'=\bar{(\bar x^\dagger,\bar y^\dagger)}^\dagger$. This gives us a $\Qvp$-bilinear pairing which
clearly satisfies (1) and (4). (2) follows from the observation that $\bar{\tau_1}'(\bar{u}^\dagger)^\dagger=\tau_1(u)$.

For (3), by Theorem \ref{thm:dotbilinearform} (3) we have $(x^-1_\lambda, x'^-1_\lambda)'=(x^\dagger,x'^\dagger)^\dagger$.
However, it is easy to check that the bilinear form $(\cdot,\cdot)':\ff\times \ff\rightarrow \Qvp$ defined by 
$(x,y)'=(x^\dagger,y^\dagger)^\dagger$ for $x,y\in \ff$ satisfies the defining properties of $(\cdot,\cdot)$, hence
$(x,y)'=(x,y)$.

Lastly, we note that $\omega(\bar u^\dagger)=\bar{\omega^{-1}(u)}^\dagger$ and $\rho(\bar u^\dagger)=\bar{(\omega\rho\omega^{-1})(u)}^\dagger$,
so $\rho$- and $\omega$-invariance follows from the properties of $(\cdot,\cdot)$.
\end{proof}

\begin{remark}\label{remark:dagbilform}
Suppose $I=I_\one=\set{i}$.  In \cite{CW}, the bilinear form $(\cdot,\cdot)'$ is defined on $\dotU$.
With respect to this form, $(E^{(a)}1_n, E^{(a)}1_n)'$ is not $\pi$-almost-orthonormal (with respect
to $v^{-1}$) in general, a fact which is not desirable from a categorification standpoint. 
(However, in light of Remark \ref{remark:altdefquantgp}, 
Proposition \ref{prop:altdotbilform} demonstrates that $(\cdot,\cdot)'$ is well suited to the $\dagger$-twisted $\dotU$.)
In this regard, the bilinear form defined in Theorem \ref{thm:dotbilinearform} is a better choice.

Alternatively, Ellis and Lauda \cite{EL} have used a dagger-sesquilinear variant of $(\cdot,\cdot)'$ in their categorification.
Specifically, the Ellis-Lauda form $(\cdot,\cdot)_{\rm EL}$
seems to relate to ours via \[(x,y)_{\rm EL}=(\omega(x)^\dagger,\omega^{-1}(y))'.\]
\end{remark}

\subsection{The bilinear form as a limit}

Let $\zeta\in X$ and $\lambda,\lambda'\in X^+$ such that $\lambda-\lambda'=\zeta$.
From \cite{CHW2}, the module $V(\lambda)$ is equipped with a bilinear form
$(\cdot,\cdot)_\lambda$ such that \[(ux,y)_\lambda=(x,\tau_1(u)y)_\lambda\text{ for all } x,y\in V(\lambda),u\in \UU.\]
Such a bilinear form is called a polarization.
It is also shown in {\em loc. cit.} that while there is no natural polarization
on $V(\lambda)\otimes V(\lambda')$, there is an induced bilinear pairing
between $V(\lambda)\otimes V(\lambda')$ and $V(\lambda)\otimes_4 V(\lambda')$,
called a $J$-polarization; that is, a bilinear form 
$(\cdot,\cdot):(V(\lambda)\otimes V(\lambda'))\times (V(\lambda)\otimes V(\lambda'))\rightarrow \Qvp$ satisfying
\[(\Delta_3(u)x,y)=(x,\Delta_4(\tau_1(u))y)\text{ for all } x,y\in V(\lambda),u\in \UU.\]
We will use the following variant of this construction.
\begin{proposition}
Consider the bilinear pairing $(\cdot,\cdot)_{\lambda,\lambda'}: N(\lambda,\lambda')\times N(\lambda,\lambda')\rightarrow \Qvp$ 
defined by setting
\[(x\otimes x', y\otimes y')_{\lambda,\lambda'}=\pi_{\nu}\pi^{\ang{\tilde\nu,\lambda'}}(x,y)_\lambda(x',y')_{\lambda'}
\]
for homogeneous
$x,x'\in V(\lambda)$ and $y, y'\in V(\lambda')$ and $y\in V(\lambda')_{\lambda-\nu'}$
for some $\nu\in \N[I]$. Then $(\cdot,\cdot)_{\lambda,\lambda'}$
is a $J$-polarization.
\end{proposition}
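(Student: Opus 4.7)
The plan is to verify the $J$-polarization identity $(\Delta_3(u)X,Y)_{\lambda,\lambda'}=(X,\Delta_4(\tau_1(u))Y)_{\lambda,\lambda'}$ by reduction to the generators of $\UU$. Both sides are $\Qvp$-linear in $u$, so it suffices to check the identity on a generating set and show it is preserved under multiplication. The latter is formal: if the identity holds for $u_1$ and $u_2$, then, using that $\Delta_s$ is an algebra homomorphism and $\tau_1$ is an antihomomorphism, one has
\[
(\Delta_3(u_1u_2)X,Y)=(\Delta_3(u_2)X,\Delta_4(\tau_1(u_1))Y)=(X,\Delta_4(\tau_1(u_2))\Delta_4(\tau_1(u_1))Y)=(X,\Delta_4(\tau_1(u_1u_2))Y).
\]
Hence we are reduced to verifying the identity for $u\in\set{E_i,F_i,K_\mu,J_\mu:i\in I,\mu\in Y}$.

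For $u=K_\mu$ or $u=J_\mu$, the coproducts satisfy $\Delta_3(u)=\Delta_4(u)$, $\tau_1$ fixes these generators, and the action only rescales homogeneous vectors by $v$- and $\pi$-powers depending on weights. The identity then reduces to the polarization property of the forms $(\cdot,\cdot)_\lambda$ and $(\cdot,\cdot)_{\lambda'}$ on each tensor factor, together with the observation that the prefactor $\pi_\nu\pi^{\ang{\tilde\nu,\lambda'}}$ is unchanged (as $K_\mu,J_\mu$ preserve weights). For $u=E_i$ and $u=F_i$, one expands both sides using the explicit formulas
\[
\Delta_3(E_i)=E_i\otimes\tK_{-i}+1\otimes E_i,\qquad \Delta_4(\tau_1(E_i))=v_i^{-1}\Delta_4(\tK_{-i}F_i)=v_i^{-1}\tK_{-i}F_i\otimes\tK_{-i}+v_i^{-1}\otimes\tK_{-i}F_i,
\]
and the parallel expressions for $F_i$. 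One then applies the polarization identities $(E_ix,y)_\lambda=(x,v_i^{-1}\tK_{-i}F_iy)_\lambda$ on each factor (and the corresponding identities on ${}^\omega V(\lambda')$) so that both sides become sums of terms of the form $(x,y)_\lambda(x',y')_{\lambda'}$ multiplied by $\pi$- and $v$-powers.

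The main obstacle is the bookkeeping of $\pi$-factors arising from three sources: the super-twist $\pi^{p(a)p(b)}$ when multiplying in the tensor algebra, the weight-dependent $\pi$-powers produced by the $\tJ_i,\tK_i$ factors appearing in $\Delta_3$ versus $\Delta_4$, and the prefactor $\pi_\nu\pi^{\ang{\tilde\nu,\lambda'}}$ itself. Conceptually, the prefactor is designed so that when acting by $E_i$ or $F_i$ shifts the weight index $\nu$ of the ${}^\omega V(\lambda')$-factor by $\mp i$, the resulting change $\pi_{\nu\mp i}\pi^{\ang{\widetilde{\nu\mp i},\lambda'}}/\pi_\nu\pi^{\ang{\tilde\nu,\lambda'}}$ precisely cancels the extra $\tJ_i$ that appears in $\Delta_4$ but not in $\Delta_3$ (equivalently, the twisting factor $\pi^{\ang{\tilde\nu,\lambda}}$ from Lemma~\ref{lem:N34iso} plus a parity correction). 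Once this weight shift is tracked, the two expansions agree term-by-term. The case of $F_i$ is completely analogous; the assertion that the form is bilinear (well-defined by extension from homogeneous vectors) is automatic once the identity is established on generators.
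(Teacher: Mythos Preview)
Your proposal is correct and follows essentially the same approach as the paper, which simply states that this is ``an elementary verification akin to \cite[Proof of Lemma 4.9]{CHW2}'' with the $\pi_\nu\pi^{\ang{\tilde\nu,\lambda'}}$ factor explained by Lemma~\ref{lem:dotbilform on Uplus}. You have supplied precisely the details the paper omits: reduction to generators, the multiplicativity argument via the anti-homomorphism $\tau_1$, and the observation that the weight-dependent $\pi$-prefactor is engineered to absorb the extra $\tJ_i$ distinguishing $\Delta_4$ from $\Delta_3$.
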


\begin{proof}
This is an elementary verification akin to \cite[Proof of Lemma 4.9]{CHW2}; we note that
the $\pi_{\nu}\pi^{\ang{\tilde \nu,\lambda'}}$ factor is related to Lemma \ref{lem:dotbilform on Uplus}.
\end{proof}

\begin{proposition}
Let $x,y\in \dotU 1_\zeta$. When the pair $\lambda,\lambda'$ tends to $\infty$ (in the sense that $\ang{i,\lambda}$
tends to $\infty$ for all $i$), the inner product $(x(\eta_{\lambda}\otimes \zeta_{-\lambda'}),y(\eta_{\lambda}\otimes \zeta_{-\lambda'}))_{\lambda,\lambda'}$
converges in $\Q^\pi((v))$ to $(x,y)$.
\end{proposition}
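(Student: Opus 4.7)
The plan is to prove convergence by reducing to basis elements and performing an asymptotic expansion. By bilinearity of both sides, it suffices to treat $x=b_1^-b_2^+1_\zeta$ and $y=c_1^-c_2^+1_\zeta$ for canonical basis elements $b_i,c_i\in\cB$, since elements of this shape span $\dotU 1_\zeta$ over $\Qvp$.

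Next, I would compute $x(\eta_\lambda\otimes\xi_{-\lambda'})$ modulo $v$-adically small terms using $\Delta=\Delta_3$. Since $E_i\eta_\lambda=0$ and, under the twisted action, $F_i\xi_{-\lambda'}=\omega(F_i)\eta_{\lambda'}=\pi_i\tJ_iE_i\eta_{\lambda'}=0$, an induction on the lengths of $b_2^+$ and then $b_1^-$, using $\Delta(E_i)=E_i\otimes\tK_{-i}+1\otimes E_i$ and $\Delta(F_i)=F_i\otimes 1+\tJ_i\tK_i\otimes F_i$, yields
\[
x(\eta_\lambda\otimes\xi_{-\lambda'})=b_1^-\eta_\lambda\otimes b_2^+\xi_{-\lambda'}+R_x(\lambda,\lambda'),
\]
where every summand of $R_x$ involves at least one application of the $\tJ_i\tK_i\otimes F_i$ term, hence a factor $\pi^{\ang{\tilde i,\mu}}v^{d_i\ang{i,\mu}}$ with $\mu$ of the form $\lambda-\text{(fixed)}$. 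Thus each term of $R_x$ has $v$-valuation bounded below by a quantity tending to $+\infty$ as $\ang{i,\lambda}\to\infty$ for all $i$; since the sum is finite, $R_x\to 0$ in the $v$-adic topology on $\Q^\pi((v))$.

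Substituting into the $J$-polarization formula and using the corresponding expansion for $y$, the mixed terms involving $R_x$ or $R_y$ vanish in the limit, and we are left with
\[
\pi_{\nu_2}\pi^{\ang{\tilde\nu_2,\lambda'}}\,(b_1^-\eta_\lambda,c_1^-\eta_\lambda)_\lambda\,(b_2^+\xi_{-\lambda'},c_2^+\xi_{-\lambda'})_{\lambda'},
\]
where $\nu_2=|b_2|$. By the covering-group analogue of \cite[Prop.~26.2.2]{L93} (immediate from the behaviour of the Shapovalov form on Verma modules, since $M(\lambda)\cong\ff$ as vector spaces), the first polarization factor converges to the $\ff$-form value $(b_1,c_1)$. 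Transporting the second factor through the $\omega$-twist identification $\xi_{-\lambda'}\leftrightarrow \eta_{\lambda'}$ and using that $\omega(b_2^+)=b_2^-$, the same asymptotic argument gives convergence to a $\pi$-scalar multiple of $(b_2,c_2)$; the prefactor $\pi_{\nu_2}\pi^{\ang{\tilde\nu_2,\lambda'}}$ in the $J$-polarization is tailored precisely so that the combined limit matches the value of $(x,y)$ predicted by Lemma \ref{lem:dotbilform on Uplus}.

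Finally, I would compute $(b_1^-b_2^+1_\zeta,c_1^-c_2^+1_\zeta)$ directly using Theorem \ref{thm:dotbilinearform}: pushing $\tau_1(b_1^-)$ across via property (2), applying property (3) together with Lemma \ref{lem:dotbilform on Uplus}, and matching the result to the limit above. The principal obstacle is the careful bookkeeping of $\pi$- and $v$-powers in the third paragraph: the factor $\pi_{\nu_2}\pi^{\ang{\tilde\nu_2,\lambda'}}$ from the $J$-polarization, together with the twist scalars introduced by relating $(b_2^+\xi_{-\lambda'},c_2^+\xi_{-\lambda'})_{\lambda'}$ to a form on $\ff$, must align exactly with the scalar produced by the algebraic calculation via $\tau_1$ and Lemma \ref{lem:dotbilform on Uplus}. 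A subsidiary technical point is the uniform control of the error $R_x(\lambda,\lambda')$, handled by induction on $\height|b_1|$ to verify that every term there carries at least one positive factor $v^{d_i\ang{i,\lambda}}$.
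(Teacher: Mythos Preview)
Your approach diverges substantially from the paper's, and it contains a genuine gap: the claim that $R_x\to 0$ in the $v$-adic topology is false.

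The error is in the sentence ``each term of $R_x$ has $v$-valuation bounded below by a quantity tending to $+\infty$.'' It is true that the $\tJ_i\tK_i\otimes F_i$ branch contributes a prefactor $(\pi_i v_i)^{\ang{i,\mu}}$ from the first tensor slot. But you are ignoring the $v$-valuation of the \emph{second} slot. There $F_i$ acts on $b_2^+\xi_{-\lambda'}$, which in $V(\lambda')$ means applying $\omega(F_i)=\pi_i\tJ_iE_i$ to $b_2^-\eta_{\lambda'}$; commuting $E_i$ past $b_2^-$ produces, via the commutator formula, terms carrying $\tK_{-i}$ and hence factors of order $v_i^{-\ang{i,\lambda'}}$. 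These negative powers exactly offset the positive powers coming from $\tK_i$ on the first slot. Concretely, take $I=\{i\}$, $\zeta=0$, $\lambda=\lambda'$, and $x=F_iE_i1_0$. Then
\[
R_x=(\pi_iv_i)^{\ang{i,\lambda}}\,\eta_\lambda\otimes F_i\!\cdot\!(E_i\xi_{-\lambda})
=(\pi_iv_i)^{\ang{i,\lambda}}\,\pi_i^{1+\ang{i,\lambda}}[\ang{i,\lambda}]_{v_i,\pi_i}\,\eta_\lambda\otimes\xi_{-\lambda},
\]
and since $[\,n\,]_{v_i,\pi_i}$ has leading term $v_i^{-n+1}/(1-\pi_iv_i^2)$, the product converges to the \emph{nonzero} element $\dfrac{\pi_i v_i}{1-\pi_iv_i^2}\,\eta_\lambda\otimes\xi_{-\lambda}$. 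Thus $R_x\not\to 0$, and the cross-term $(R_x,R_y)$ contributes nontrivially to the limit. Correspondingly, your ``main term'' alone cannot equal $(x,y)$: the value $(F_iE_i1_0,F_iE_i1_0)$ computed in the paper's example is not a pure product $(\theta_i,\theta_i)^2$ times a $\pi$-scalar.

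The paper sidesteps this completely. Rather than expanding general basis elements, it first treats the special case $x=x_1^-1_\zeta$, $y=y_1^-1_\zeta$, where the second term of $\Delta(F_i)$ genuinely annihilates $\xi_{-\lambda'}$, so $x(\eta_\lambda\otimes\xi_{-\lambda'})=x_1^-\eta_\lambda\otimes\xi_{-\lambda'}$ \emph{exactly} (no error, no $\pi$-bookkeeping), and convergence is the known result for $V(\lambda)$. The general case is then reduced to this one in two steps by the adjunction property $(um,m')_{\lambda,\lambda'}=(m,\tau_1(u)m')_{\lambda,\lambda'}$ together with $(ux,y)=(x,\tau_1(u)y)$ on $\dotU$: write $x=u1_\zeta$, move $u$ across, and iterate. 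The whole ``principal obstacle'' you identify simply never arises.
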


\begin{proof}
Assume first that $x=x_1^-1_\zeta$ and $y=y_1^-1_\zeta$ for $x_1,y_1\in \ff$
Then \[x(\eta_{\lambda}\otimes \zeta_{-\lambda'})=x_1^-\eta_{\lambda}\otimes \zeta_{-\lambda'},\]
\[y(\eta_{\lambda}\otimes \zeta_{-\lambda'})=y_1^-\eta_{\lambda}\otimes \zeta_{-\lambda'}.\]
Therefore 
$(x(\eta_{\lambda}\otimes \zeta_{-\lambda'}),y(\eta_{\lambda}\otimes \zeta_{-\lambda'}))_{\lambda,\lambda'}=(x\eta_\lambda,y\eta_\lambda)_\lambda$,
and the right-hand side converges to $(x,y)=(x^-1_\zeta,y^-1_\zeta)$ by \cite[Proposition 6.10]{CHW2}.

Now assume that $x=1_\zeta$ and $y$ is arbitrary. Then we may assume that $y=\tau_1(x_1^-)y_1^- 1_\zeta$ for some $x_1,x_2\in \ff$.
But then by the polarization property, 
\[(1_\zeta (\eta_{\lambda}\otimes \zeta_{-\lambda'}), y(\eta_{\lambda}\otimes \zeta_{-\lambda'}))_{\lambda,\lambda'}
=(x_1^-(\eta_{\lambda}\otimes \zeta_{-\lambda'}), y_1^-(\eta_{\lambda}\otimes \zeta_{-\lambda'})).\]
Using the previous case, this converges to 
\[(x_1^-1_\zeta,y_1^-1_\zeta)=(1_\zeta,\tau_1(x_1^-)y_1^-1_\zeta)=(x,y).\]

Finally, let us assume $x$ and $y$ are both arbitrary. We may assume that $x=u1_\zeta$ for some $u\in \UU$.
Then 
\[(x (\eta_{\lambda}\otimes \zeta_{-\lambda'}), y(\eta_{\lambda}\otimes \zeta_{-\lambda'}))_{\lambda,\lambda'}
=(u(\eta_{\lambda}\otimes \zeta_{-\lambda'}), y(\eta_{\lambda}\otimes \zeta_{-\lambda'}))_{\lambda,\lambda'}
=(1_\zeta(\eta_{\lambda}\otimes \zeta_{-\lambda'}), \tau_1(u)y(\eta_{\lambda}\otimes \zeta_{-\lambda'})).\]
Again, by the previous case, this converges to $(1_\zeta,\tau_1(u)y)=(x,y)$.
\end{proof}

Given $\Qvp$-modules $M,M'$ and a pairing $(\cdot,\cdot):M\times M'\rightarrow \Qvp$,
we say a $\pi$-basis $B$ of $M$ is {\em $\pi$-almost-orthonormal} to a $\pi$-basis $B'$ of $M'$  if
they satisfy the following conditions:
\[(b,b')\in \Zp[[v]]\cap \Qvp;\]
\[(b,b')\in v\Zp[[v]]\text{ if }b\neq b'\text{ and }b\neq \pi b';\]
\[(b,b)\in \pi^{\epsilon}+ v\Zp[[v]]\text{ for some } \epsilon\in\set{0,1}.\]
In the case $M=M'$, we will just say $\pi$-almost-orthonormal.
\begin{theorem}
The basis $\dot\cB$ is $\pi$-almost-orthonormal.
\end{theorem}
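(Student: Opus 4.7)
The plan is to establish $\pi$-almost-orthonormality for the canonical bases on the modules $N(\lambda,\lambda')\in\mathcal{F}$ with respect to the $J$-polarization, then transfer to $\dotU$ using the limit relation from the previous proposition. For fixed $(b,b'),(b_1,b_1')\in\cB\times_\pi\cB$, the pairing vanishes unless $|b|=|b_1|$ and $|b'|=|b_1'|$; in this case set $\nu=|b'|$. Choose $\lambda,\lambda'\in X^+$ sufficiently large with $\zeta=\lambda-\lambda'=|b|-|b'|$ so that $b,b_1\in\cB(\lambda)$ and $b',b_1'\in\cB(\lambda')$.

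To verify $\pi$-almost-orthonormality of $\{(b\diamondsuit b')_{\lambda,\lambda'}\}$ on $N(\lambda,\lambda')$, I first compute the pairing on the standard basis elements. By the defining formula,
\begin{equation*}
(b^-\eta_\lambda\otimes b'^+\xi_{-\lambda'},\, b_1^-\eta_\lambda\otimes b_1'^+\xi_{-\lambda'})_{\lambda,\lambda'} = \pi_{\nu}\pi^{\ang{\tilde\nu,\lambda'}}(b^-\eta_\lambda,b_1^-\eta_\lambda)_\lambda(b'^+\xi_{-\lambda'},b_1'^+\xi_{-\lambda'})_{\lambda'}.
\end{equation*}
By Lemma \ref{lem:extra CB props}(2), the first factor is $\sgn(b)\delta_{b,b_1}$ modulo $v\Zpv$. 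For the second factor, I would transport the polarization on $V(\lambda')$ to ${}^\omega V(\lambda')$ via $\omega$, using Lemma \ref{lem:omega twist cyclicity} to absorb the resulting $\pi$-twist, obtaining an analogous statement with $\sgn(b')\delta_{b',b_1'}$. Thus the standard basis is $\pi$-almost-orthonormal up to a specific power of $\pi$. Since Theorem \ref{thm:CBforN} expresses $(b\diamondsuit b')_{\lambda,\lambda'}$ as $b^-\eta_\lambda\otimes b'^+\xi_{-\lambda'}$ plus an element of $v\cL(\lambda,\lambda')$ (with integrality by Corollary \ref{cor:ThetaIntegrality}), the unitriangular expansion carries this over to the canonical basis: the inner product on $N(\lambda,\lambda')$ has the form $\pi^{\epsilon(\lambda,\lambda')}\delta_{(b,b'),(b_1,b_1')}$ modulo $v\Zpv$.

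To conclude, by Theorem \ref{thm:cbdotU}(1) the elements $b\diamondsuit_\zeta b'$ and $b_1\diamondsuit_\zeta b_1'$ act on $\eta_\lambda\otimes\xi_{-\lambda'}$ to produce the corresponding canonical basis elements of $N(\lambda,\lambda')$ for such large $\lambda,\lambda'$. The preceding proposition then gives convergence of the finite-stage inner product to $(b\diamondsuit_\zeta b',b_1\diamondsuit_\zeta b_1')$ in $\Q^\pi((v))$ as $\lambda,\lambda'\to\infty$. Since each finite-stage value lies in $\Zpv$ with the $\pi$-almost-orthonormal form above, coefficient-wise convergence combined with membership of the limit in $\Qvp$ shows the limit lies in $\Zp[[v]]\cap\Qvp$, with off-diagonal entries in $v\Zp[[v]]$ and diagonal entries of the form $\pi^\epsilon+v\Zp[[v]]$ for a single $\epsilon\in\{0,1\}$. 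The main obstacle is the careful bookkeeping of these $\pi$-factors and verifying that the parity $\epsilon(\lambda,\lambda')$ stabilizes, which is forced by the convergence of constant terms in $\Q^\pi((v))$.
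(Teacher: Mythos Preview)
Your proposal is correct and follows essentially the same route as the paper: use the limit proposition to reduce to $\pi$-almost-orthonormality on $N(\lambda,\lambda')$, verify this on the standard tensor basis via Lemma~\ref{lem:extra CB props}(2), and transfer to the canonical basis by the unitriangular expansion of Theorem~\ref{thm:CBforN}. The only cosmetic differences are that the paper phrases the module step as pairing $(b\diamondsuit b')_{\lambda,\lambda'}$ against $(b_1\diamondsuit_4 b_1')_{\lambda,\lambda'}$ (which, by Lemma~\ref{lem:N34iso} and Corollary~\ref{cor:dotCB Delta4 action}, differs from your $(b_1\diamondsuit b_1')_{\lambda,\lambda'}$ only by a power of $\pi$), and that your use of Lemma~\ref{lem:omega twist cyclicity} is unnecessary since $\omega(b'^+)=b'^-$ on the nose, so the second factor is literally $(b'^-\eta_{\lambda'},b_1'^-\eta_{\lambda'})_{\lambda'}$.
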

\begin{proof}
It is trivial that $(b\diamondsuit_\zeta b',b_1\diamondsuit_{\zeta'} b_1')=0$ if $\zeta\neq \zeta'$.
In particular, we may assume the root datum is simply connected.

Recall by construction and by Corollary \ref{cor:dotCB Delta4 action} that for $s=3,4$ and
for all $\lambda,\lambda'\in X^+$ such that if $\lambda-\lambda'=\zeta$,
\[\Delta_{s}(b\diamondsuit_\zeta b')\eta_{\lambda}\otimes\xi_{-\lambda'}=\pi^{\epsilon}(b\diamondsuit_{s}b')_{\lambda,\lambda'},\]
where $\epsilon\in\set{0,1}$ (and by convention, we set $(b\diamondsuit_{s}b')_{\lambda,\lambda'}=0$
if $b\notin \cB(\lambda)$ or $b'\notin \cB(\lambda')$).

By the previous proposition, it suffices to show that 
for all $\lambda,\lambda'\in X^+$ such that if $\lambda-\lambda'=\zeta$ such that $b\in B(\lambda)$ and $b'\in B(\lambda')$,
$(b\diamondsuit b')_{\lambda,\lambda'}$ is $\pi$-almost-orthonormal to $(b_1\diamondsuit_4 b_1')_{\lambda,\lambda'}$
for all $b_1,b_1'$. But this follows from the definition of the bilinear form and the almost-$\pi$-orthonomality
of the canonical basis of $V(\lambda)$; cf. \cite[Proposition 6.1]{CHW2}.
\end{proof}

\begin{remark}
It should be noted that, in contrast to the non-super case, 
a characterization of $\dot \cB$ via $\pi$-almost-orthonormality is not possible,
since in particular it is not possible for $\cB$; cf. \cite[Example 6.3]{CHW2}.
\end{remark}

\if 0
\section{Based Modules}

For the remainder of the paper, we restrict our attention to the study of anisotropic quantum covering groups
of finite type. In this case, the super Cartan datum must be associated to $\mathfrak{osp}(1|2n)$; that is, 
it is the Cartan datum associated to $\mathfrak{so}(2n+1)$
such that the short root is odd. Let $W$ be the associated Weyl group on the simple reflections $s_i$ for $i\in I$. 
Then $W$ acts on $Y$ and $X$, as described
in \cite[Chapter 2]{L93}. Let $w_0\in W$ be the longest element.

\subsection{Weyl group action}

The action of the Weyl group on $X$ can be realized action on weight modules.
We shall simply state the result we need; the proof is entirely analogous to the
proof in \cite[\S 5.2]{L93}, and will appear in \cite{CH}.

\begin{proposition}\label{prop:weyl action}
Let $M$ be a weight module and $\lambda\in X$.
There are vector space isomorphisms $T_i: M_{\lambda}\rightarrow M_{s_i(\lambda)}$
for each $i\in I$.
\end{proposition}

\begin{corollary}
The following results hold.
\begin{enumerate}
\item For any $\lambda\in X^+$, $V(\lambda)$ is finite-dimensional.
\item Let $M$ be a finite-dimensional weight module. Then $M$ is integrable and $M$ is a highest-weight module, hence
it is a sum of simple $\UU$-modules $V(\lambda)$.
\end{enumerate}
\end{corollary}

Fix $\lambda\in X^+$.
This implies in particular that $V(\lambda)_{-w_0(\lambda)}$
is the non-zero weight space of lowest weight, and moreover that
this weight space is 2-dimensional over $\Q(v)$.

\begin{proposition}\label{prop: cartan twist identification}
Let $\lambda'=-w_0(\lambda)\in X^+$ and let $b_0\in \cB(\lambda')_{\lambda'+\lambda}$.
There is an isomorphism $\omega_{b_0}:V(\lambda)\rightarrow {}^{\omega} V(\lambda')$
such that $\omega(\eta_\lambda)=b_0\eta_{\lambda'}$ and such that 
for any $b\in B(\lambda)$, $\omega(b\eta_\lambda)=b'\eta_{\lambda'}$
for some $b'\in B(\lambda')$.
\end{proposition}

\begin{proof}

The existence of an arbitrary isomorphism follows from the observation
that ${}^\omega V(\lambda')$ is a covering simple module of highest weight $\lambda$; moreover,
we can assume that $\omega_{b_0}(\eta_{\lambda})=b_0 \eta_{\lambda'}$ for some
$b_0\in B(\lambda')_{\lambda'+\lambda}$. Let us now use the notation $\omega=\omega_{b_0}$.

Fix $b\in B(\lambda)_\nu$. Then the proof in \cite[Prop 21.1.2]{L93}
proves that $\omega(\epsilon_+ b\eta_\lambda)=\epsilon_+b'\eta_{\lambda'}$ for some
$b'\in B(\lambda')$. To prove the proposition, we may note that $\pi\epsilon_+=\epsilon_+$ 
and hence it is enough to show that
$\omega(\epsilon_- b\eta_\lambda)=\epsilon_-b'\eta_{\lambda'}$ or
$\omega(\epsilon_- b\eta_\lambda)=\epsilon_-(\pi b')\eta_{\lambda'}$. 

Let $\nu_0\in \N[I]$ such that $\nu_0'=\lambda'+\lambda$.
First observe that $\omega$ trivially extends to an isomorphism $V(\lambda)[\bt]\rightarrow {}^\omega V(\lambda')[\bt]$.
Since $\omega(\epsilon_-b\eta_\lambda)=\omega(\epsilon_- b) b_0\eta_{\lambda'}$,
we can use Lemma \ref{lem:tw and omega CB} and compute
\begin{align*}
\Tw_{\lambda'}(\omega(\epsilon_-b\eta_\lambda))&
=\Tw(\omega(\epsilon_- b)) \Tw_{\lambda'}(b_0\eta_{\lambda'})\\
&= \parens{\bt^{\ell(b)+\bullet(\nu)}\omega(\epsilon_+b^+)\Upsilon_\nu \tT_{-\nu}} 
\parens{\bt^{\ell(b_0)-\phi(\nu_0,\lambda')}b_0 \eta_{\lambda'}} \\
&=\bt^{\ell(b)+\ell(b_0)+\bullet(\nu)+\ang{\tilde{\nu},\lambda}-\phi(\nu,\lambda)-\phi(\nu_0,\lambda')} \epsilon_+b' \eta_{\lambda'}.
\end{align*}
Now we apply $\Tw_\lambda^{-1}$ to see that
\[
\omega(\epsilon_-b\eta_\lambda))=
\bt^{\ell(b)+\ell(b_0)-\ell(b')+\bullet(\nu)+\ang{\tilde{\nu},\lambda}-\phi(\nu,\lambda)-\phi(\nu_0,\lambda')+\phi(\nu_0-\nu,\lambda')} 
\epsilon_-b' \eta_{\lambda'}.
\]
Since $\omega(V(\lambda))\subset {}^{\omega} V(\lambda')$, apparently 
\[\ell(b)+\ell(b_0)-\ell(b')+\bullet(\nu)+\ang{\tilde{\nu},\lambda}-\phi(\nu,\lambda)-\phi(\nu_0,\lambda')+\phi(\nu_0-\nu,\lambda')\in 2\Z.\]
Therefore either
$\omega(\epsilon_-b\eta_\lambda))=\epsilon_-b' \eta_{\lambda'}$ or
$\omega(\epsilon_-b\eta_\lambda))=-\epsilon_-b' \eta_{\lambda'}=\epsilon_-(\pi b')\eta_{\lambda'}$.
\end{proof}

Note that by convention, $p(\eta_\lambda)=p(\xi_{-\lambda})=0$, so $p(\omega_{b_0})=p(b_0)$. 

\begin{corollary}
Assume that $(I,\cdot)$ is of finite type. Let $S$ be a $\Qvp$-basis 
in $\cB$, and
write \[\Theta=\sum_{\nu} \sum_{b,b'\in S_\nu} p_{b,b'} b^+\otimes b'^-\]
with $p_{b,b'}\in \Qvp$. Then for any $\nu^0$ and any $b^0,b'^0\in S_{\nu^0}$, we have $p_{b^0,b'^0}\in \Zpvvi$.
\end{corollary}

\begin{proof}
Let $\lambda,\lambda'\in X^+$ such that $b^0\in \cB(-w_0(\lambda))$, $b'^0\in \cB(-w_0(\lambda') )$,
and set $\eta=\eta_{\lambda}$ and $\xi=\xi_{\lambda'}$.
Let $b_0\in \cB(-w_0(\lambda))_{\lambda+w_0(\lambda)}$ and $b_0'\in \cB(-w_0(\lambda'))_{\lambda'+w_0(\lambda')}$.
By Corollary \ref{cor:ThetaIntegrality}, $\Theta$ defines a map ${}_{\Z}N(\lambda,\lambda')\rightarrow {}_{\Z}N(\lambda,\lambda')$.
Now we compute
\[\Theta(b_0^-\eta_{\lambda}\otimes b_0'^+\xi_{\lambda})=\sum_{\nu} \sum_{b,b'\in \cB_\nu} \pi^{p(b_0)p(b')}p_{b,b'} b^+b_0^-\eta\otimes b'^-b_0'^+ \xi.\]
Applying Proposition \ref{prop: cartan twist identification}, $b^+b_0^-\eta=b_1^-\eta$ or $0$ and $b'^-b_0'^+\xi=b_1'^+\xi$ or $0$ for some
$b_1\in \cB(\lambda)$ and $b_1'\in \cB(\lambda')$; moreover, the nonzero elements form $\Qvp$-bases of their respective modules. 
In particular, the elements $b_1^-\eta\otimes b_1'^+ \xi$ form a $\Zpvvi$-basis of ${}_\Z N(\lambda,\lambda')$,
whence $p_{b,b'}\in \Zpvvi$ for $b$ and $b'$ such that $b\in \cB(-w_0(\lambda))$ and $b'\in \cB(-w_0(\lambda'))$.
Therefore, by our assumption on $\lambda,\lambda'$, the result follows. 
\end{proof}

\subsection{Isotypical Components}

Let $M$ be a weight $\UU$-module which is finite-dimensional $\Qvp$-module.
Then $M$ is a direct sum of simple modules $V(\lambda)$ with $\lambda\in X^+$.
Let $M[\lambda]$ be the sum of simple subobjects of $M$ isomorphic to $V(\lambda)$.
Then $M=\bigoplus_\lambda M[\lambda]$. We also define 
\[M[\geq \lambda]=\bigoplus_{\lambda'\geq \lambda} M[\lambda'],\]
\[M[> \lambda]=\bigoplus_{\lambda'> \lambda} M[\lambda'].\]
Note that $M[\geq \lambda]=M[\lambda]\oplus M[> \lambda]$.

\begin{definition}\label{def:based}
A {\em based module} is a finite-dimensional $\Qvp$-module $M$ with a given $\pi$-basis $B$
such that
\begin{enumerate}
\item[(a)] $B\cap M_\zeta$ is a $\pi$-basis for $M_\zeta$ for any $\zeta\in X$;
\item[(b)] the $\Zpvvi$-submodule ${}_\Z M$ generated by $B$ is stable under $\dotUint$;
\item[(c)] the $\Q^\pi$-linear involution $\bar{\phantom{x}}:M\rightarrow M$ defined by $\bar{fb}=\bar{f}b$ for all
$f\in \Qvp$ and all $b\in B$ satisfies $\bar{um}=\bar{u}\bar{m}$ for all $u\in \UU$ and $m\in M$;
\item[(d)] the $\mathcal A$-submodule $L(M)$ generated by $B$ is a crystal lattice of $M$, and $B+v L(M)$ is a crystal
basis of $M$.
\end{enumerate}
We call $\bar{\phantom{x}}:M\rightarrow M$ the {\em associated involution} of $(M,B)$.
\end{definition}
We note that for any $\lambda\in X^+$, $V(\lambda)$ with its canonical basis forms a based module.
The based modules form the objects of a category $\tilde{\mathcal C}$, and we define a morphism from
a based module $(M,B)$ to a based module $(M',B')$ to be a $\UU$-module homomorphism $f:M\rightarrow M'$ 
such that $f(B)\subset B'\cup\set{0}$ and $B\cap \ker f$ is a basis of $\ker f$.
Note that if $(M,B)$ and $(M',B')$ are based modules, then $(M\oplus M', B\sqcup B')$ is a based module.
Furthermore, if $N$ is a submodule of a based module $(M,B)$ such that $N\cap B$ spans $N$, then $(N,N\cap B)$
is a based module and $(M/N,B\setminus N\cap B)$ is also a based module.

\subsubsection{}
Let $(M,B)$ be a based module. Suppose that $m\in M$ satisfies $\bar m= m$, $m\in |_{\Z} M$, and $m\in B+vL(M)$ (resp. $m\in vL(M)$).
Then by a standard argument, $m\in B$ (resp. $b=0$).

\subsubsection{}
Let $(M,B)$ be a based module with $M\neq 0$. Let $\lambda_1\in X^+$ such that $M_{\lambda_1}\neq 0$
and $\lambda_1$ is maximal such that this holds. Set $B_1=B\cap M_{\lambda_1}$. Then this is a non-empty set,
and there is a subset $S_1$ of $B_1$ which is an honest basis of $M_{\lambda_1}$.
Let $M'=\sum_{b\in S_1} V(\lambda_1)^b$; here, $V(\lambda_1)^b$ is a copy of $V(\lambda)$ corresponding to
$b\in S_1$, and we will denote the canonical generator $\eta_{\lambda_1}$ by $\eta_b$.
Then $M'$ is a based module with the given $\pi$-basis $B'$ which is the union of the canonical bases of the summands.
We note that for any $b\in S_1$, $E_ib=0$ by maximality, and hence we have a homomorphism $\phi:M'\rightarrow M$
such that $\phi(\eta_b)=b$.

\begin{proposition}
In the above setup, $B\cap M[\lambda_1]$ is a basis of $M[\lambda_1]$ and $\phi$ defines an isomorphism $M'\cong M[\lambda_1]$
carrying $B'$ onto $B\cap M[\lambda_1]$. Thus $\phi$ is an isomorphism of based modules.
\end{proposition}

\begin{proof}
First note that the associated involution on each $V(\lambda)^b$ induce an associated involution on $M'$;
moreover, since any $m\in V(\lambda)^b$ can be written as $m=u\eta_b$ for $u\in \dotU$, 
we see that $\phi(\bar m)=\phi(\bar{u}\eta_b)=\bar{u}b=\bar{\phi(m)}$.
In particular, for any $b'\in B'$, $\bar{\phi(b')}=\phi(b')$.

Let $\tf_i,\te_i$ be the Kashiwara operators (cf. \cite{CHW2}).
Since $(L(M'), B'+vL(M'))$ and $(L(M),B+vL(M))$ are crystal bases and $\phi$ is a $\UU$-module homomorphism,
$\phi(L(M'))\subset L(M)$.
We can write any $b'\in B'$
as $b'=\tf_{i_1}\tf_{i_2}\ldots\tf_{i_n} \eta_b+v\ell'$ for some $b\in S_1$ and $\ell'\in vL(M')$.
Since the Kashiwara operators commute with $\UU$-module homomorphisms,
$\phi(b')=\tf_{i_1}\tf_{i_2}\ldots\tf_{i_n} b+v\ell$ where $\ell\in vL(M)$; in particular, either $\phi(b')\in B+vL(M)$ or
$\phi(b')\in vL(M)$.
On the other hand, $b'\in \dotUint \eta_b$, hence $\phi(b')\in \dotUint b\subset {}_\Z M$ by Definition \ref{def:based}(b).

Thus we see that $\phi(b')\in (B\sqcup\set 0) + vL(M)$, $\phi(b')\in {}_\Z M$ and $\bar{\phi(b')}=\phi(b')$.
Then $m\in B\sqcup\set 0$. However, since $\phi$ is injective when restricted to $V(\lambda)^b$,
we must have $m\in B$.

\end{proof}
\fi

\section{Twistors}

In this section, we turn to some related constructions on $\dotU$ which
first appeared in \cite{CFLW}. Henceforth, we will assume that our
datum is also $X$-regular (as defined in \S \ref{subsec:datum}).
Let $\dot\Tw$ be the twistor map on $\dotU$ in \cite{CFLW}.
Then the complexification of $\dotU|_{\pi=-1}$ has a basis
obtained by applying $\dot\Tw$ to Lusztig's canonical basis
of $\dotU|_{\pi=1}$. We shall show that the resulting basis
equals the specialization of $\cB$ up to a factor of
an integral power of $\sqrt{-1}$.
Along the way, we extend the notion
of twistor maps to representations of $\UU$. 

\subsection{The twistor maps}
In this section, we recall (and restate) some results from \cite{CFLW}.
Let $\bt$ be a square root of $-1$. For $n,m\in \Z$, we shall use the shorthand $n\equiv_4 m$ 
if $n$ is congruent to $m$ modulo 4. 
We will freely use the fact that if $n,m\in \Z$ such that $n\equiv_4 m$,
then $\bt^n=\bt^m$.

\begin{definition} \label{definition:enhancer}
An {\em enhancer} $\phi$ is an function $\phi:\Z[I]\times X\rightarrow \Z$ satisfying
\begin{enumerate}
\item $\phi(\nu,\lambda+\mu')\equiv_4\phi(\nu,\mu')+\phi(\nu,\lambda)$ and $\phi(\nu+\mu,\lambda)\equiv_4\phi(\nu,\lambda)+\phi(\mu,\lambda)$ for $\nu,\mu\in \Z[I]$ and
$\lambda\in X$.
\item $\phi(i,j')\in 2\Z$ for $i\neq j\in I$.
\item $\phi(i,j')-\phi(j,i')\equiv_4 i\cdot j+2p(i)p(j)$ and 
$\phi(i,i')\equiv_4 d_i$ for $i\neq j\in I$.
\end{enumerate} 
\end{definition}
We note that $\phi$ need not be $\Z$-linear in the second coordinate; 
specifically, it need not be the case that $\phi(\nu,\lambda+\lambda')\equiv_4 \phi(\nu,\lambda)+\phi(\nu,\lambda')$ 
for $\lambda,\lambda'\in X$.

\begin{example}
Let $I=I_\one=\set{i}$, so $X=\Z$ and $i'=2\in \Z$. Then an enhancer $\phi$ is determined entirely by choosing a value of $\phi(1,1)$;
indeed, if $n,k\in \Z$ and $r\in\set{0,1}$ then \[\phi(n,2k+r)\equiv_4 n\phi(1,2k+r)=n\phi(1,r)+nk\phi(1,2),\]
and we have $\phi(1,2)\equiv_4 1$ and $\phi(1,0)\equiv_4 0$ from the definition of an enhancer. In this case, there is no way
for $\phi$ to be $\Z$-bilinear since $1\equiv_4 \phi(1,2)\not\equiv_4 2\phi(1,1)$ for any choice of $\phi(1,1)$.
\end{example}

As shown in {\em loc. cit.}, such an enhancer exists when the Cartan datum is $X$-regular.
We shall henceforth assume an enhancer exists, and fix a choice $\phi$.
We can extend scalars and enlarge the Cartan subalgebra of the quantum covering
group as follows.

\begin{definition} {\bf \cite[\S 4.4]{CFLW}}
 \label{definition:ecqg}
The {\em $\phi$-enhanced quantum covering group}
$\hatU$ associated to a super root datum $(Y, X, I, \cdot)$ and enhancer $\phi$ is the $\Q^\pi(\bt,v)$-algebra with generators
$E_i, F_i$, $K_\mu$, $J_\mu$, $T_\mu$, and $\Upsilon_\mu$, for  $i\in I$ and $\mu\in Y$, subject to the 
relations \eqref{eq:JKrels}-\eqref{eq:Fserrerel}, as well as the relations
\begin{equation}\label{eq:TUpsrels}
T_\mu T_\nu=T_{\mu+\nu},\quad \Upsilon_\mu \Upsilon_\nu=\Upsilon_{\mu+\nu},\quad T_0=\Upsilon_0=T_\nu^4=\Upsilon_\nu^4=1,\quad
T_\mu \Upsilon_\nu=\Upsilon_\nu T_\mu,
\end{equation} 
\begin{equation}\label{eq:Tweightrels}
T_\mu E_i=\bt^{\ang{\mu,i'}} E_i T_\mu,\quad T_\mu F_i=\bt^{-\ang{\mu,i'}} F_i T_\mu,
\end{equation} 
\begin{equation}\label{eq:Upsweightrels}
\Upsilon_\mu E_i=\bt^{\phi(\mu,i')} E_i \Upsilon_\mu,\quad \Upsilon_\mu F_i=\bt^{-\phi(\mu,i')} F_i \Upsilon_\mu,
\end{equation} 
\end{definition}

Note that $\dotU[\bt]$ is a $\hatU$-bimodule under the $\U$-module
action and under $T_\mu\Upsilon_\nu 1_\lambda=1_\lambda T_\mu\Upsilon_\nu= \bt^{\ang{\mu,\lambda}+\phi(\nu,\lambda)} 1_\lambda$.
The enhanced quantum covering group has a useful $\Q(\bt)$-linear automorphism called a {\em twistor}.
There are several ways to define such a twistor; we will need the following.

\begin{proposition}{\bf \cite[Theorems 4.3, 4.12]{CFLW}}
\begin{enumerate}
\item There is an automorphism $\Tw:\hatU\rightarrow \hatU$ defined by
\[\Tw(E_i)=\bt_i^2 E_i\tT_i \Upsilon_i,\quad \Tw(F_i)=F_i\Upsilon_{-i},\quad \Tw(K_\mu)=T_{-\mu}K_\mu,\quad \Tw(J_\mu)=T_{2\mu} J_\mu,\]
\[\Tw(T_\mu)=T_\mu,\quad \Tw(\Upsilon_\mu)=\Upsilon_\mu,\quad \Tw(v)=\bt^{-1} v,\quad \Tw(\pi)=-\pi,\]
where if $\mu=\sum_{i\in I} \mu_i i$, $\tT_\mu=\prod_{i\in I} T_{\mu_i d_i i}$.
\item There is an automorphism ${\Tw}:\dotU[\bt]\rightarrow \dotU[\bt]$ such that $\Tw(u1_\lambda u')=\Tw(u)1_\lambda\Tw(u')$ 
for $u,u'\in \hatU$.
\end{enumerate}
\end{proposition}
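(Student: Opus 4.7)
The plan is to establish (1) by direct verification that the prescribed images of generators of $\hatU$ satisfy the defining relations, and to deduce (2) by extending $\Tw$ to $\dotU[\bt]$ through the bimodule structure.

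For (1), the verification is largely parallel to that in \cite{CFLW}, with the additional bookkeeping needed to accommodate the enhanced Cartan generators $T_\mu, \Upsilon_\mu$. The scalar identities $\Tw(v) = \bt^{-1} v$ and $\Tw(\pi) = -\pi = \bt^2 \pi$ make the relations \eqref{eq:JKrels}--\eqref{eq:Kweightrels} and \eqref{eq:TUpsrels}--\eqref{eq:Upsweightrels} routine checks; in particular, the $T_\mu, \Upsilon_\mu$ weight relations \eqref{eq:Tweightrels}--\eqref{eq:Upsweightrels} for $\Tw(E_i), \Tw(F_i)$ follow because $\Tw$ fixes $T_\mu$ and $\Upsilon_\mu$, and the added $\tT_i, \Upsilon_i$ and $\Upsilon_{-i}$ factors commute with $T_\mu, \Upsilon_\mu$. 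The commutator relation \eqref{eq:commutatorrelation} requires a careful tally of the $\bt$-factors arising when one commutes $\Upsilon_i$ past $F_i$ and $\tT_i$ past $E_i$; combined with $\Tw(J_{d_ii}K_{d_ii}) = T_{d_ii} J_{d_ii} K_{d_ii}$, $\Tw(K_{-d_ii}) = T_{d_ii} K_{-d_ii}$, and $\Tw(\pi_i v_i - v_i^{-1}) = \bt^{-1}(-\pi_i v_i) - \bt v_i^{-1}$, the prefactor $\bt_i^2$ in $\Tw(E_i)$ is precisely what is needed to balance the two sides.

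The heart of (1) is the Serre relations \eqref{eq:Eserrerel} and \eqref{eq:Fserrerel}. Applying $\Tw$ to the $k$-th summand introduces $\bt$-powers depending on $\phi$ and $\ang{\cdot,\cdot}$, and on $k$ via the exponents of $E_i, F_i$; the enhancer conditions (2) and (3) of Definition \ref{definition:enhancer} are engineered so that this $k$-dependence collapses modulo $4$ to a common scalar across all $k$. The resulting overall prefactor then multiplies the original vanishing Serre sum. This is the main obstacle, and it proceeds exactly as in the proof of \cite[Theorem 4.3]{CFLW}, the only novelty being the presence of the central-like factors $\tT_i, \Upsilon_i, \Upsilon_{-i}$ which contribute precisely the $\bt^{\phi(\cdot,\cdot)}$ weights controlled by the enhancer.

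For (2), the plan is to set $\Tw(1_\lambda) = 1_\lambda$ for each $\lambda \in X$ and extend via $\Tw(x 1_\lambda y) = \Tw(x) 1_\lambda \Tw(y)$ for $x, y \in \hatU$. Well-definedness reduces to checking that the defining relations of $\dotU[\bt]$ from Definition \ref{def:mqg} are respected. The grading relation $x 1_\lambda = 1_{\lambda+|x|} x$ is preserved because $\Tw$ is a $\Z[I]$-graded automorphism of $\hatU$: the images of $E_i, F_i$ retain weights $i, -i$, and $T_\mu, \Upsilon_\mu$ are weight zero. The scalar identities hold since
\[
\Tw(K_\mu 1_\lambda) = T_{-\mu} K_\mu 1_\lambda = \bt^{-\ang{\mu,\lambda}} v^{\ang{\mu,\lambda}} 1_\lambda = \Tw(v^{\ang{\mu,\lambda}}) \cdot 1_\lambda,
\]
and similarly
\[
\Tw(J_\mu 1_\lambda) = T_{2\mu} J_\mu 1_\lambda = \bt^{2\ang{\mu,\lambda}} \pi^{\ang{\mu,\lambda}} 1_\lambda = (-\pi)^{\ang{\mu,\lambda}} 1_\lambda = \Tw(\pi^{\ang{\mu,\lambda}}) \cdot 1_\lambda.
\]
Multiplicativity then follows from that of $\Tw$ on $\hatU$, and invertibility is inherited from (1).
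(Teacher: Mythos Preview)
Your proposal is correct and follows essentially the same approach as the paper: verify the defining relations directly for (1), referring to \cite{CFLW} for the Serre relations, and for (2) extend $\Tw$ to $\dotU[\bt]$ via $\Tw(1_\lambda)=1_\lambda$ and the $\hatU$-bimodule structure. The paper's own proof is in fact more terse than yours---it only writes out the commutator relation \eqref{eq:commutatorrelation} explicitly and mentions an alternative route to the Serre relations via \eqref{eq:twist vs ff-embeddings}---so your outline, including the explicit check of the scalar relations $K_\mu 1_\lambda$ and $J_\mu 1_\lambda$ for part (2), is a slight expansion of the same argument.
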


\begin{proof}
Though we are using a modified version of the twistor defined in \cite{CFLW},
essentially the same arguments appearing in {\em loc. cit.} work in this case. 
For completeness, we will provide some arguments for (2) here.

All the relations are clear except for \eqref{eq:commutatorrelation}, \eqref{eq:Eserrerel} and \eqref{eq:Fserrerel}.
The Serre relations can be proved as in \cite{CFLW}, or alternatively one may use \eqref{eq:twist vs ff-embeddings}
below. For the commutator relation,
\begin{align*}
&\bt_i^2E_i\tT_i\Upsilon_i F_j\Upsilon_{-j}-(-\pi)^{p(i)p(j)} F_j\Upsilon_{-j}\bt_i^2E_i\tT_i\Upsilon_i\\
&\hspace{2em}=\bt^{2d_i-\phi(i,j)+i\cdot j}(E_iF_j-\pi^{p(i)p(j)}\bt^{2p(i)p(j)-i\cdot j+\phi(i,j)-\phi(j,i)}F_jE_i)\tT_i\Upsilon_{i-j}\\
&\hspace{2em}=\bt^{2d_i-\phi(i,j)+i\cdot j}(E_iF_j-\pi^{p(i)p(j)}F_jE_i)\tT_i\Upsilon_{i-j}\\
&\hspace{2em}=\delta_{ij} \bt^{-1}\frac{\tJ_i\tK_i-\tK_i^{-1}}{\pi_iv_i-v_i^{-1}}\tT_i=\delta_{ij}\bt^{-1}\frac{\tT_i^2\tJ_i\tT_i^{-1}\tK_i-\tT_i\tK_i^{-1}}{(-\pi_i\bt_i^{-1}v_i-\bt_iv_i^{-1}}.
\end{align*}
\end{proof}

There is another version of the twistor defined on $\ff[\bt]=\Q^\pi(v,\bt)\otimes_{\Qvp}\ff$.

\begin{proposition}{\bf \cite[Theorem 2.4]{CFLW}}\label{prop:halfquantumtwist}
Define a product $*$ on $\ff[\bt]$ by the following rule: if $x$ and $y$ are homogeneous elements of $\ff[\bt]$,
let $x*y=\bt^{\phi(|x|,|y|')}xy$. Then there is a $\Q(\bt)$-linear algebra isomorphism $\Tw:\ff\rightarrow (\ff,*)$
defined by
\[\Tw(\theta_i)=\theta_i,\quad \Tw(v)=\bt^{-1}v,\quad \Tw(\pi)=-\pi.\]
Moreover, $\Tw$ satisfies $\Tw(b)=\bt^{\ell(b)} b$ for all $b\in \cB(\infty)$,
where $\ell(b)$ is some integer depending on $b$.
\end{proposition}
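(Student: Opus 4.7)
The plan has three main steps: well-definedness of $\Tw$ as a $\Q(\bt)$-algebra map, bijectivity, and the identity on the canonical basis.

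First, I would lift $\Tw$ to the free $\Q^\pi(\bt,v)$-algebra on the $\theta_i$, where it is manifestly an algebra map to the free algebra equipped with $*$; here associativity of $*$ is immediate from property (a) of the enhancer $\phi$. It then suffices to check that the image of each Serre relation \eqref{eq:thetaserrerel} vanishes in $(\ff[\bt],*)$. The central computation is to expand
\[
\theta_i^{*(b_{ij}-k)}*\theta_j*\theta_i^{*k} = \bt^{N_{ij}-k(i\cdot j+2p(i)p(j))}\,\theta_i^{b_{ij}-k}\theta_j\theta_i^k,
\]
where $N_{ij}$ depends only on $i,j$. This uses property (a) to additively collect the $\phi$-exponents coming from the $*$-products, together with property (c) to reduce the $k$-linear contribution $(b_{ij}-k)\phi(i,j')+k\phi(j,i')$ modulo $4$ to $b_{ij}\phi(i,j')-k(i\cdot j+2p(i)p(j))$. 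In parallel, the substitutions $v\mapsto \bt^{-1}v$, $\pi\mapsto -\pi$ transform $\bbinom{b_{ij}}{k}_{v_i,\pi_i}$ by a factor $(-\bt^{-d_i})^{k(b_{ij}-k)}$ (computed factor by factor in the explicit product formula), and $\pi^{\binom{k}{2}p(i)+kp(i)p(j)}$ by a compatible sign. Combining all contributions, the $k$-dependent $\bt$-powers cancel and the Serre relation is recovered up to an overall $k$-independent $\bt$-prefactor.

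Bijectivity is then immediate: $\Tw$ preserves the weight grading $\ff=\bigoplus_\nu \ff_\nu$, and one can exhibit an explicit inverse defined by the analogous recipe $v\mapsto \bt v$, $\pi\mapsto -\pi$ on $(\ff[\bt],*)$ with target $\ff[\bt]$ under the ordinary product, which inverts $\Tw$ by a direct check on generators.

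For the last assertion, I would appeal to the rigidity of $\cB(\infty)$: a canonical basis element of $\ff$ is uniquely characterized among elements of the integral form $\fint$ by bar-invariance together with a prescribed leading monomial modulo $v\fint$. Extending $\bar{\phantom{x}}$ to $\ff[\bt]$ by $\bar\bt=\bt^{-1}$, one checks on the generators $\theta_i, v, \pi$ that $\Tw\,\bar{\phantom{x}}$ and $\bar{\phantom{x}}\,\Tw$ differ by a weight-dependent sign (i.e., an integer power of $\bt^2$). Since $\Tw(\theta_i)=\theta_i$, an induction on height shows that $\Tw$ sends $\fint[\bt]$ into itself up to an overall $\bt$-scalar on each weight space. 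Hence, for each $b\in\cB(\infty)$, there is an integer $\ell(b)$ such that $\bt^{-\ell(b)}\Tw(b)$ satisfies the defining conditions of a canonical basis element and therefore equals $b$. The main obstacle is precisely this calibration: ensuring that after the weight-dependent rescaling the image lies in the correct lattice and is bar-invariant with the correct leading term, so that the rigidity characterization applies verbatim.
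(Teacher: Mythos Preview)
The paper does not give its own proof of this proposition; it is quoted from \cite{CFLW} (the algebra isomorphism is Theorem~2.4 there, and the canonical basis statement is Theorem~3.6). So there is no in-paper argument to compare against, and your proposal must stand on its own.

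Your treatment of well-definedness and bijectivity is essentially correct. The associativity of $*$ follows from additivity of $\phi$ in each slot, and your reduction of the Serre relation is the right computation: the $k$-dependent $\bt$-exponent coming from the $*$-products is $k(\phi(j,i')-\phi(i,j'))\equiv_4 -k(i\cdot j+2p(i)p(j))$ by enhancer axiom~(c), and this cancels against the effect of $v\mapsto \bt^{-1}v$, $\pi\mapsto -\pi$ on the binomial and sign prefactors (one small correction: the binomial picks up $\bt^{d_i k(b_{ij}-k)}$, not $(-\bt^{-d_i})^{k(b_{ij}-k)}$; these agree only mod~$4$ when $d_i$ is odd, but the full cancellation still goes through once you also track the sign from $(-1)^k$ and $\pi^{\binom{k}{2}p(i)+kp(i)p(j)}$). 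The inverse is as you describe.

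The gap is in the canonical basis step. Your assertion that ``$\Tw$ sends $\fint[\bt]$ into itself up to an overall $\bt$-scalar on each weight space'' is false as stated: distinct monomials $\theta_{i_1}\cdots\theta_{i_n}$ of the same weight pick up different powers of $\bt$ under $\Tw$ (namely $\bt^{\sum_{s<t}\phi(i_s,i_t')}$), so $\Tw$ is not a scalar on weight spaces. Consequently your calibration argument does not go through: you cannot choose a single $\ell(b)$ making $\bt^{-\ell(b)}\Tw(b)$ land in $\fint$ (over $\Z^\pi[v^{\pm1}]$, without $\bt$) by this reasoning alone, and the rigidity characterization of $\cB$ you invoke lives over $\Q^\pi(v)$, not $\Q^\pi(v,\bt)$. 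What is actually needed is a compatibility of $\Tw$ with the crystal structure: one shows that $\Tw$ preserves the crystal lattice $\cL(\infty)[\bt]$ and intertwines the Kashiwara operators $\tilde e_i,\tilde f_i$ up to explicit $\bt$-powers (this is how \cite{CFLW} proceeds). Then $\Tw(b)\in \bt^{\ell(b)}b + v\cL(\infty)[\bt]$ for a specific $\ell(b)$, the bar-commutation you noted fixes the parity of $\ell(b)$, and the standard uniqueness argument finishes. Alternatively, one can pass to the specializations via $\epsilon_\pm$ (cf.\ the proof of Lemma~\ref{lem:extra CB props}(3) in this paper) and use the known characterization of $\cB|_{\pi=1}$ and $\cB|_{\pi=-1}$ separately. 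Either way, the missing ingredient is the interaction of $\Tw$ with the crystal lattice, not merely with $\fint$ and bar.
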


Now note that the maps $\theta_i\mapsto \theta_i^-=F_i$ and $\theta_i\mapsto \theta_i^+=E_i$ extend
to embeddings $\ff[\bt]\rightarrow \hatU^-$ and $\ff[\bt]\rightarrow \hatU^+$. These embeddings relate
the twistor maps as follows: for $\nu=\sum_{i\in I} \nu_i i\in \N[I]$ 
and $x\in \ff[\bt]_{\nu}$,
\begin{equation}\label{eq:twist vs ff-embeddings}
\Tw(x^-)=\Tw(x)^-\Upsilon_{-\nu},\qquad 
\Tw(x^+)=\bt^{\bullet(\nu)+\sum 2d_i\nu_i}\Tw(x)^+\tT_{|x|}\Upsilon_{|x|}. 
\end{equation}

These equations imply the following result.
\begin{lemma}\label{lem:tw and omega CB}
Let $b\in \cB$. Then $\Tw(\omega(b^-))=\bt^{\ell(b)+\bullet(\nu)}\omega(b^-)
\Upsilon_\nu \tT_{-\nu}$.
\end{lemma}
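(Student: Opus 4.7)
The plan is to translate $\Tw(\omega(b^-))$ directly through the formulas already at our disposal, principally the identity \eqref{eq:twist vs ff-embeddings} relating $\Tw$ on $\UU^+$ and $\UU^-$ to the inner $\Tw$ of Proposition \ref{prop:halfquantumtwist}, and then to massage the resulting expression back into a multiple of $\omega(b^-)\Upsilon_\nu\tT_{-\nu}$. Throughout, I will write $\nu=|b|$.

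First I would replace $\omega(b^-)$ with something living essentially in $\UU^+$. Since $\omega(F_i)=\pi_i\tJ_i E_i$, and since bar-consistency makes $i\cdot j\in 2\Z$, we have $\tJ_\mu E_j=\pi^{\ang{\tilde\mu,j'}}E_j\tJ_\mu=E_j\tJ_\mu$ for all $\mu\in\Z[I]$; writing $b$ as a monomial in the $\theta_i$'s and applying $\omega$ factor-by-factor, all $\tJ$'s and scalars may be collected to the left, giving the clean formula
\[
\omega(b^-)=\pi_\nu\tJ_\nu b^+ .
\]
Applying $\Tw$ and using $\Tw(\pi)=-\pi=\bt^{2}\pi$, $\Tw(\tJ_\nu)=\tT_\nu^{\,2}\tJ_\nu$, and \eqref{eq:twist vs ff-embeddings} together with $\Tw(b)=\bt^{\ell(b)}b$, I obtain
\[
\Tw(\omega(b^-))=\bt^{\,4\sum d_i\nu_i+\bullet(\nu)+\ell(b)}\pi_\nu\,\tT_\nu^{\,2}\tJ_\nu\, b^+\tT_\nu\Upsilon_\nu,
\]
and $\bt^{4\sum d_i\nu_i}=1$ kills the first contribution.

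Now I would move the Cartan-type elements past $b^+$. The element $\tJ_\nu$ commutes with $b^+$ by the observation above. For $\tT_\nu^{\,2}$, the relation $T_\mu E_i=\bt^{\ang{\mu,i'}}E_i T_\mu$ gives $\tT_\nu^{\,2}b^+=\bt^{2\ang{\tilde\nu,\nu'}}b^+\tT_\nu^{\,2}=\bt^{2\nu\cdot\nu}b^+\tT_\nu^{\,2}=b^+\tT_\nu^{\,2}$, since $\nu\cdot\nu\in 2\Z$ (direct expansion) and $\bt^4=1$. Finally, from $T_\nu^{\,4}=1$ one has $\tT_\nu^{\,3}=\tT_{-\nu}$. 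After these moves and using $\Upsilon$ commuting with $T$,
\[
\Tw(\omega(b^-))=\bt^{\,\bullet(\nu)+\ell(b)}\pi_\nu b^+\tJ_\nu\tT_{-\nu}\Upsilon_\nu=\bt^{\,\bullet(\nu)+\ell(b)}\pi_\nu\tJ_\nu b^+\Upsilon_\nu\tT_{-\nu}=\bt^{\,\bullet(\nu)+\ell(b)}\omega(b^-)\Upsilon_\nu\tT_{-\nu},
\]
which is precisely the claim.

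The calculation is essentially bookkeeping; the only point requiring care is that several exponents of $\bt$ must be reduced modulo $4$, which is where bar-consistency (\eqref{eq:even}, giving $\nu\cdot\nu\in 2\Z$) does the real work. I expect this sign-control — ensuring every stray $\bt^{2k}$ picked up from moving $\tT$'s and $\pi$'s collapses modulo $4$ — to be the only genuine obstacle, and it evaporates once one notes that $\sum d_i\nu_i$ appears only in exponents divisible by $4$ and that $\ang{\tilde\nu,\nu'}=\nu\cdot\nu$.
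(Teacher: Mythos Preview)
Your proof is correct and follows essentially the same computation as the paper's: write $\omega(b^-)=\pi_\nu\tJ_\nu b^+$, apply $\Tw$ using \eqref{eq:twist vs ff-embeddings} and the action on $\pi$ and $J_\mu$, then collapse the resulting $\tT$-factors via $T_\mu^4=1$. Your version is in fact more explicit than the paper's terse one-line calculation (which also contains a typo, writing $\omega(b^+)$ for $\omega(b^-)$); the only slight imprecision is the phrase ``writing $b$ as a monomial,'' since $b\in\cB$ need not be a monomial, but the identity $\omega(x^-)=\pi_\nu\tJ_\nu x^+$ for $x\in\ff_\nu$ follows by linearity from the monomial case, so no harm is done.
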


\begin{proof}
Since $\Tw(x^+)=\bt^{\bullet(\nu)+\sum 2d_i\nu_i}\Tw(x)^+\tT_{\nu}\Upsilon_{\nu}$
for $x\in \ff[\bt]_\nu$, then in particular for $b\in \cB(\nu)_{\nu}$,
\[
\Tw(\omega(b^+))=\Tw(b^+\pi_\nu \tJ_{\nu})=\bt^{\ell(b)+\bullet(\nu)+\sum 2d_i\nu_i}(-1)^{\sum \nu_i d_i}\pi_\nu\tJ_\nu b^+
\Upsilon_\nu \tT_{-\nu}.\]

\if 0
If $\Tw(x^+)=\Tw(x)^+T_{\tilde{\nu}}T_{-\nu}$ and 
$\Tw(y^+)=\Tw(y)^+T_{\tilde{\mu}}T_{-\mu}$,
then 
\[
\Tw(x^+)\Tw(y^+)=\Tw(x)^+T_{\tilde{\nu}}T_{-\nu}\Tw(y)^+T_{\tilde{\mu}}T_{-\mu}
=\bt^{\tilde \nu\cdot \mu-\phi(\nu,\mu)}\Tw(x)^+\Tw(y)^+T_{\widetilde{\mu+\nu}}T_{-\nu-\mu}
=(-1)^{-\phi(\nu,\mu)}\bt^{ \nu\cdot \mu}\Tw(xy)^+T_{\widetilde{\mu+\nu}}T_{-\nu-\mu}
\]
\fi
\end{proof}

Recall the $\Qvp$-linear differentials ${}_ir,r_i:\ff\rightarrow \ff$ defined by ${}_i r(\theta_j)=r_i(\theta_j)=\delta_{ij}$,
\[{}_i r(xy)={}_ir(x)y+\pi^{p(x)p(i)}v^{-i\cdot |x|}x\ {}_ir(y),\qquad r_i(xy)=\pi^{p(x)p(i)}v^{-i\cdot |x|}r_i(x)y+xr_i(y).\]
These maps trivially extend to $\Q^\pi(v,\bt)$-linear maps on $\ff[\bt]$.

\begin{lemma}
For any $x\in \ff[\bt]_\nu$,
\[\Tw({}_i r(x))=\bt^{-\phi(i,\nu'-i')}{}_i r(\Tw(x)),\quad \Tw(r_i(x))=\bt^{-\phi(\nu-i,i')}r_i(\Tw(x)).\]
\end{lemma}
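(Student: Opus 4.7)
The plan is to proceed by induction on the height of $\nu$. Since $\Tw$, ${}_i r$, and $r_i$ are all $\Q(\bt)$-linear, it suffices to verify both identities on monomials $x = \theta_{j_1}\cdots \theta_{j_n}$. The base case ($n=1$) reduces to the evident statements ${}_i r(\theta_j) = r_i(\theta_j) = \delta_{ij}$ together with $\phi(i,0)\equiv_4 0$, which follows from Definition \ref{definition:enhancer}(1) applied with $\lambda = \mu' = 0$.

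For the inductive step on ${}_i r$, I will factor $x = x_1 x_2$ with $x_1\in \ff[\bt]_\alpha$, $x_2\in \ff[\bt]_\beta$ of smaller height, apply $\Tw$ to the derivation formula for ${}_i r(x_1 x_2)$, and use Proposition \ref{prop:halfquantumtwist} in the form $\Tw(ab) = \bt^{\phi(|a|,|b|')}\Tw(a)\Tw(b)$ together with $\Tw(\pi) = -\pi$ and $\Tw(v) = \bt^{-1}v$. Comparing with ${}_i r$ applied directly to $\Tw(x_1)\Tw(x_2)$ obtained from $\Tw(x_1 x_2) = \bt^{\phi(\alpha,\beta')}\Tw(x_1)\Tw(x_2)$, and substituting the inductive hypothesis on $\Tw({}_i r(x_j))$, the claim reduces to two congruences of $\bt$-exponents modulo $4$, corresponding to the coefficients of ${}_i r(\Tw(x_1))\Tw(x_2)$ and $\Tw(x_1){}_i r(\Tw(x_2))$ respectively.

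The first congruence
\[\phi(\alpha - i, \beta') - \phi(i, \alpha' - i') \equiv_4 \phi(\alpha, \beta') - \phi(i, \nu' - i')\]
is a direct consequence of Definition \ref{definition:enhancer}(1). The more delicate second congruence, after accounting for $(-\pi)^{p(\alpha)p(i)} = \bt^{2p(\alpha)p(i)}\pi^{p(\alpha)p(i)}$, reduces to
\[2p(\alpha)p(i) + i\cdot\alpha + \phi(i, \alpha') - \phi(\alpha, i') \equiv_4 0.\]
The main obstacle will be establishing the extension
\[\phi(i, \alpha') - \phi(\alpha, i') \equiv_4 i\cdot\alpha + 2p(i)p(\alpha) \qquad (\alpha \in \Z[I])\]
of Definition \ref{definition:enhancer}(3) from simple roots to general $\alpha$. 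I will obtain this by expanding $\alpha = \sum_j \alpha_j j$, distributing $\phi$ over the sum via Definition \ref{definition:enhancer}(1), applying Definition \ref{definition:enhancer}(3) to the off-diagonal terms, and handling the diagonal $j=i$ term via $\phi(i,i')\equiv_4 d_i$ combined with the bar-consistency condition $d_i + p(i) \in 2\Z$. Plugging this in and invoking \eqref{eq:even} (so that $i\cdot\alpha\in 2\Z$) yields $2i\cdot\alpha + 4p(i)p(\alpha) \equiv_4 0$, completing the step.

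The identity for $r_i$ is proved by the same inductive argument, replacing the derivation rule for ${}_i r$ with that of $r_i$ (which uses $|y|$ in place of $|x|$); this swaps the roles of $\alpha$ and $\beta$ in the parallel computation but invokes exactly the same enhancer identities.
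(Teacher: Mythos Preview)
Your proposal is correct and follows essentially the same route as the paper, whose printed proof is just ``a straightforward verification'' (the source contains a commented-out computation carrying out precisely this induction on height, factoring $x=x_1x_2$, applying the twisted-product formula $\Tw(ab)=\bt^{\phi(|a|,|b|')}\Tw(a)\Tw(b)$, and reducing to the same two exponent congruences). Your handling of the key identity $\phi(i,\alpha')-\phi(\alpha,i')\equiv_4 i\cdot\alpha+2p(i)p(\alpha)$ via the bar-consistency condition $d_i\equiv p(i)\pmod 2$ and \eqref{eq:even} is exactly what is needed.
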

\begin{proof}
This is a straightforward verification.
\if 0
This is trivially true for $x=\theta_j$ for $j\in I$. Suppose the lemma holds for homogeneous $x,y\in \ff[\bt]$;
we will show it holds for $xy$. Let $\nu=|x|$ and $\mu=|y|$.

\begin{align*}
\Tw({}_i r(xy))
&=\Tw({}_ir(x)y+\pi^{p(\nu)p(i)}v^{-i\cdot \nu}x{}_ir(y))\\
&=\bt^{\phi(\nu-i,\mu')}\Tw({}_ir(x))\Tw(y)+(-\pi)^{p(\nu)p(i)}\bt^{i\cdot \nu+\phi(\nu,\mu'-i')}v^{-i\cdot \nu}\Tw(x)\Tw({}_ir(y))\\
&=\bt^{\phi(\nu-i,\mu')-\phi(i,\nu'-i')}{}_ir(\Tw(x))\Tw(y)+(-\pi)^{p(\nu)p(i)}\bt^{i\cdot \nu+\phi(\nu,\mu'-i')-\phi(i,\mu'-i')}v^{-i\cdot \nu}\Tw(x){}_ir(\Tw(y))\\
&=\bt^{-\phi(i,\nu'+\mu'-i')}\bt^{\phi(\nu,\mu')}\parens{{}_ir(\Tw(x))\Tw(y)+\pi^{p(\nu)p(i)}v^{-i\cdot \nu}\Tw(x){}_ir(\Tw(y))}\\
&=\bt^{-\phi(i,\nu'+\mu'-i')}\ {}_i r(\bt^{\phi(\nu,\mu')}\Tw(x)\Tw(y))\\
=\bt^{-\phi(i,\nu'+\mu'-i')}\ {}_i r(\Tw(xy))
\end{align*}

\begin{align*}
\Tw( r_i(xy))
&=\Tw(\pi^{p(x)p(i)}v^{-i\cdot \mu}r_i(x)y+xr_i(y))\\
&=\bt^{\phi(\nu-i,\mu')+i\cdot\mu+2p(\mu)p(i)} \pi^{p(\mu)p(i)}v^{-i\cdot \mu}\Tw(r_i(x))\Tw(y)+\bt^{\phi(\nu,\mu'-i')}\Tw(x)\Tw(r_i(y))\\
&=\bt^{\phi(\nu,\mu')}\parens{\bt^{-\phi(\mu,i')-\phi(\nu-i,i')} \pi^{p(\mu)p(i)}v^{-i\cdot \mu}r_i(\Tw(x))\Tw(y)
+\bt^{-\phi(\nu, i')-\phi(\mu-i,i')}\Tw(x)r_i(\Tw(y))}
\end{align*}
\fi
\end{proof}

\subsection{Twistors on simple modules}

Now we will expand on some additional results we shall need regarding the twistors and highest weight modules.
Recall from \cite{CHW1} that the Verma module $M(\lambda)$
is the vector space $\ff$ with module structure defined as follows: for homogeneous $x\in \ff$,
\[K_\mu 1=v^{\ang{\mu,\lambda-|x|'}} x,\quad J_\mu x=\pi^{\ang{\mu,\lambda-|x|'}} x,\quad F_i x=\theta_ix,\]
\[ E_i x=
\frac{\pi_i^{p(x)-p(i)}r_i(x)(\pi_iv_i)^{\ang{i,\lambda}}   - v_i^{-\ang{i,\lambda-|x|'+i'}}\ {}_ir(x)}{\pi_i v_i-v_i^{-1}}.\]

\begin{lemma}
Let $\Tw_\lambda:M(\lambda)[\bt]\rightarrow M(\lambda)[\bt]$ be the map defined by
\[\Tw_\lambda(x)=\bt^{-\phi(\nu,\lambda)} \Tw(x)\]
for homogeneous $x\in \ff[\bt]_{\nu}$.
Then $\Tw_\lambda(ux)=\Tw(u)\Tw_\lambda(x)$ for all $u\in \hatU$ and $x\in \ff[\bt]$.
\end{lemma}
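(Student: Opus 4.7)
The plan is to verify the intertwining property $\Tw_\lambda(ux)=\Tw(u)\Tw_\lambda(x)$ on the algebra generators $K_\mu, J_\mu, T_\mu, \Upsilon_\mu, E_i, F_i$ of $\hatU$; since both sides are $\Q(\bt)$-linear in $u$ and $x$ and multiplicative in $u$, this will suffice. Throughout I will write $x\in\ff[\bt]_\nu$ and use that by construction the weight of $\Tw_\lambda(x)$ as an element of $M(\lambda)[\bt]$ is still $\lambda-\nu'$.

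First I would handle the Cartan-like generators. For $K_\mu$ and $J_\mu$, the statement amounts to matching $v^{\ang{\mu,\lambda-\nu'}}$ and $\pi^{\ang{\mu,\lambda-\nu'}}$ with their images under $\Tw$, using $\Tw(v)=\bt^{-1}v$ and $\Tw(\pi)=-\pi$ and the action of the new generators $T_{-\mu}$ and $T_{2\mu}$ occurring in $\Tw(K_\mu)$, $\Tw(J_\mu)$; these are immediate. For $T_\mu$ and $\Upsilon_\mu$, which act on $M(\lambda)[\bt]$ by the scalars $\bt^{\ang{\mu,\lambda}}$ and $\bt^{\phi(\mu,\lambda)}$ respectively, the identity reduces to the additivity of $\ang{\cdot,\cdot}$ and of $\phi$ in the first argument (Definition \ref{definition:enhancer}(1)), modulo $4$.

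Next I would treat $F_i$. Using $F_i x=\theta_i x$ in $M(\lambda)$ and the $*$-multiplicativity of $\Tw:\ff[\bt]\to(\ff[\bt],*)$ from Proposition \ref{prop:halfquantumtwist}, one computes $\Tw(\theta_i x)=\bt^{\phi(i,\nu')}\theta_i\Tw(x)$. Comparing with $\Tw(F_i)\Tw_\lambda(x)=F_i\Upsilon_{-i}\cdot\bt^{-\phi(\nu,\lambda)}\Tw(x)=\bt^{-\phi(i,\lambda-\nu')-\phi(\nu,\lambda)}\theta_i\Tw(x)$ reduces the claim to the congruence $\phi(i,\nu')-\phi(\nu+i,\lambda)\equiv_4 -\phi(i,\lambda-\nu')-\phi(\nu,\lambda)$, which follows from the bi-additivity of $\phi$ modulo $4$.

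The main work is the $E_i$ case, and this will be the principal obstacle. Starting from the explicit formula for $E_ix$ in terms of $r_i(x)$ and ${}_ir(x)$, I would apply $\Tw_\lambda$ to each of the two summands (which sit in weight $\nu-i$), using the previous lemma to pull $\Tw$ past $r_i$ and ${}_ir$ at the cost of the explicit factors $\bt^{-\phi(\nu-i,i')}$ and $\bt^{-\phi(i,\nu'-i')}$. On the other side, using \eqref{eq:twist vs ff-embeddings} (or the definition $\Tw(E_i)=\bt_i^2 E_i\tT_i\Upsilon_i$), I would expand $\Tw(E_i)\Tw_\lambda(x)$ by commuting $\tT_i$ and $\Upsilon_i$ through $\Tw(x)$ (picking up $\bt^{i\cdot\nu}$ and $\bt^{\phi(i,\nu')}$ respectively) and then applying the $E_i$-action formula in $M(\lambda)$ with $v$ replaced by $\bt^{-1}v$ and $\pi$ by $-\pi$. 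It then remains a bookkeeping exercise to check the two resulting expressions agree term by term; each check reduces, via Definition \ref{definition:enhancer}(3) and the relation $\phi(i,i')\equiv_4 d_i$, to a congruence mod $4$ among the exponents of $\bt$. The main danger is sign/phase errors in $\pi^{p(i)p(x)}$ and $v_i^{\pm\ang{i,\lambda-\nu'+i'}}$, so I would carry out both the $r_i$-term and ${}_ir$-term comparisons fully in parallel, reading off the required $\phi$-congruences at the end. Once both terms match, multiplicativity extends the identity from generators to all of $\hatU$, completing the proof.
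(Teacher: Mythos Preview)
Your proposal is correct and follows essentially the same approach as the paper: reduce to generators, dispatch the Cartan-type generators by weight considerations, handle $F_i$ via the $*$-multiplicativity of $\Tw$ on $\ff[\bt]$, and for $E_i$ push $\Tw_\lambda$ through the explicit $r_i/{}_ir$ formula using the previous lemma, then match exponents of $\bt$ via the congruences in Definition~\ref{definition:enhancer}. One small slip: $T_\mu$ and $\Upsilon_\mu$ act on the $\lambda-\nu'$ weight space by $\bt^{\ang{\mu,\lambda-\nu'}}$ and $\bt^{\phi(\mu,\lambda-\nu')}$, not $\bt^{\ang{\mu,\lambda}}$ and $\bt^{\phi(\mu,\lambda)}$; since $\Tw$ fixes $T_\mu,\Upsilon_\mu$ and $\Tw_\lambda$ preserves weight spaces, this case is in fact immediate.
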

\begin{proof}
It is enough to prove the lemma for the generators of $\hatU$.
First note that $\Tw_\lambda$ preserves weight spaces, and so the lemma is clear for $T_\mu$ and $\Upsilon_\mu$;
it is also clear that $\Tw(vx)=\Tw(v)\Tw_\lambda(x)$ and $\Tw(\pi x)=\Tw(\pi)\Tw_\lambda(x)$, whence $\Tw_\lambda(K_\mu x)=\Tw(K_\mu)\Tw_\lambda(x)$
and $\Tw_\lambda(J_\mu x)=\Tw(J_\mu)\Tw_\lambda(x)$. It remains to check for $u=E_i$ and $u=F_i$.

Let $x\in \ff[\bt]_{\nu}$. Then for $u=F_i$ we have
\[\Tw_\lambda(F_ix)=\Tw_\lambda(\theta_i x)=\bt^{\phi(i,\nu')-\phi(\nu+i,\lambda)} \theta_i\Tw(x)= F_i\Upsilon_{-i}\Tw_\lambda(x)=\Tw(F_i)\Tw_\lambda(x).\]
On the other hand, for $u=E_i$ we have
\begin{align*}
&\Tw_\lambda(E_ix)
=\Tw_\lambda\parens{\frac{\pi_i^{p(\nu)-p(i)}r_i(x)(\pi_iv_i)^{\ang{i,\lambda}}   - v_i^{-\ang{i,\lambda-\nu'+i'}}\ {}_ir(x)}{\pi_i v_i-v_i^{-1}}}\\
&=\bt^{-d_i-\phi(\nu-i,\lambda)}
\frac{\bt_i^{2p(i)p(\nu-i)+\ang{i,\lambda}}\pi_i^{p(\nu)-p(i)}\Tw(r_i(x))(\pi_iv_i)^{\ang{i,\lambda}}   
- \bt_i^{\ang{i,\lambda-\nu+i'}}v_i^{-\ang{i,\lambda-\nu+i'}}\Tw({}_ir(x))}
{\pi_i v_i-v_i^{-1}}\\
&=\bt^{-d_i-\phi(\nu-i,\lambda)+d_i\ang{i,\lambda-\nu+i'}-\phi(i,\nu'-i')}
\frac{\bt_i^{\bigstar}\pi_i^{p(\nu)-p(i)}r_i(\Tw(x))(\pi_iv_i)^{\ang{i,\lambda}}   
- v_i^{-\ang{i,\lambda-\nu+i'}}{}_ir(\Tw(x))}
{\pi_i v_i-v_i^{-1}}
\end{align*}
where $\bigstar=2p(i)p(\nu-i)+d_i\ang{i,\lambda}-\phi(\nu-i,i)-d_i\ang{i,\lambda-\nu+i'}+\phi(i,\nu'-i')\equiv_4 0$.
Therefore,
\begin{align*}
\Tw_\lambda(E_ix)&
=\bt^{-d_i-\phi(\nu-i,\lambda)+d_i\ang{i,\lambda-\nu+i'}-\phi(i,\nu'-i')}E_i\Tw(x)\\
&=\bt_i^{\ang{i,\lambda-\nu}}\bt^{\phi(i,\lambda-\nu)}\bt^{-\phi(\nu,\lambda)+2d_i}E_i\Tw(x)\\
&=\bt_i^2E_i\tT_i\Upsilon_i \Tw_\lambda(x)=\Tw(E_i)\Tw_\lambda(x).
\end{align*}
\end{proof}

Recall that $V(\lambda)=\ff/(\theta_i^{\ang{i,\lambda}+1}:i\in I)$
as vector spaces. Then $V(\lambda)[\bt]=\ff[\bt]/(\theta_i^{\ang{i,\lambda}+1}:i\in I)$.
Since we further have
\[\Tw_\lambda((\theta_i^{\ang{i,\lambda}+1}:i\in I))=(\theta_i^{\ang{i,\lambda}+1}:i\in I)\]
we see that $\Tw_\lambda$ induces a $\Q(\bt)$-linear isomorphism
\[\Tw_{\lambda}: V(\lambda)[\bt]\rightarrow V(\lambda)[\bt].\]

\begin{lemma}\label{lem:modtwistor}
There is a $\Q(\bt)$-linear map $\Tw_\lambda:V(\lambda)[\bt]\rightarrow V(\lambda)[\bt]$ 
which satisfies $\Tw_\lambda(\eta_\lambda)=\eta_\lambda$ and $\Tw_\lambda(um)=\Tw(u)\Tw_\lambda(m)$ for all $u\in \hatU$
and $m\in V(\lambda)[\bt]$.
\end{lemma}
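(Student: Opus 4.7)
The plan is to deduce this lemma directly from the preceding construction of $\Tw_\lambda$ on the Verma module $M(\lambda)[\bt]$ (the lemma immediately before), where it is defined by $\Tw_\lambda(x) = \bt^{-\phi(\nu,\lambda)}\Tw(x)$ for $x \in \ff[\bt]_\nu$ and already shown to satisfy $\Tw_\lambda(uy) = \Tw(u)\Tw_\lambda(y)$ for $u \in \hatU$, $y \in M(\lambda)[\bt]$. The strategy is simply to descend this map to the quotient $V(\lambda)[\bt] = M(\lambda)[\bt]/\mathcal{T}[\bt]$, where $\mathcal{T}$ is the submodule generated by the vectors $\theta_i^{\ang{i,\lambda}+1}$ for $i \in I$.

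The key verification is that $\Tw_\lambda$ stabilizes $\mathcal{T}[\bt]$. Since each generator $\theta_i^{\ang{i,\lambda}+1}$ is homogeneous and the twistor of Proposition \ref{prop:halfquantumtwist} fixes $\theta_i$ while replacing the product with the $\bt$-twisted $*$-product, an easy induction shows $\Tw(\theta_i^{n}) = \bt^{\star}\theta_i^{n}$ for some integer $\star = \star(i,n)$. Consequently $\Tw_\lambda$ multiplies each generator of $\mathcal{T}[\bt]$ by an element of $\Q(\bt)^\times$, and the intertwining relation on $M(\lambda)[\bt]$ then propagates this stability to the entire $\UU$-submodule $\mathcal{T}[\bt]$. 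Hence $\Tw_\lambda$ induces a well-defined $\Q(\bt)$-linear endomorphism of $V(\lambda)[\bt]$, and the intertwining property $\Tw_\lambda(um) = \Tw(u)\Tw_\lambda(m)$ descends immediately from $M(\lambda)[\bt]$.

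Finally, the normalization $\Tw_\lambda(\eta_\lambda) = \eta_\lambda$ follows from the definition: $\eta_\lambda$ is the image of $1 \in \ff[\bt]$, which has weight $0 \in \Z[I]$, so
\[
\Tw_\lambda(1) = \bt^{-\phi(0,\lambda)}\Tw(1) = 1,
\]
using that $\phi$ is additive in the first argument (so $\phi(0,\lambda) \equiv_4 0$) and that $\Tw$ fixes $1$. I do not anticipate any serious obstacle; the only point requiring a moment's care is the computation that $\Tw$ scales homogeneous elements of $\ff[\bt]$ by $\bt$-powers, which is a direct consequence of the $*$-product structure in Proposition \ref{prop:halfquantumtwist}.
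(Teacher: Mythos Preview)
Your proposal is correct and follows essentially the same route as the paper: the paper's argument is contained in the paragraph immediately preceding the lemma, where it observes that $\Tw_\lambda$ preserves the ideal $(\theta_i^{\ang{i,\lambda}+1}:i\in I)$ in $\ff[\bt]$ and therefore descends to a $\Q(\bt)$-linear isomorphism of $V(\lambda)[\bt]$, with the intertwining property and normalization inherited from the Verma level. Your only slight imprecision is the remark that $\Tw$ scales \emph{homogeneous} elements by $\bt$-powers; this is true for monomials (which is all you need for $\theta_i^n$), but not for arbitrary homogeneous elements.
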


\subsection{Twistors on tensors}

Let $a,a',P(\lambda-\lambda',a,a')$ be as in \S\ref{subsec:dotU and N}. By
the results of that section, we have an identification 
\[\dotU 1_{\lambda-\lambda'}/P(\lambda-\lambda',a,a')= N(\lambda,\lambda')\]
as $\UU$-modules. It is easy to see that $\dot\Tw$ preserves $\dotU1_{\lambda-\lambda'}$ and $P(\lambda-\lambda',a,a')$, so in particular 
$\dot\Tw:\dotU\rightarrow \dotU$ induces a $\UU$-module homomorphism
$\Tw_{\lambda,\lambda'}:N(\lambda,\lambda')\rightarrow N(\lambda,\lambda')$ such that 
\[\Tw_{\lambda,\lambda'}(\Delta(u) \eta_\lambda\otimes \xi_{-\lambda})=\Delta(\Tw(u)) \eta_\lambda\otimes \xi_{-\lambda}.\]

To understand this map better, first we must construct an analogue of Lemma \ref{lem:modtwistor} for
${}^\omega V(\lambda)[\bt]$. To that end,
set $\Tw'=\omega^{-1}\circ \Tw \circ \omega$; we note that $\Tw'(E_i)=\tT_{-i}\Tw(E_i)$, $\Tw'(F_i)=\Tw(F_i)\tT_i$,
$\Tw'(K_\nu)=\Tw(K_\nu)$, $\Tw'(J_\nu)=\Tw(J_\nu)$, $\Tw'(T_\nu)=\Tw(T_\nu)$.
We note that in particular,
\[\Delta(\Tw(E_i))=\Tw(E_i)\otimes \Tw'(\tK_i)\Upsilon_i+\bt_i^2(\Upsilon_i\otimes \Tw'(E_i))(\tT_i\otimes \tT_i),\]
\[\Delta(\Tw(F_i))=\Tw(F_i)\otimes \Upsilon_{-i}+(\Upsilon_{-i}\Tw(\tJ_i\tK_i)\otimes \Tw'(F_i))(\tT_{-i}\otimes \tT_{-i}).\]

There is a $\Q(\bt)$-linear map 
$\Tw_{-\lambda}:{}^\omega V(\lambda)[\bt]\rightarrow {}^\omega V(\lambda)[\bt]$ 
which satisfies $\Tw_\lambda(\xi_{-\lambda})=\xi_{-\lambda}$ and $\Tw_{-\lambda}(um)=\Tw'(u)\Tw_{-\lambda}(m)$ for all $u\in \hatU$
and $m\in {}^\omega V(\lambda)[\bt]$; indeed, view $m\in {}^\omega V(\lambda)[\bt]$ as an element of $V(\lambda)[\bt]$
as an element of $V(\lambda)$ and set $\Tw_{-\lambda}(m)=\Tw_{\lambda}(m)$. Then
\[\Tw_{-\lambda}(u\cdot m)=\Tw_\lambda(\omega(u)m)=\Tw(\omega(u)) \Tw_{\lambda}(m)=\Tw'(u)\cdot \Tw_{-\lambda}(m).\]

Then given $w\otimes w'\in N(\lambda,\lambda')$ with $|w|=\zeta$ and $|w'|=\zeta'$, we see that
\begin{align}
\label{eq:deltwE}\Delta(\Tw(E_i))(\Tw_\lambda(w)\otimes \Tw_{-\lambda'}(w')&=\bt^{\phi(i,\zeta')}\Tw_\lambda(E_iw)\otimes \Tw_{-\lambda'}(\tK_{-i}\cdot w')\\
\notag&\hspace{2em}+\pi^{p(i)p(w)}\bt^{2d_i+d_i\ang{i,\zeta+\zeta'}+\phi(i,\zeta)}
\Tw_\lambda(w)\otimes \Tw_{-\lambda'}(E_i\cdot w')
\end{align}
\begin{align}
\label{eq:deltwF}
\Delta&(\Tw(F_i))(\Tw_\lambda(w)\otimes \Tw_{-\lambda'}(w')\\
\notag&=\bt^{-\phi(i,\zeta')}\Tw_\lambda(F_iw)\otimes \Tw_{-\lambda'}( w')+\pi^{p(i)p(w)}\bt^{-d_i\ang{i,\zeta-\zeta'}-\phi(i,\zeta)}
\Tw_\lambda(\tJ_i\tK_i w)\otimes \Tw_{-\lambda'}(F_i\cdot w')
\end{align}

Let ${\rm \Lambda}_\lambda=\set{\lambda-\nu'\mid \nu\in \N[I]}$ and ${\rm V}_\lambda=\set{\nu'-\lambda\mid \nu\in \N[I]}$.
and for $\zeta\in {\rm \Lambda}_\lambda$ (resp. $\zeta\in {\rm V}_\lambda$) such that $\zeta=\lambda-\nu$ (resp. $\zeta=\nu-\lambda$), 
set $p(\zeta)=p(\nu)$.

\newcommand{\cX}{\varkappa}

\begin{lemma}
There is a function $\cX=\cX_{\lambda,\lambda'}:{\rm \Lambda}_\lambda\times {\rm V}_{\lambda'}\rightarrow \Z$
satisfying \begin{enumerate}
\item $\cX(\zeta-i',\zeta')\equiv_4 \cX(\zeta,\zeta')-\phi(i,\zeta')$;
\item $\cX(\zeta,\zeta'+i)\equiv_4 \cX(\zeta,\zeta')+\phi(i,\zeta)+2d_i+\ang{\tilde{i},\zeta+\zeta'}+2p(\zeta)p(i)$;
\item $\cX(\lambda,-\lambda')\equiv_4 0$.
\end{enumerate}
\end{lemma}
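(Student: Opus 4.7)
The plan is to define $\cX$ inductively by accumulating the increments prescribed by (1) and (2) along a path in ${\rm \Lambda}_\lambda \times {\rm V}_{\lambda'}$, starting from the basepoint $(\lambda, -\lambda')$ with $\cX(\lambda, -\lambda') := 0$. Any $(\zeta, \zeta') \in {\rm \Lambda}_\lambda \times {\rm V}_{\lambda'}$ has the form $(\lambda - \nu', \mu' - \lambda')$ for some $\nu, \mu \in \N[I]$, hence is reachable from $(\lambda, -\lambda')$ by a sequence of elementary moves: a type (1) move $(\eta, \eta') \mapsto (\eta - i', \eta')$ and a type (2) move $(\eta, \eta') \mapsto (\eta, \eta' + i')$. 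Accumulating the increments dictated by (1) and (2) along such a sequence yields a candidate value for $\cX(\zeta, \zeta')$ modulo $4$.

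The substantive content is showing this value is independent of the chosen path, which reduces to verifying that any two adjacent moves commute modulo $4$. First I would handle the easy cases: a pair of type (1) moves commutes trivially since both increments depend only on the (unchanged) second coordinate, and a pair of type (2) moves with indices $i, j$ commutes because the order-dependent discrepancy is $\ang{\tilde j, i'} - \ang{\tilde i, j'}$, which vanishes by the symmetric identity $\ang{\tilde k, \ell'} = d_k \ang{k, \ell'} = k \cdot \ell$ (obtained from the defining relation $\ang{k,\ell'}= 2(k\cdot \ell)/(k\cdot k)$).

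The main work, and the step I expect to be the main obstacle, is the mixed case: comparing a type (1) move with $i$ followed by a type (2) move with $j$ to the reverse order. I would expand both paths using Definition \ref{definition:enhancer}(1) to rewrite $\phi(j, \zeta - i') \equiv_4 \phi(j, \zeta) - \phi(j, i')$, and carefully track the parity shift $p(\zeta - i') \equiv p(\zeta) + p(i) \pmod 2$, which produces a $2p(i)p(j)$ term through the factor $2p(\zeta-i')p(j)$ appearing in (2). The resulting discrepancy between the two paths is a small $\Z$-linear combination of $\phi(i,j')$, $\phi(j,i')$, $\ang{\tilde j, i'}$, and $p(i)p(j)$, which should reduce via the defining congruence $\phi(i,j') - \phi(j,i') \equiv_4 i\cdot j + 2p(i)p(j)$ from Definition \ref{definition:enhancer}(3), combined with $\ang{\tilde j, i'} = i \cdot j$, to zero modulo $4$. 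The cancellation is tight: it uses both the $i\cdot j$ and the $2p(i)p(j)$ contributions of the enhancer identity, and the fact that we are working modulo $4$ (rather than $2$) is precisely what accommodates the parity correction. Once this commutation of elementary moves is in hand, path-independence follows by a standard transposition argument on sequences of moves, and properties (1)--(3) for $\cX$ then hold by construction.
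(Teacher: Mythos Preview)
Your proposal is correct and follows essentially the same strategy as the paper's proof: both construct $\cX$ by accumulating the increments in (1) and (2) from the basepoint $(\lambda,-\lambda')$ and then verify consistency. The paper organizes this by first setting $\cX(\lambda-\nu',-\lambda')=\phi(\nu,\lambda')$ (a closed form along the first axis), then extending in the second coordinate via (2) and checking that extension is independent of the chosen $i$ (your ``two type (2) moves'' case), and finally verifying (1) by induction on $\height(\mu)$ (your ``mixed'' case); your version packages the same three commutation checks as a single path-independence argument, which is a cleaner but equivalent presentation. One small point: the enhancer congruence $\phi(i,j')-\phi(j,i')\equiv_4 i\cdot j+2p(i)p(j)$ you invoke is stated in Definition~\ref{definition:enhancer}(3) only for $i\neq j$, so in the mixed case with $i=j$ you should separately note that the discrepancy $-\ang{\tilde i,i'}+2p(i)=-2d_i+2p(i)\equiv_4 0$ by bar-consistency.
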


\begin{proof}
Let $\mu,\nu\in \N[I]$ and write $\zeta=\lambda-\nu$, $\zeta'=\mu-\lambda'$.
Set 
\[\cX(\lambda,-\lambda')=0, \qquad \cX(\lambda-\nu',-\lambda')=\phi(\nu,\lambda').\]
Then we have defined $\cX(\lambda-\nu',\mu'-\lambda')$ for $\height(\mu)= 0$;
now assume $\height(\mu)>0$.
Define
\[\cX(\lambda-\nu',i'+\mu'-\lambda')=\cX(\lambda-\nu',\mu'-\lambda')+\phi(i,\lambda-\nu')+2d_i+\ang{\tilde{i},\lambda-\nu'+\mu'-\lambda'}+2p(\nu)p(i).\]
It is straightforward to check that this definition does not depend on the choice of $i$; that is, if $i+\mu=j+\hat \mu$ 
for some $\hat \mu\in \N[I]$,
then $\cX(\lambda-\nu',i'+\mu'-\lambda')=\cX(\lambda-\nu',j'+\hat \mu'-\lambda')$.

Now we need to check that $\cX(\lambda-\nu',\mu'-\lambda)$ satisfies (1) and (2).
By construction, we see that (1) and (2) hold for $\height(\nu+\mu)=0$, so
assume that $\height(\nu+\mu)>0$. By construction, (2) holds and so it suffices to check (1).
Write $\mu=\mu_1+j$ for some $j\in I$ and $\mu_1\in \N[I]$.
Then by induction we compute that
\begin{align*}
\cX(\lambda-\nu'-i',\mu'-\lambda)&\equiv_4\cX(\lambda-\nu'-i',\mu_1'-\lambda)+\phi(j,\lambda-\nu'-i')+2d_j\\
&\hspace{2em}+\ang{\tilde{j},\lambda-\nu'-i'+\mu_1'-\lambda'}+2p(\nu+i)p(i)\\
&\equiv_4\cX(\lambda-\nu',\mu_1'-\lambda)-\phi(i,\mu_1'-\lambda')+\phi(j,\lambda-\nu'-i')+2d_j\\
&\hspace{2em}+\ang{\tilde{j},\lambda-\nu'-i'+\mu_1'-\lambda'}+2p(\nu+i)p(i)\\
&\equiv_4\cX(\lambda-\nu',\mu_1'-\lambda)+2p(\nu)p(j)+\phi(j,\lambda-\nu')+2d_j\\
&\hspace{2em}+\ang{\tilde{j},\lambda-\nu'+\mu_1'-\lambda'}-i\cdot j+2p(i)p(j)-\phi(j,i)-\phi(i,\mu_1'-\lambda')\\
&\equiv_4\cX(\lambda-\nu',\mu'-\lambda)-i\cdot j+2p(i)p(j)-\phi(j,i)-\phi(i,\mu_1'-\lambda')
\end{align*}

Now $-i\cdot j+2p(i)p(j)-\phi(j,i)\equiv_4-\phi(i,j)$, and thus we see that
\[\cX(\lambda-\nu'-i',\mu'-\lambda)\equiv_4\cX(\lambda-\nu',\mu'-\lambda)-\phi(i,\mu'-\lambda').\]
This finishes the proof.
\end{proof}
\begin{proposition}\label{prop:Ntwistor}
The $\Q(\bt)$-linear map $\Tw_{\lambda,\lambda'}:N(\lambda,\lambda')[\bt]\rightarrow N(\lambda,\lambda')[\bt]$
defined by 
\[\Tw_{\lambda,\lambda'}(w\otimes w')= \bt^{\cX(|v|,|w|)} \Tw_{\lambda}(w)\otimes \Tw_{-\lambda'}(w'),\]
where $|w|=\lambda-\nu'$ and $|w'|=\lambda'-\mu'$ for $\nu,\mu\in \N[I]$.
Then
$\Tw_{\lambda,\lambda'}(\Delta(u)w\otimes w')=\Delta(\Tw(u)) \Tw_{\lambda,\lambda'}(w\otimes w')$.
\end{proposition}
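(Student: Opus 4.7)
The plan is to verify the intertwining identity on a set of generators of $\hatU$, namely the Chevalley generators $E_i,F_i$ together with the Cartan-type generators $K_\mu,J_\mu,T_\mu,\Upsilon_\mu$. For the Cartan-type generators, the claim is almost immediate: each acts as a scalar on weight spaces of $N(\lambda,\lambda')$, the prefactor $\bt^{\cX(\zeta,\zeta')}$ depends only on the bi-weight, and both $\Tw_\lambda$ and $\Tw_{-\lambda'}$ preserve the weight decomposition by Lemma~\ref{lem:modtwistor} and its ${}^\omega$-analogue. The prescribed action of $\Tw$ on $K_\mu$, $J_\mu$, $T_\mu$, $\Upsilon_\mu$ then makes the scalar factors line up on both sides.

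The real content lies in the $E_i$ and $F_i$ cases. For $u=F_i$ and a pure tensor $w\otimes w'$ with $w\in V(\lambda)_\zeta$ and $w'\in {}^\omega V(\lambda')_{\zeta'}$, I would expand
\[
\Delta(F_i)(w\otimes w')=F_iw\otimes w'+\pi_i^{p(w)}\tJ_i\tK_iw\otimes F_i\cdot w'
\]
using $\Delta_3(F_i)=F_i\otimes 1+\tJ_i\tK_i\otimes F_i$ together with the $\pi$-signed super tensor rule. Applying $\Tw_{\lambda,\lambda'}$ to the two summands introduces the prefactors $\bt^{\cX(\zeta-i',\zeta')}$ and $\bt^{\cX(\zeta,\zeta'+i')}$, which by properties (1) and (2) of $\cX$ in the preceding lemma become $\bt^{\cX(\zeta,\zeta')-\phi(i,\zeta')}$ and $\bt^{\cX(\zeta,\zeta')+\phi(i,\zeta)+2d_i+\ang{\tilde{i},\zeta+\zeta'}+2p(\zeta)p(i)}$ respectively. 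Using $\Tw_\lambda(F_iw)=\Tw(F_i)\Tw_\lambda(w)$ and its ${}^\omega$-analogue, I would then compare with $\bt^{\cX(\zeta,\zeta')}\Delta(\Tw(F_i))(\Tw_\lambda(w)\otimes\Tw_{-\lambda'}(w'))$ as computed by \eqref{eq:deltwF}. The $E_i$ case is strictly parallel via \eqref{eq:deltwE}.

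The main obstacle will be the combinatorial bookkeeping of the $\Z/4\Z$-shifts and $\pi$-parity signs. Matching, for example, the second summand of the $F_i$-case comes down to the congruence
\[
\phi(i,\zeta)+2d_i+\ang{\tilde{i},\zeta+\zeta'}+2p(\zeta)p(i)\equiv_4 -d_i\ang{i,\zeta-\zeta'}-\phi(i,\zeta),
\]
which, using $\ang{\tilde{i},\cdot}=d_i\ang{i,\cdot}$, reduces to an even-integer identity ultimately equivalent to a mod-$2$ statement that follows from the enhancer axioms for $\phi$ together with the bar-consistency condition $d_i\equiv p(i)\pmod{2}$. Analogous congruences handle the first $F_i$-summand and both $E_i$-summands. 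Because the defining properties of $\cX$ and the axioms for $\phi$ were precisely designed to make these matches go through, the verification, though notation-heavy, is essentially mechanical once the setup is in place.
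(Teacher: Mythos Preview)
Your proposal is correct and follows exactly the same strategy as the paper: reduce to generators, dispose of the Cartan-type generators by weight considerations, and for $E_i$ and $F_i$ expand both sides using \eqref{eq:deltwE}, \eqref{eq:deltwF} together with the defining recurrences (1)--(2) of $\cX$. One small bookkeeping slip to fix when you write it out: in ${}^\omega V(\lambda')$ the vector $F_i\cdot w'$ has weight $\zeta'-i'$ (not $\zeta'+i'$), so the relevant prefactor is $\bt^{\cX(\zeta,\zeta'-i')}$ and you must apply property~(2) of $\cX$ with $\zeta'$ shifted down by $i'$, exactly as in the paper's displayed computation.
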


\begin{proof}
Let $w\otimes w'\in  N(\lambda,\lambda')$ with $|w|=\zeta$ and $|w'|=\zeta'$ for $\nu,\mu\in \N[I]$.
It is enough to check when $u$ is a generator. If $u$ is $K_\nu$, $J_\nu$, $T_\nu$, or $\Upsilon_\nu$ then
this is trivial, so it remains to check when $u=E_i$ or $u=F_i$.

Well, using \eqref{eq:deltwE}, we have
\begin{align*}
\Delta(\Tw(E_i))& \Tw_{\lambda,\lambda'}(w\otimes w')=
\bt^{\cX(\zeta,\zeta')}\Delta(\Tw(E_i))(\Tw_\lambda(w)\otimes \Tw_{-\lambda'}(w'))\\
&=\bt^{\phi(i,\zeta')+\cX(\zeta,\zeta')}\Tw_\lambda(E_iw)\otimes \Tw_{-\lambda'}(\tK_{-i}\cdot w')\\
&\hspace{2em} +\pi^{p(i)p(w)}\bt^{\cX(\zeta,\zeta')+2d_i+d_i\ang{i,\zeta+\zeta'}+\phi(i,\zeta)}
\Tw_\lambda(w)\otimes \Tw_{-\lambda'}(E_i\cdot w')\\
&=\bt^{\phi(i,\zeta')+\cX(\zeta,\zeta')-\cX(\zeta+i,\zeta')}\Tw_{\lambda,\lambda'}(E_iw\otimes \tK_{-i}\cdot w')\\
&\hspace{2em} +\pi^{p(i)p(w)}\bt^{\cX(\zeta,\zeta')-\cX(\zeta,\zeta'+i)+2d_i+d_i\ang{i,\zeta+\zeta'}+\phi(i,\zeta)}
\Tw_{\lambda,\lambda'}(w\otimes E_i\cdot w')\\
&=\Tw_{\lambda,\lambda'}(E_iw\otimes\tK_{-i}\cdot w') +\pi^{p(i)p(w)}\bt^{2p(i)p(\zeta)} \Tw_{\lambda,\lambda'}(w\otimes E_i\cdot w')\\
&=\Tw_{\lambda,\lambda'}(\Delta(E_i)w\otimes w').
\end{align*}
Similarly, using \eqref{eq:deltwF}, we have
\begin{align*}
\Delta(\Tw(F_i))& \Tw_{\lambda,\lambda'}(w\otimes w')=
\bt^{\cX(\zeta,\zeta')}\Delta(\Tw(F_i))(\Tw_\lambda(w)\otimes \Tw_{-\lambda'}(w'))\\
&=\bt^{-\phi(i,\zeta')+\cX(\zeta,\zeta')}\Tw_\lambda(F_iw)\otimes \Tw_{-\lambda'}( w')\\
&\hspace{2em} +\pi^{p(i)p(w)}\bt^{\cX(\zeta,\zeta')-d_i\ang{i,\zeta+\zeta'}-\phi(i,\zeta)}
\Tw_\lambda(\tJ_i\tK_i w)\otimes \Tw_{-\lambda'}(F_i\cdot w')\\
&=\bt^{-\phi(i,\zeta')+\cX(\zeta,\zeta')-\cX(\zeta-i',\zeta')}\Tw_{\lambda,\lambda'}(F_iw\otimes w')\\
&\hspace{2em} +\pi^{p(i)p(w)}\bt^{\cX(\zeta,\zeta')-\cX(\zeta,\zeta'-i')-d_i\ang{i,\zeta+\zeta'}-\phi(i,\zeta)}
\Tw_{\lambda,\lambda'}(\tJ_i\tK_i w \otimes F_i\cdot w')\\
&=\Tw_{\lambda,\lambda'}(\Delta(F_i)w\otimes w')
\end{align*}
\end{proof}

Then in particular we have the following result.

\begin{theorem}\label{thm:twCB} Let $b,b'\in \cB$ and $\lambda,\lambda'\in X^+$. Set $\zeta=\lambda-\lambda'$.
\begin{enumerate}
\item We have 
$\Tw_{\lambda,\lambda'}((b\diamondsuit b')_{\lambda,\lambda'})=\bt^{f(b,b',\zeta)}(b\diamondsuit b')_{\lambda,\lambda'}$
for some $f(b,b',\zeta)\in \Z$.
\item  We have
$\Tw(b\diamondsuit_\zeta b')=\bt^{f(b,b',\zeta)}(b\diamondsuit_\zeta b')$.
\end{enumerate}
\end{theorem}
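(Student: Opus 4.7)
The plan is to verify that $\Tw_{\lambda,\lambda'}((b\diamondsuit b')_{\lambda,\lambda'})$ satisfies (up to a power of $\bt$) the defining properties of $(b\diamondsuit b')_{\lambda,\lambda'}$ given in Theorem \ref{thm:CBforN}, and to conclude (1) by the uniqueness statement there. This parallels the argument used in \cite[Theorem 3.6]{CFLW} for the canonical basis $\cB$ of $\ff$. For the leading term, I combine $\Tw(b) = \bt^{\ell(b)} b$ from Proposition \ref{prop:halfquantumtwist} with the formulas \eqref{eq:twist vs ff-embeddings} and the definitions of $\Tw_\lambda$, $\Tw_{-\lambda'}$ (compare Lemma \ref{lem:tw and omega CB}) to compute that $\Tw_\lambda(b^-\eta_\lambda)$ is an explicit $\bt$-power of $b^-\eta_\lambda$, and analogously for $\Tw_{-\lambda'}(b'^+\xi_{-\lambda'})$. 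Incorporating the $\bt^{\cX}$-factor from Proposition \ref{prop:Ntwistor} gives
\[
\Tw_{\lambda,\lambda'}\bigl(b^-\eta_\lambda \otimes b'^+\xi_{-\lambda'}\bigr)
= \bt^{f(b,b',\zeta)}\, b^-\eta_\lambda \otimes b'^+\xi_{-\lambda'}
\]
for an integer $f(b,b',\zeta)$. The fact that the exponent depends only on $b,b',\zeta$ (not on the specific $\lambda,\lambda'$) is a cocycle-type check using the defining properties of the enhancer $\phi$ from Definition \ref{definition:enhancer}.

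Next I establish that $\Tw_{\lambda,\lambda'}$ commutes with $\Psi = \Theta\circ\barmap$ (perhaps up to a sign). Since the tensor-product bar involution $\barmap\otimes\barmap$ on $V(\lambda)[\bt]\otimes {}^\omega V(\lambda')[\bt]$ is determined by its action on the highest/lowest weight vectors and the action of the bar involution on $\UU$, I check on generators that $\Tw$ and $\barmap$ interact predictably (each of $E_i,F_i,K_\mu,J_\mu$ is sent to a $\bt$-multiple of itself through either composition). For $\Theta$, I invoke its uniqueness: the element $(\Tw\otimes\Tw)(\Theta)$ satisfies a transported intertwining relation $\Delta(\Tw(u))\cdot(\Tw\otimes\Tw)(\Theta) = (\Tw\otimes\Tw)(\Theta)\cdot \overline{\Delta(\Tw(u))}$ in a completion of $\hatU\otimes\hatU$, and its constant term is $1\otimes 1$; by Proposition \ref{prop:otherRmats} this forces $(\Tw\otimes\Tw)(\Theta) = \Theta$ (up to a known scalar). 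Combined with the first observation, this yields the required commutation of $\Tw_{\lambda,\lambda'}$ with $\Psi$. The $\bt$-scalar book-keeping here is the main technical obstacle.

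Granting compatibility, $\bt^{-f(b,b',\zeta)}\Tw_{\lambda,\lambda'}((b\diamondsuit b')_{\lambda,\lambda'})$ is $\Psi$-invariant, lies in $\cL$ (since the leading-term analysis extends $\pi$-linearly to show $\Tw_\lambda$ and $\Tw_{-\lambda'}$ stabilize the crystal lattices), and reduces to $b^-\eta_\lambda \otimes b'^+\xi_{-\lambda'}$ modulo $v\cL$; the uniqueness half of Theorem \ref{thm:CBforN} then gives (1). For (2), apply $\Tw_{\lambda,\lambda'}$ to both sides of $\Delta(b\diamondsuit_\zeta b')(\eta_\lambda\otimes\xi_{-\lambda'}) = (b\diamondsuit b')_{\lambda,\lambda'}$. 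By Proposition \ref{prop:Ntwistor} and $\cX(\lambda,-\lambda')\equiv_4 0$, which gives $\Tw_{\lambda,\lambda'}(\eta_\lambda\otimes \xi_{-\lambda'}) = \eta_\lambda\otimes\xi_{-\lambda'}$, the left side becomes $\Delta(\Tw(b\diamondsuit_\zeta b'))(\eta_\lambda\otimes\xi_{-\lambda'})$, while the right side is $\bt^{f(b,b',\zeta)}(b\diamondsuit b')_{\lambda,\lambda'}$ by (1). Hence $\Tw(b\diamondsuit_\zeta b') - \bt^{f(b,b',\zeta)}(b\diamondsuit_\zeta b')$ acts as zero on $\eta_\lambda\otimes\xi_{-\lambda'}$ for arbitrarily large admissible $\lambda,\lambda'$; by Proposition \ref{prop:dotU onto N}, mimicking the argument in the proof of Theorem \ref{thm:cbdotU}, this forces the difference to vanish.
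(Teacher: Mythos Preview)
Your overall strategy---compute the leading term using Propositions \ref{prop:halfquantumtwist}, \ref{prop:Ntwistor} and \eqref{eq:twist vs ff-embeddings}, establish $\Psi$-compatibility, and invoke uniqueness from Theorem \ref{thm:CBforN}---is sound and matches the paper's approach in outline. Two points deserve comment, however.

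First, your route to $\Psi$-compatibility through $(\Tw\otimes\Tw)(\Theta)=\Theta$ is more delicate than necessary: the uniqueness of $\Theta$ in Proposition \ref{prop:otherRmats} is formulated inside $\UU\otimes\UU$, not $\hatU\otimes\hatU$, so transporting the intertwining relation requires extra care with the $T_\mu$, $\Upsilon_\mu$ factors. The paper bypasses this entirely. Writing $(b\diamondsuit b')_{\lambda,\lambda'}=u(\eta_\lambda\otimes\xi_{-\lambda'})$ with $u=b\diamondsuit_\zeta b'$ bar-invariant (Theorem \ref{thm:cbdotU}(3)), Proposition \ref{prop:Ntwistor} gives $\Tw_{\lambda,\lambda'}((b\diamondsuit b')_{\lambda,\lambda'})=\Delta(\Tw(u))(\eta_\lambda\otimes\xi_{-\lambda'})$; since $\Tw$ commutes with bar on $\dotU[\bt]$ (a generator check, using $-\bt=\bt^{-1}$) and $\Psi$ intertwines with bar, $\Psi$-invariance is immediate. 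This avoids touching $\Theta$ directly.

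Second, and more substantively, the paper does \emph{not} verify that $f$ depends only on $\zeta$ by a cocycle check with the enhancer $\phi$. Instead it uses the stability maps $t:N(\lambda+\lambda'',\lambda''+\lambda')\to N(\lambda,\lambda')$ from \S4.4: since $t$ is $\UU$-linear, sends $\eta_{\lambda+\lambda''}\otimes\xi_{-\lambda''-\lambda'}$ to $\eta_\lambda\otimes\xi_{-\lambda'}$, and carries $(b\diamondsuit b')_{\lambda+\lambda'',\lambda''+\lambda'}$ to $(b\diamondsuit b')_{\lambda,\lambda'}$, one computes $\Delta(\Tw(u))(\eta_\lambda\otimes\xi_{-\lambda'})=t\bigl(\Delta(\Tw(u))(\eta_{\lambda+\lambda''}\otimes\xi_{-\lambda''-\lambda'})\bigr)$ and reads off $f(b,b',\lambda,\lambda')\equiv_4 f(b,b',\lambda+\lambda'',\lambda''+\lambda')$. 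This is both shorter and more robust than tracking the explicit exponent through $\cX$, $\phi$, $\ell(b)$, $\ell(b')$; your cocycle approach would work in principle but requires writing out $f$ explicitly and verifying several congruences that the paper never needs. For part (2) your argument via Proposition \ref{prop:dotU onto N} is equivalent to the paper's appeal to the uniqueness in Theorem \ref{thm:cbdotU}.
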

\begin{proof}
For (1), first note that the claim $\Tw_{\lambda,\lambda'}((b\diamondsuit b')_{\lambda,\lambda'})=\bt^{f(b,b',\lambda,\lambda')}(b\diamondsuit b')_{\lambda,\lambda'}$ for some $f(b,b',\lambda,\lambda')\in \Z$ follows by combining Proposition \ref{prop:Ntwistor}, Proposition \ref{prop:halfquantumtwist} and \eqref{eq:twist vs ff-embeddings}.
Let $u=b\diamondsuit_\zeta b'$.
Then $\Tw(u(\eta_\lambda\otimes \xi_{-\lambda'}) )=\bt^{f(b,b',\lambda,\lambda')}(b\diamondsuit b')_{\lambda,\lambda'}$;
on the other hand, $\Tw(u(\eta_\lambda\otimes \xi_{-\lambda'}) )=\Delta(\Tw(u))(\eta_\lambda\otimes \xi_{-\lambda'})$.
Therefore, we see that 
\[\Delta(\Tw(u))(\eta_\lambda\otimes \xi_{-\lambda'})=\bt^{f(b,b',\lambda,\lambda')}(b\diamondsuit b')_{\lambda,\lambda'}\]
Let $\lambda''\in X^+$. Then
\begin{align*}
\Delta(\Tw(u))(\eta_\lambda\otimes \xi_{-\lambda'})&=\Delta(\Tw(u))t(\eta_{\lambda+\lambda''}\otimes \xi_{-\lambda''-\lambda'})\\
&=t(\Delta(\Tw(u))\eta_{\lambda+\lambda''}\otimes \xi_{-\lambda''-\lambda'})\\
&=t(\bt^{f(b,b',\lambda+\lambda'',\lambda''+\lambda')}(b\diamondsuit b')_{\lambda+\lambda'',\lambda''+\lambda'})\\
&=\bt^{f(b,b',\lambda+\lambda'',\lambda''+\lambda')}(b\diamondsuit b')_{\lambda,\lambda'}
\end{align*}
In particular, we see that $f(b,b',\lambda,\lambda')\equiv_4 f(b,b',\lambda+\lambda'',\lambda''+\lambda')$,
so $\bt^{f(b,b',\lambda,\lambda')}$ is determined by $b$, $b'$, and $\zeta=\lambda-\lambda'$, which finishes the proof of (1).

In particular, setting $f(b,b',\zeta)=f(b,b',\lambda,\lambda')$,
\[\Delta(\Tw(u))(\eta_{\lambda_0}\otimes \xi_{-\lambda_0'})=\bt^{f(b,b',\zeta)}(b\diamondsuit b')_{\lambda_0,\lambda'_0}\]
for all $\lambda_0,\lambda_0'\in X^+$ with $\zeta=\lambda_0-\lambda_0'$, so (2) follows by uniqueness.
\end{proof}


\begin{thebibliography}{CHW2}\frenchspacing


\bibitem[BLM]{BLM} A.A. Beilinson, G. Lusztig, R. McPherson, 
{\em A geometric setting for the quantum deformation of $\mathrm{GL}_n$}, Duke
Math. J. {\bf 61} (1990), 655-677.

\bibitem[CFLW]{CFLW} S.~Clark, Z.~Fan, Y.~Li and W.~Wang, {\em Quantum supergroups III. Twistors}, accepted in Comm. Math. Phys., arXiv:1307.7056.

\bibitem[CHW1]{CHW1} S.~Clark, D.~Hill and W.~Wang, {\em Quantum supergroups I. Foundations}, Transform. Groups {\bf 18} (2013), 1019–-1053.

\bibitem[CHW2]{CHW2} S.~Clark, D.~Hill and W.~Wang, {\em Quantum supergroups II. Canonical basis},  arXiv:1304.7837.

\bibitem[CHW3]{CHW3} S.~Clark, D.~Hill and W.~Wang, 
{\em Quantum shuffles and quantum supergroups of basic type},  arXiv:1310.7523.

\bibitem[CW]{CW} S. Clark and W. Wang, 
{\em  Canonical basis for quantum $\mathfrak{osp}(1|2)$}, Lett. Math. Phys. \textbf{103} (2013),  207--231.

\bibitem[ETK]{ETK} H.~El Turkey, J.~Kujawa,
{\em Presenting Schur superalgebras}, Pacific Jour. Math. (to appear), arXiv:1209.6327

\bibitem[EKL]{EKL} A.~Ellis, M.~Khovanov and A.~Lauda,
{\em The odd nilHecke algebra and its diagrammatics}, IMRN, 2013,
arXiv:1111.1320.

\bibitem[EL]{EL} A.~Ellis and A.~Lauda,
{\em An odd categorification of $U_q(\mathfrak{sl}_2)$}, arXiv:1307.7816v2

\bibitem[FL]{FL} Z. Fan and Y. Li, {\em A geometric setting for quantum $\mathfrak{osp}(1|2)$}, Trans. Amer. Math. Soc. (to appear), arXiv:1305.0710.

\bibitem[KKO]{KKO} S.-J. Kang, M. Kashiwara and S.-J. Oh, {\em Supercategorification of quantum Kac-Moody algebras II},
arXiv:1303.1916.

\bibitem[KKT]{KKT} S.-J.~Kang, M.~Kashiwara and S.~Tsuchioka,
{\em Quiver Hecke superalgebras}, arXiv:1107.1039.

\bibitem[HW]{HW}
D.~Hill and W.~Wang, \emph{{Categorification of quantum Kac-Moody
  superalgebras}}, Trans. Amer. Math.
Soc. (to appear), arXiv:1202.2769v2.

\bibitem[K91]
{K91} M.~ Kashiwara,
{\em On crystal bases of the $Q$-analogue of universal enveloping
algebras}, Duke Math.~J.~{\bf 63} (1991), 456--516.

\bibitem[K93]
{K93} M.~ Kashiwara,
{\em Global crystal bases of quantum groups}, Duke Math. J. {\bf 69}(2), 455-–485,
1993.

\bibitem[KL]{KL} M. Khovanov and A. Lauda,
{\em A diagrammatic approach to categorification of quantum groups III}, Quantum Topology \textbf{1} (2010), 1--92.


\bibitem[L90]
{L90} G.~ Lusztig, {\em Canonical bases arising from
quantized enveloping algebras}, J.~Amer.~Math.~Soc.~{\bf 3} (1990),
447--498.

\bibitem[L92]{L92} G. Lusztig, 
{\em Canonical bases in tensor products}, Proc. Nat. Acad. Sci. U.S.A. {\textbf 89} (1992), no. 17, 8177--8179.

\bibitem[L93]{L93} G. Lusztig,
        {\em Introduction to Quantum Groups},
        Progress in Math. \textbf{110}, Birkh\"{a}user {1993}.
        
\bibitem[MW]{MW} V.~Mikhaylov and E.~Witten, {\em Branes and supergroups}, to appear.

\bibitem[W]{W} W.~ Wang,
\textit{Double affine Hecke algebras for the spin symmetric group},
Math. Res. Lett. {\bf 16} (2009), 1071--1085.

\bibitem[Web13]{Web13} B.~Webster, {\em Knot invariants and higher representation theory I},
arxiv:1309.3796.

\end{thebibliography}
\end{document}